\newcolumntype{L}{>{$}l<{$}} 
\newcolumntype{R}{>{$}r<{$}} 
\newcolumntype{C}{>{$}c<{$}}
\newcommand{\Q}{\mathbb Q}
\newcommand{\Z}{\mathbb Z}
\newcommand{\N}{\mathbb N}
\renewcommand{\phi}{\varphi}
\newcommand{\eps}{\varepsilon}
\newcommand{\bmx}{\left( \begin{matrix}}
\newcommand{\emx}{\end{matrix} \right)}
\newcommand{\new}{\mathrm{new}}
\renewcommand{\mod}{\bmod}
\newcommand{\sqf}{\mathrm{sqf}}
\newcommand{\bigdot}{\boldsymbol{\cdot}}
\newcommand{\leg}{\overwithdelims ()}
\DeclareMathOperator{\PGL}{PGL}
\DeclareMathOperator{\tr}{tr} 
\DeclareMathOperator{\lcm}{lcm}
\newtheorem{lem}{Lemma}
\numberwithin{lem}{section}
\newtheorem{prop}[lem]{Proposition}
\newtheorem{thm}[lem]{Theorem}
\crefname{thm}{Theorem}{Theorems}
\newtheorem{cor}[lem]{Corollary}
\newtheorem{conj}[lem]{Conjecture}
\crefname{conj}{Conjecture}{Conjectures}
\crefname{question}{Question}{Questions}
\theoremstyle{remark}
\newtheorem{rem}[lem]{Remark}
\theoremstyle{definition}
\numberwithin{equation}{section}
\begin{document}

\title{Distribution of local signs of modular forms and murmurations of Fourier coefficients}
\author{Kimball Martin}
\address{Department of Mathematics, University of Oklahoma, Norman, OK 73019 USA}
\email{kimball.martin@ou.edu}

\address{Department of Mathematics $\cdot$ International Research and Education Center, Graduate School of Science, Osaka Metropolitan University, Osaka 558-8585, Japan}
\email{kimball@omu.ac.jp}


\maketitle

Corrected version (\today)\footnote{After this paper appeared in \emph{Mathematika}\ (2025), I discovered minor mathematical misprints in Section 4.2.  This version corrects those errors.  (See footnote in Section 4.2 for changes.)}

\begin{abstract}
Recently, we showed that global root numbers of modular forms are biased toward $+1$.  Together with Pharis, we also showed an initial bias of Fourier coefficients towards the sign of the root number.  First, we prove analogous results with respect to local root numbers.

Second, a subtle correlation between Fourier coefficients and global root numbers, termed murmurations, was recently discovered for elliptic curves and modular forms.  We conjecture murmurations in a more general context of different (possibly empty) combinations of local root numbers.

Last, an appendix corrects a sign error in our joint paper with Pharis.
\end{abstract}


\section{Introduction}


Here we study the traces of Atkin--Lehner operators on spaces of newforms $S_k^\new(N) = S_k^\new(\Gamma_0(N))$.  There are two main reasons we are interested in this: (1) to understand distributions of local root numbers of newforms; (2) to explore correlation of Fourier coefficients of newforms with respect to local root numbers, and in particular explore variations on recently discovered murmuration phenomena.  These questions are local analogues of recent discoveries about global root numbers.

\subsection{Distributions of local root numbers}

In \cite{me:refdim,me:rootno}, we observed a bias of newforms towards global root number $+1$, even though asymptotically these account for 50\% of newforms.  Namely, for fixed pair $(k, N)$, outside of a prescribed set of exceptions, there are strictly more newforms in $S^\new_k(N)$ with root number $+1$ than $-1$.  Moreover, the excess number of forms with root number $+1$ is essentially independent of $k$, and is typically an elementary factor times the class number of $\Q(\sqrt{-N})$.

In fact \cite{me:refdim} was primarily concerned with the distribution of local root numbers in $S_k^\new(N)$.  Suppose $N$ is squarefree, and $q_1, \dots, q_m$ are primes dividing $N$.  We obtained a criterion for when the local root numbers (i.e., Atkin--Lehner eigenvalues) at $q_1, \dots, q_m$  are perfectly equidistributed in $S_k^\new(N)$, i.e., the number of newforms with prescribed local signs at $q_1, \dots, q_m$ does not depend on the choice of signs.  We also showed that there is a bias towards/away all local signs being $-1$, with the direction of the bias depending on the parities of $\frac k2$ and the number of prime divisors of $N$.

The motivation for studying the distribution of local root numbers in \cite{me:refdim} was for applications to congruences mod 2.  Suppose further that $N$ is a squarefree product of an odd number of primes.  In \cite{me:cong2}, we showed that, apart from levels of the form $N = 2p_1 p_2$ when $k=2$, if the local Atkin--Lehner signs are perfectly equidistributed for $q_1, \dots, q_m$, then for any newform $f \in S_k(N)$ and any prescribed choice of local signs at $q_1, \dots, q_m$, there is a newform $g \in S_k(N)$ with those prescribed signs which is congruent to $f$ mod $2$.  Further when $k=2$, where there is a bias towards all local signs $-1$, one can do the same (without perfect equidistribution) when one prescribes the local signs of $g$ to all be $-1$. 

The approach in \cite{me:refdim,me:rootno} is via explicit trace formulas.  However for general levels $N$ and arbitrary collections of prime divisors $q_1, \dots, q_m$ of $N$, the trace formula is rather complicated, making a complete generalization of \cite{me:refdim} difficult.  Our first goal here is to study the distribution of the local root number at a single prime $q$ for general levels $N$.  It would be interesting to see if there are similar applications to mod 2 congruences as in \cite{me:cong2} for general $N$, but we do not pursue this here.  (However, see \cref{prop:quadtwist} for when quadratic twisting implies an analogous mod 2 congruence result.)

Let $q$ be a prime, $r \ge 1$, and $M \ge 1$ such that $q \nmid M$.  Denote by  
$S_k^\new(q^rM)^{\pm_q}$ the subspace of $S_k^\new(q^r M)$ which is the $\pm$-eigenspace of the Atkin--Lehner operator $W_q$ at $q$.
Define
\[ \Delta_k(q^r, M) =  \dim S_k^\new(q^rM)^{+_q} - \dim S_k^\new(q^rM)^{-_q} .\]
In other words, $\Delta_k(q^r, M)$ is the trace of $W_q$ on $S_k^\new(q^r M)$.
In \cref{sec:dim} we obtain very explicit formulas for $\Delta_k(q^r, M)$ which are of the form
\begin{equation} \label{eq:Deltak-form}
\Delta_k(q^r, M) = 
\begin{cases}
C_1 h_{\Q(\sqrt{-q})} + \delta_{r=1}  D_1 & \text{if } r \text{ is odd}, \\
C_2 +  \delta_{r=2} (k-1) D_2  & \text{if } r \text{ is even}.
\end{cases}
\end{equation}
Here $\delta_*$ is the Kronecker delta, and $C_i$ and $D_i$ are elementary functions which depend on $q$ and $r$, 
depend on the prime factorization of $M$ (i.e., are expressible in terms of multiplicative functions of $M$), and only depend in a mild way on $k$.  In general, these functions depend on $k \mod 24$, and whether $k=2$, but in most cases only involve a factor of $(-1)^\frac k2$.
The explicit form of these functions breaks up into various cases, but for instance when $q \equiv 1 \mod 4$, $r=1$, and $M$ is odd, we have
\[ \Delta_k(q,M) = \frac 12 (-1)^{\frac k2} \kappa_{-q}(M) h_{\Q(\sqrt{-q})}  + \delta_{k=2} \mu(M), \]
where $\mu$ is the M\"obius function, and $\kappa_{-q}$ is the multiplicative function defined by \eqref{eq:kappa-def}.

As a consequence, outside of certain exceptional cases, we characterize when $\Delta_k(q^r,M) = 0$, and specify the sign of $\Delta_k(q^r, M)$ when it is nonzero.  Let $\omega(M)$ be the number of primes dividing $M$, $\omega_1(M)$ the number of primes sharply dividing $M$, and $\omega_2(n; M)$ the number of $p^2 \parallel M$ such that ${n \leg p} = 1$.

\begin{thm} [odd exponent] \label{thm11}
Let $q$ be a prime, $M \ge 1$ be coprime to $q$ and write $M = 2^e M'$ where $M'$ is odd.  Let $r \ge 1$ be an odd integer.  If $r \le 3$ assume $q \ge 5$,
and if $r=1$ further assume that $k \ge 4$ or $M$ is not squarefree.  

Then $\Delta_k(q^r, M) = 0$
if and only if 
(i) $M'$ is not cubefree; (ii) ${-q \leg p} = 1$ for some $p \parallel M'$; (iii) $e \ge 5$;
(iv) $e = 4$ and $q \equiv 1 \mod 4$; or
(v) $e = 1, 2, 3$ and $q \equiv 7 \mod 8$.

Moreover, when $\Delta_k(q^r, M) \ne 0$, its sign is $(-1)^{k/2 + \omega_1(M') + \omega_2(-q, M')} b_{q,e}$, where $b_{q,e}$ is the sign of the quantity $\alpha_1(-q; e)$ in \cref{tab:alpha1}.  I.e., we may take $b_{q,e} = +1$ if $e=0$; $-1$ if $e=1,2$; ${-1 \leg q}$ if $e=3$; and $-{2 \leg q}$ if $e=4$.  Hence this sign is simply $(-1)^{k/2 + \omega(M)}$ when $M$ is squarefree.
\end{thm}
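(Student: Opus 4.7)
The plan is to derive the theorem from the explicit formulas for $\Delta_k(q^r, M)$ established in \cref{sec:dim}. For odd $r$, \eqref{eq:Deltak-form} gives
\[ \Delta_k(q^r, M) = C_1 h_{\Q(\sqrt{-q})} + \delta_{r=1} D_1, \]
where $C_1 = C_1(q^r, M)$ is an explicit multiplicative function of $M$, hence factors as a product of local contributions at each prime power $p^a \parallel M$. Since $h_{\Q(\sqrt{-q})}$ is a positive integer, the leading term $C_1 h_{\Q(\sqrt{-q})}$ vanishes iff some local factor of $C_1$ vanishes, and its sign is the product of the signs of the local factors with any overall sign produced by the formula.

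The first step is to record, factor by factor, when the local contribution vanishes and what its sign is. For an odd prime $p \ne q$ with $p^a \parallel M'$, inspection of the formulas of \cref{sec:dim} should yield: (a) if $a = 1$, the factor vanishes iff ${-q \leg p} = 1$, and otherwise has sign $-1$; (b) if $a = 2$, the factor is always nonzero, with sign $-1$ iff ${-q^r \leg p} = 1$; (c) if $a \ge 3$, the factor vanishes. For $p = 2$ with $2^e \parallel M$, the $2$-adic factor is $\alpha_1(-q^r; e)$ from \cref{tab:alpha1}, which vanishes precisely when $e \ge 5$, or $e = 4$ and $q \equiv 1 \mod 4$, or $1 \le e \le 3$ and $q \equiv 7 \mod 8$. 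Combining these cases over the prime factorization of $M$ produces exactly the list (i)--(v).

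For the sign of $\Delta_k(q^r, M)$ when it is nonzero, one multiplies the local signs: each $p \parallel M'$ contributes $-1$ (yielding $(-1)^{\omega_1(M')}$), each $p^2 \parallel M'$ with ${-q^r \leg p} = 1$ contributes $-1$ (yielding $(-1)^{\omega_2(-q^r, M')}$), the explicit formula for $C_1$ carries an overall $(-1)^{k/2}$, and the $2$-adic factor contributes $b_{r,e,q} = \operatorname{sgn} \alpha_1(-q^r; e)$. This assembles to the stated sign $(-1)^{k/2 + \omega_1(M') + \omega_2(-q^r, M')} b_{r,e,q}$. In the squarefree case one has $e \in \{0, 1\}$, $\omega_2(-q^r, M') = 0$, and $\omega_1(M') = \omega(M')$; since $b_{r,0,q} = 1$ and $b_{r,1,q} = -1$, the sign collapses to $(-1)^{k/2 + \omega(M)}$.

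The main obstacle is controlling the correction term $\delta_{r=1} D_1$ when $r = 1$. The hypotheses $q \ge 5$ for $r \le 3$ and ``$k \ge 4$ or $M$ not squarefree'' when $r = 1$ are precisely calibrated so that $D_1$ cannot overturn the vanishing or sign conclusions drawn from the leading term: outside these restrictions, $D_1$ either vanishes identically (for instance when $k \ge 4$) or contributes with the same sign as the leading term, whereas in the excluded case $k = 2$, $r = 1$, $M$ squarefree the $\mu(M)$-type summand in $D_1$ can in fact cancel the main term, which is exactly why this case must be set aside. Verifying this compatibility case by case---stratified by $q \bmod 24$, by $e$, and by the parity conditions built into the $C_i$ and $D_i$---is the most technical and bookkeeping-heavy step, but is a direct computation once the explicit formulas of \cref{sec:dim} are in hand.
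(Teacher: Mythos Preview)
Your approach matches the paper's: factor $\Delta_k(q^r,M)$ as $(-1)^{k/2}$ times a $2$-adic piece times $\kappa_{-q}(M')$, then read off vanishing and sign locally. Two points where your write-up is looser than the actual argument:

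First, for $r\ge 3$ the $2$-adic factor is \emph{not} $\alpha_1(-q^r;e)$ itself but the alternating combination
\[
\aleph \;=\; \alpha_1(-q^r;e) - 2\alpha_1(-q^{r-2};e) + \delta_{r\ge 5}\,\alpha_1(-q^{r-4};e),
\]
and one must check that $\aleph$ has the same vanishing set and sign as $\alpha_1(-q^r;e)$. The paper does this by writing $\aleph = c_1\bigl(\sigma(q^{(r-1)/2}) - 2\sigma(q^{(r-3)/2}) + \delta_{r\ge 5}\sigma(q^{(r-5)/2})\bigr)h'(\Delta_0)$ and observing that the $\sigma$-combination is strictly positive. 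Without this step your claim that ``the $2$-adic factor is $\alpha_1(-q^r;e)$ from \cref{tab:alpha1}'' is not justified for $r\ge 3$.

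Second, your treatment of the $D_1$ term is overcomplicated. For $q\ge 5$ the correction is exactly $\delta_{k=2}\,\mu(M)$, and the hypothesis ``$k\ge 4$ or $M$ not squarefree'' kills it outright: either $\delta_{k=2}=0$ or $\mu(M)=0$. There is no residual ``same sign'' case and no stratification by $q\bmod 24$ is needed; that modulus only enters for the small primes $q=2,3$ excluded by hypothesis.
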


\begin{thm}  [even exponent $\ge 4$] \label{thm12}
Let $q$ be a prime, $r \ge 4$ even, and $M  \ge 1$ coprime to $q$.  Assume $q^r \ne 16$ and write $M = 2^e M'$ where $M'$ is odd. 

Then $\Delta_k(q^r, M) = 0$ if and only if (i) $M'$ is not cubefree, (ii) $16 \mid M$ or (iii) $p \equiv 1 \mod 4$ for some $p \parallel M'$.

If $\Delta_k(q^r, M) \ne 0$, then its sign is $(-1)^{\frac k2 + \omega_1(M') + \omega_2(-1;M')} b_{2,e}$, where $b_{2,e} = 1$ if $e = 0, 3$ and $b_{2,e} = -1$ if $e = 1, 2$.  In particular, this sign is $(-1)^{\frac k2 + \omega(M)}$ if $M$ is squarefree.
\end{thm}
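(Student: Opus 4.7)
Since $r$ is even with $r \ge 4$, the Kronecker delta $\delta_{r=2}$ in \eqref{eq:Deltak-form} vanishes, so $\Delta_k(q^r, M) = C_2$, where $C_2$ is the elementary function computed in \cref{sec:dim}.  My plan is to unpack $C_2$ as a product of local factors---one from the prime $q$, one from the prime $2$ (encoded by $e$), and one from each odd prime dividing $M'$---and then read off the vanishing locus and the sign from each factor.  Concretely, writing $M' = \prod p^{e_p}$, I expect the formula to take the shape
\[
C_2 = (-1)^{k/2}\, \alpha_q(q^r)\, \alpha_2(-q^r; e)\, \prod_{p \mid M'} \beta_p(e_p),
\]
where $\alpha_q(q^r) > 0$ depends only on $q$ and $r$ (the analog, in the even case, of the class-number factor), $\alpha_2(-q^r; e)$ is the local factor at $2$, and $\beta_p(e_p)$ is the local factor at each odd $p \mid M'$.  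The hypothesis $q^r \ne 16$ is what ensures the $q$-part is generic and does not conflict with the $2$-part of the factorization.

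The vanishing analysis then splits into three independent local checks.  First, $\beta_p(e_p) = 0$ whenever $e_p \ge 3$, because at depth $\ge 3$ at a prime not dividing $q^r$ the newform Atkin--Lehner trace cancels; this is condition (i).  Second, $\alpha_2(-q^r; e)$ vanishes precisely when $e \ge 4$, giving condition (ii).  (The $e \ge 4$ threshold is stricter than the $e \ge 5$ one appearing in the odd-$r$ case of \cref{thm11}, reflecting the fact that $-q^r \equiv -1 \bmod 4$ behaves essentially like a fundamental discriminant situation.)  Third, $\beta_p(1)$ carries a factor $1 - {-q^r \leg p} = 1 - {-1 \leg p}$ (since $r$ is even), which vanishes iff $p \equiv 1 \bmod 4$; this is (iii).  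Direct inspection of $\beta_p(2)$ and of $\alpha_2(-q^r; e)$ for $e \le 3$ then confirms that no other vanishings occur.

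For the sign when $\Delta_k(q^r, M) \ne 0$, one multiplies: the explicit $(-1)^{k/2}$; the positive $\alpha_q(q^r)$; the factor $b_{2,e}$ from $\alpha_2(-q^r; e)$ (by definition of $b_{2,e}$); the contribution from each $p \parallel M'$ (necessarily $p \equiv 3 \bmod 4$, each contributing $-1$, for a total $(-1)^{\omega_1(M')}$); and the contribution from each $p^2 \parallel M'$, where $\beta_p(2)$ is positive when ${-1 \leg p} = -1$ and negative when ${-1 \leg p} = +1$, yielding $(-1)^{\omega_2(-1; M')}$ overall by the definition of $\omega_2(-1; M')$.  Multiplying gives the claimed sign $(-1)^{k/2 + \omega_1(M') + \omega_2(-1;M')} b_{2,e}$.

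The main obstacle is to verify the precise factored form of $C_2$ from the explicit formulas in \cref{sec:dim} and to align every local sign at $2$ and at odd prime squares; in particular, one must check that the $e = 4$ degeneration at the prime $2$ really kicks in once $r \ge 4$ is even (whereas the odd-$r$ case in \cref{thm11} still tolerated $e = 4$ for $q \not\equiv 1 \bmod 4$), and that excluding $q^r = 16$ is exactly the hypothesis needed for the generic factorization to apply uniformly.
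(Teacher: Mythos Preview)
Your plan is correct and matches the paper's proof: the paper derives the explicit formula $\Delta_k(q^r,M) = \tfrac12 (-1)^{k/2}\,\kappa_{-1}(M')\,\aleph$ with $\aleph = \alpha_1(-q^r;e) - 2\alpha_1(-q^{r-2};e) + \alpha_1(-q^{r-4};e)$, then computes $\aleph$ explicitly (it is $b_{2,e}$ times a positive elementary function of $q,r$ for $e\le 3$, and $0$ for $e\ge 4$), which is exactly the factorization you anticipate, with your $\prod_p \beta_p(e_p) = \kappa_{-1}(M')$ and your $\alpha_q\cdot\alpha_2 = \tfrac12\aleph$.  The ``main obstacle'' you flag is thus just the direct evaluation of $\aleph$ from \cref{tab1,tab:alpha1} and \eqref{eq:Hlambda2}, after which the vanishing and sign analysis proceeds precisely as you describe.
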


See \cref{sec:dim} for analogous results for other cases (e.g., when $q^r$ is small, or when $r=1$, $k=2$ and $M$ is squarefree).  We remark that sometimes $\Delta_k(q^r, M) = 0$ is forced upon us by the action of quadratic twists---see \cref{prop:quadtwist}---but quadratic twisting does not suffice to explain most cases of perfect equidistribution of local root numbers.

Due to the $\delta_{r=2} (k-1) D_2$ term in \eqref{eq:Deltak-form}, the behavior is different when $r=2$.  In this case we describe the asymptotic behavior, which in the following setting asserts a bias towards local root number $-1$.

\begin{prop} [exponent 2] \label{prop13} 
Fix a prime $q$, and consider $k + M \to \infty$ such that $k \ge 2$ is even and $M \ge 1$ is coprime to $q$.  Then $\Delta_k(q^2, M) \to -\infty$.  More precisely $\Delta_k(q^2, M) \sim \frac{1-k}{12} \kappa_\infty(M)$, where $\kappa_\infty$ is the multiplicative function defined by \eqref{eq:kapinfty}.
\end{prop}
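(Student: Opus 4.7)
The plan is to apply the explicit formula for $\Delta_k(q^2, M)$ derived in \cref{sec:dim}, which by \eqref{eq:Deltak-form} takes the form
\[
\Delta_k(q^2, M) = C_2 + (k-1) D_2,
\]
with $C_2$ and $D_2$ elementary functions of $q$ and $M$ depending only mildly on $k$ (through $k \bmod 24$). The strategy is to identify $(k-1) D_2$ as the dominant term asymptotically and to control $C_2$ as a lower-order error.

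The first step is to identify $D_2$ explicitly. The factor $(k-1)$ in the Atkin--Lehner trace on $S_k^\new(q^2 M)$ arises only from the parabolic/identity contribution of the Eichler--Selberg trace formula, which is a multiplicative function of $M$ (times a fixed local factor at $q$, which the definition \eqref{eq:kapinfty} of $\kappa_\infty$ absorbs). Reading this off from the explicit formula in \cref{sec:dim}, I expect to obtain $D_2 = -\tfrac{1}{12}\kappa_\infty(M)$, so that $(k-1) D_2 = \tfrac{1-k}{12} \kappa_\infty(M)$.

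The second step is to bound $C_2$. Its dependence on $k$ is bounded (via $k \bmod 24$), and its dependence on $M$ is through multiplicative functions coming from the elliptic and hyperbolic contributions, which grow strictly more slowly than $\kappa_\infty(M)$. Concretely, $\kappa_\infty(M)$ grows essentially like $M$ times a bounded Euler-type factor, whereas the multiplicative functions appearing in $C_2$ are bounded by $O(M^{1/2+\eps})$ via standard elementary estimates on divisor-type sums and class-number contributions. This yields $C_2 = o(\kappa_\infty(M))$ as $M \to \infty$, uniformly in $k$.

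To conclude, since $k + M \to \infty$ implies $(k-1)|D_2| \to \infty$ (either because $k \to \infty$ directly, or because $\kappa_\infty(M) \to \infty$), and because $C_2 = o((k-1)\kappa_\infty(M))$ in every regime, the asymptotic $\Delta_k(q^2, M) \sim \tfrac{1-k}{12}\kappa_\infty(M)$ follows at once, together with the divergence to $-\infty$. The main obstacle is the uniform comparison in step two: one must verify that each multiplicative function contributing to $C_2$ is strictly dominated by $\kappa_\infty(M)$ term by term, which depends on having the formulas of \cref{sec:dim} written in a form where such comparisons are transparent.
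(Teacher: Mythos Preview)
Your approach is correct and matches the paper's: isolate the $\frac{1-k}{12}\kappa_\infty(M)$ main term and bound the remainder $C_2$, which the paper does via the explicit estimates $|\kappa_\Delta(M)| \le 2^{\omega_1(M)}$ and $0 \le \alpha_2(M) < \sqrt M$ (there are no $M$-dependent class numbers in $C_2$, only these multiplicative pieces), compared against $\kappa_\infty(M) \ge \prod_{p \mid M}(p-1)$. One small correction: the Euler-type factor $\kappa_\infty(M)/M$ is not actually bounded below (consider $M$ primorial), but since $\kappa_\infty(M) \gg \phi(M) \gg M/\log\log M$ your $O(M^{1/2+\eps})$ bound on $C_2$ is still dominated and the argument goes through.
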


We also establish the asymptotic behavior in $q$ under local conditions on $M$ (see \cref{prop:q2}).  The fact that local root number distributions behave differently in $r=2$ parallels the fact that the bias of global root numbers is different for levels which are perfect squares (see \cite{me:rootno}).  We do not have a compelling intuitive explanation for why this is (for local or global root numbers), but we do note that, when $q$ is odd, $r=2$ is precisely the case where the class of possible local representations $\pi_q$ at $q$ associated to newforms $f \in S_k^\new(q^rN)$ includes ramified principal series and ramified twists of Steinberg representations, all of which have local root number ${-1 \leg q}$.  However, such forms cannot account for the bias towards local root number $-1$ when $q \equiv 1 \mod 4$.

We also remark that the fact that \eqref{eq:Deltak-form} only depends in a mild way on the weight implies the following.

\begin{cor} [boundedness in $k$] Fix a prime $q$ and $r \ge 1$.  Assume $r \ne 2$.  Then $\lvert \Delta_k(q^r, M) \rvert$ is bounded as $k \to \infty$.
\end{cor}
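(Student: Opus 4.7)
The plan is to deduce boundedness directly from the explicit formula \eqref{eq:Deltak-form}. With $q$ and $r \neq 2$ fixed, I would split into the two cases odd $r$ and even $r \geq 4$. For odd $r$ the formula reads
\[ \Delta_k(q^r, M) = C_1 h_{\Q(\sqrt{-q})} + \delta_{r=1} D_1, \]
while for even $r \geq 4$ the Kronecker delta $\delta_{r=2}$ vanishes, leaving
\[ \Delta_k(q^r, M) = C_2. \]
Thus in every case relevant to the corollary, the $k$-dependence of $\Delta_k(q^r, M)$ is carried entirely by the scalars $C_1, C_2, D_1$.

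According to the discussion following \eqref{eq:Deltak-form}, each of these functions depends on $k$ only through $k \bmod 24$ together with the separate exceptional value at $k=2$. Since $q$, $r$, and $M$ are now fixed, this means that $C_1, C_2, D_1$ each take only finitely many values (at most $12$ or $13$) as $k$ ranges over the even integers $\geq 2$. Consequently $\Delta_k(q^r, M)$ itself takes only finitely many values as $k \to \infty$, so $\lvert \Delta_k(q^r, M) \rvert$ is bounded; one could even record an explicit bound such as $\max_k |C_1|\, h_{\Q(\sqrt{-q})} + \max_k |D_1|$ in the odd $r$ case.

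The only substantive step is the preceding one: verifying from the explicit formulas derived in \cref{sec:dim} that the $k$-dependence of $C_i$ and $D_i$ genuinely factors through $k \bmod 24$ (and the isolated $k=2$ case). That verification is already the content of \cref{sec:dim}, summarized in \eqref{eq:Deltak-form}; once it is in hand, the corollary reduces to a trivial pigeonhole argument and requires no further input. The hypothesis $r \neq 2$ is used precisely to kill the unbounded $(k-1)D_2$ term, which would otherwise force $|\Delta_k(q^2, M)| \to \infty$ as in \cref{prop13}.
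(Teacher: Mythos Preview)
Your proposal is correct and is essentially the same argument the paper gives: the corollary is stated as an immediate consequence of \eqref{eq:Deltak-form} and the surrounding remark that $C_1, C_2, D_1$ depend on $k$ only through $k \bmod 24$ and whether $k=2$, with $r \ne 2$ serving precisely to eliminate the $(k-1)D_2$ term. The paper does not supply any further proof, and you have simply spelled out that observation in slightly more detail.
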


This boundedness is also a simple consequence of existing trace formulas, but perhaps was not explicitly stated in the literature.  In fact our formulas yield that $\lvert \Delta_k(q^r, M) \rvert$ is typically constant in $k$.

\begin{rem} The corollary implies that, if $r \ne 2$, the trace of $W_q$ on the $q$-new part of $S_k(q^r M)$ is bounded in $k$, and in fact only depends on $k$ a mild way.  One can view this as very strict equidistribution of the Atkin--Lehner sign at $q$ in the weight aspect.  A more refined problem is to study the distribution of $q$-adic Galois representations for modular forms at $q$; see recent work of Bergdall and Pollack \cite{BP} taking $r=1$. 
\end{rem}

\subsection{Correlation of initial Fourier coefficients with local signs}

In \cite{me:pharis}, we showed that for squarefree levels $N$, the trace of a Hecke operator $T_\ell$ on the subspace of forms in $S_k^\new(N)$ with root number $+1$ (resp., $-1$) is positive (resp., negative) for $\ell$ small relative to $N$.\footnote{There is a sign error for this result in \cite{me:pharis} when $k \equiv 0 \mod 4$.  We correct this in \cref{appendix}.} In other words, for small $\ell$, the sign of the Fourier coefficients $a_\ell(f)$ are biased towards the sign of the root number of $f$.  Combined with \cite{me:refdim}, this means that for small $\ell$ the Fourier coefficients $a_\ell(f)$ have a positive (resp., negative) bias for forms with the more common (resp., less common) global root number.

Here we obtain analogous results for local root numbers: for small $\ell$, the
Fourier coefficients $a_\ell(f)$ have a positive (resp., negative) bias for forms with the more common (resp., less common) local root number.  However, in this case the bias only occurs under suitable congruence/divisibility conditions, and when these are not satisfied, there is essentially no bias for the $a_\ell$'s based on the local root number.

For simplicity, we restrict to levels of the form $N= qM$ where $M$ is squarefree or twice a squarefree number.   

\begin{thm} [bias of initial Fourier coefficients] \label{thm16}
Let $q, \ell$ denote primes such that $\ell < \frac q4$.  Let $M$ be a squarefree or twice a squarefree number which is coprime to $q\ell$.  If $k=2$, further assume $4 \mid M$.
Suppose  $\Delta_k(q,M) \ne 0$.   
\begin{enumerate}
\item Either $\tr_{S_k^\new(q M)} T_\ell W_q$ is 0 or it has the same sign as $\Delta_k(q,M)$.  It is 0 if and only if (i) ${-q \ell \leg p} = 1$ for some odd $p \mid M$ or (ii) $M$ is even and $q\ell \equiv 7 \mod 8$.

\item
If $q$ is sufficiently large with respect to $k, M, \ell$, and if $\tr_{S_k^\new(q M)} T_\ell W_q \ne 0$, then the trace of $T_\ell$ on $S_k^\new(q M)^{\pm_q}$ has the same sign as $\pm \Delta_k(q,M)$.
\end{enumerate}
\end{thm}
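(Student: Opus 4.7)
The plan is to derive an explicit formula for $T := \tr_{S_k^\new(qM)} T_\ell W_q$ whose main term is structurally parallel to the formula \eqref{eq:Deltak-form} for $\Delta_k(q,M) = \tr_{S_k^\new(qM)} W_q$, but with $h_{\Q(\sqrt{-q})}$ replaced by $h_{\Q(\sqrt{-q\ell})}$ and $\kappa_{-q}(M)$ replaced by $\kappa_{-q\ell}(M)$. Both statements of the theorem follow from a comparison of main terms.

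First, I would apply an Eichler--Selberg trace formula to compute $\tr_{S_k(qM)} T_\ell W_q$ and then pass to the newform subspace by Möbius inversion over the divisors of $qM$. Since $T_\ell W_q$ has determinant $q\ell$, the elliptic contribution is a finite sum indexed by $t \in \Z$ with $t^2 < 4q\ell$ of class numbers of discriminant $t^2 - 4q\ell$ times multiplicative local factors at the primes of $qM$. The hypothesis $\ell < q/4$ gives $|t| < 2\sqrt{q\ell} < q$, which controls the $q$-local factor and, after newform projection, makes only $t = 0$ contribute to the main term, producing $h_{\Q(\sqrt{-q\ell})}$ (up to a possible conductor-$2$ correction depending on $q\ell \bmod 4$). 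The hyperbolic contribution is bounded as $q \to \infty$, and when $k = 2$ the assumption $4 \mid M$ annihilates the Eisenstein contribution.

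Next, I would organize the newform formula as
\[ T = \tfrac12 (-1)^{k/2}\, \kappa_{-q\ell}(M)\, h_{\Q(\sqrt{-q\ell})} + E, \]
with $E$ a bounded remainder. Writing $\kappa_{-q\ell}(M)$ as a product of local factors, vanishing criterion (i) corresponds exactly to the vanishing of an odd-prime factor (some $p \parallel M'$ with $(-q\ell/p) = 1$), and (ii) corresponds exactly to the vanishing of the $2$-adic factor when $M$ is even (namely $q\ell \equiv 7 \bmod 8$). When $\kappa_{-q\ell}(M) \ne 0$, its sign times $(-1)^{k/2}$ matches the sign of $\Delta_k(q,M)$ given by \cref{thm11}: since $M'$ is squarefree under our hypotheses one has $\omega_2(-q, M') = \omega_2(-q\ell, M') = 0$, and the relevant $2$-adic sign $b_{1,e,\cdot}$ depends only on $e \in \{0,1\}$, which is unchanged by the substitution $-q \mapsto -q\ell$. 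This yields part (1).

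Part (2) reduces to a size comparison via the identity
\[ \tr_{S_k^\new(qM)^{\pm_q}} T_\ell = \tfrac12 \bigl( \tr_{S_k^\new(qM)} T_\ell \pm T \bigr). \]
For fixed $k, M, \ell$ and $q \to \infty$, the Eichler--Selberg formula gives $\tr_{S_k^\new(qM)} T_\ell = O_{k,M,\ell}(1)$: $\ell$ is a non-square prime coprime to $qM$, so only finitely many elliptic terms appear, with class numbers $h(t^2 - 4\ell)$ bounded by $O_\ell(1)$ and local factors at $q$ bounded uniformly in $q$ (the $k=2$ Eisenstein correction is killed by $4 \mid M$). In contrast, Siegel's bound yields $h_{\Q(\sqrt{-q\ell})} \gg_\eps (q\ell)^{1/2 - \eps}$, so $|T| \to \infty$ whenever $T \ne 0$. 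For $q$ sufficiently large, the sign of the left side therefore equals the sign of $\pm T$, which by part~(1) equals the sign of $\pm \Delta_k(q,M)$.

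The main obstacle is the detailed sign comparison in part (1): one must arrange the formula for $T$ so that its multiplicative structure at each prime of $M$ matches that of $\Delta_k(q,M)$, and then check that the local contributions (especially at the $2$-adic place when $M$ is even or twice-squarefree, where the $\alpha_1$-type table from \cref{thm11} enters) are preserved under $-q \mapsto -q\ell$. The residual $k = 2$ case, where the Eisenstein term would otherwise pollute the newform formula, is also delicate, and is precisely why the hypothesis $4 \mid M$ is imposed.
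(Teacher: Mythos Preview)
Your proposal is correct and follows essentially the same route as the paper: the condition $4\ell < q$ forces the $s=0$ term to be the \emph{only} elliptic term (not just the main one), giving the exact formula \eqref{eq:lsmall}, and then one compares $\alpha_1(-q\ell;e)\kappa_{-q\ell}(M')$ with $\alpha_1(-q;e)\kappa_{-q}(M')$ prime by prime, exactly as you outline (note $e\in\{0,1,2\}$ under the hypotheses, not just $\{0,1\}$, but $\alpha_1$ has the same sign for $e=1,2$). For part~(2) the paper quotes the explicit Murty--Sinha bound $\lvert \tr_{S_k^\new(qM)} T_\ell\rvert \le \ell^{(k-1)/2}(8\ell^3 4^{\omega(qM)}+\ell^{3/2})$ rather than arguing directly from the trace formula, and simply uses $H(-4q\ell)\to\infty$ rather than Siegel; your version works equally well.
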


We remark that if the odd part of $M$ is not squarefree, then the correlation between the signs of $\Delta_k(q,M)$ and $\tr_{S_k^\new(q M)} T_\ell W_q$ will alternate depending on quadratic residue symbols involving odd primes with $p^2 \parallel M$.  
 On the other hand, if $p^3 \mid M$ for an odd $p$ or $32 \mid M$, then one has perfect equidistribution of both local root numbers at $q$ and trace of $T_\ell$ with respect to the local root number at $q$.  See \cref{sec:trlsmall} for details.

Moreover, it is clear from our trace formulas that one can similarly treat levels of the form $N =q^r M$ with $r \ge 2$, and the behavior that occurs is similar to the case of $r=1$.

\subsection{Murmurations} \label{sec:intro-murm}
Recently \cite{HLOP} numerically discovered an oscillatory pattern, which they call murmurations, in averages of $a_\ell$'s (for $\ell$ prime) over elliptic curves of fixed rank or root number.  Sutherland\footnote{See: \url{https://math.mit.edu/~drew/murmurations.html}} did further extensive calculations---for elliptic curves, modular forms, and abelian surfaces---to help clarify and make precise the murmuration phenomena.
The calculations for modular forms rely on the trace formula for $\tr_{S_k^\new(N)} T_\ell W_N$.  (One does not need to restrict to prime $\ell$, but we will for simplicity.)
We describe the phenomenon for modular forms, and then propose some generalizations, both for modular forms and elliptic curves.

Fix $k \ge 2$ even and let $\mathcal F = \mathcal F_k$ be a suitably large family of weight $k$ newforms, say all weight $k$ newforms (with trivial nebentypus), or all of those with squarefree level.  Let $\mathcal F(X,Y)$ (resp.\ $\mathcal F^{\pm}(X,Y)$) be the set of newforms $f \in \mathcal F$ with level $X \le N \le Y$ (resp.\ and have have root number $\pm 1$).  Denote by $\mathcal F(X,Y)^{(\ell)}$ (resp.\ $\mathcal F^{\pm}(X,Y)^{(\ell)}$ be the subset of such forms with level $N$ coprime to $\ell$.  The murmuration phenomena is the numerical observation that, for a fixed $\beta > 1$, the averages
\[ A_{\mathcal F}^\pm(\ell,X; \beta) = \frac 1{\#\mathcal F^{\pm}(X, \beta X)^{(\ell)}} \sum_{f \in \mathcal F^{\pm}(X,\beta X)^{(\ell)}} \ell^{1-\frac k2} a_\ell(f) \]
tend to continuous \emph{murmuration functions} $M^\pm_{\mathcal F}(x; \beta)$ as $\ell, X \to \infty$ such that $\frac \ell X \to x$.  The restriction to forms in  $\mathcal F^{\pm}(X,Y)^{(\ell)}$, as opposed to $\mathcal F^{\pm}(X,Y)$, is just a computational convenience and will not affect asymptotics (see \cref{sec:lmidN}).
The normalization factor $\ell^{1-\frac k2}$ weights the Fourier coefficients so that each term is $O(\sqrt \ell)$.  In all numerical calculations, we follow Sutherland and take $\beta = 2$.

\begin{figure}
\begin{minipage}{.5\textwidth}
\captionof{figure}{Weight 2 murmurations for $W_N$ for squarefree levels $1000 \le N \le 2000$}
 \includegraphics[width=\textwidth]{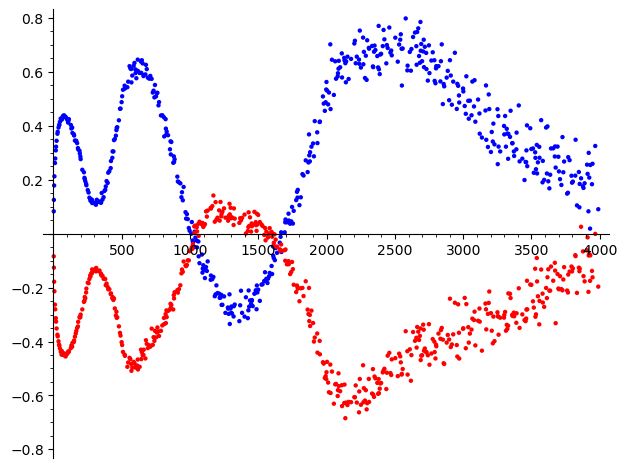}
\label{fig:fixedroot1}
\end{minipage}%
\begin{minipage}{.5\textwidth}
\captionof{figure}{Weight 2 murmurations for $W_N$ for squarefree levels $2000 \le N \le 4000$}
 \includegraphics[width=\textwidth]{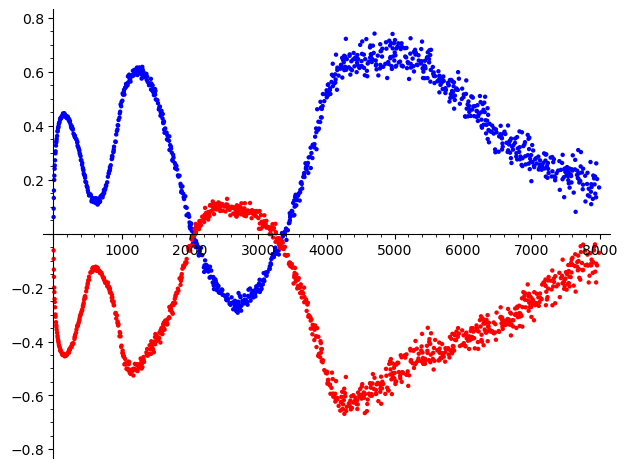}
\label{fig:fixedroot2}
\end{minipage}
\end{figure}

See \cref{fig:fixedroot1,fig:fixedroot2} for plots of $A_{\mathcal F}^\pm(\ell,X; 2)$ for $X = 1000$ and $X = 2000$ for the weight 2 squarefree level family.  Blue dots represent root number $+1$ and red dots $-1$.
We labeled the horizontal axes by $\ell$, but really one should think of the horizontal axes as being labeled by $\frac \ell X$, so both graphs have horizontal range $0 < \frac \ell X < 4$.  That the graphs have a limit in this scale is called \emph{scale invariance} in $\frac \ell X$.  The limiting murmuration functions $M^\pm_{\mathcal F}$ oscillate infinitely \cite{zubrilina}.

To analyze these murmurations, since $M^+_{\mathcal F}(x; \beta) = -M^-_{\mathcal F}(x; \beta)$ (see \cref{cor:63}) one can instead study a single weighted sum
\begin{equation} \label{eq:drew-avgs}
 A_{\mathcal F}(\ell,X; \beta) = \frac 1{\#\mathcal F(X,\beta X)^{(\ell)}}\sum_{f \in \mathcal F(X,\beta X)^{(\ell)}} w(f) \ell^{1 - \frac k2} a_\ell(f), 
\end{equation}
where $w(f)$ is the root number of $f$ and $\mathcal F(X,Y) =
\mathcal F^{+}(X,Y) \cup \mathcal F^{-}(X,Y)$.  
Then the assertion is that $A_{\mathcal F}(\ell,X; \beta) \to M_{\mathcal F}(x; \beta)$ as $\frac \ell X \to x$ where $M_{\mathcal F} = \frac 12( M^+_{\mathcal F} - M^-_{\mathcal F}) = M^+_{\mathcal F}$.

Zubrilina \cite{zubrilina} proved such murmurations for $A_{\mathcal F}$ when $\mathcal F$ is the family of weight $k$ newforms of squarefree level.  In fact, Zubrilina proves a localized version of murmurations---essentially this means one can work with intervals of the form $[X, \beta X]$ where $\beta \to 0$ as $X \to \infty$.  This leads to a continuous \emph{murmuration density function}, and one obtains the murmuration functions $M_{\mathcal F}(x; \beta)$  by integrating the murmuration density function (and this implies continuity in $\beta$).

As trace formulas for more general Atkin--Lehner operators times Hecke operators, i.e., $\tr_{S_k^\new(N)} T_\ell W_Q$ (where $Q = Q_N \mid N$), are in some ways quite similar to $\tr_{S_k^\new(N)} T_\ell W_N$, one might wonder if murmurations similarly exist with respect to Atkin--Lehner eigenvalues.  Here one needs to choose how to vary $Q$ along with $N$ in these the averages.

One possibility is to consider averages of the form
\begin{equation} \label{eq:gen-avgs}
A^{\mathcal Q}_{\mathcal F}(\ell,X; \beta) = \frac 1{\#\mathcal F(X,\beta X)^{(\ell)}} \sideset{}{'}\sum_{X \le N \le \beta X} \sum_{f \in \mathcal F(N)} w_Q(f) \sqrt{\frac NQ}  \ell^{1 - \frac k2} a_\ell(f),
\end{equation}
where $\mathcal Q$ is a sequence of divisors $Q \mid N$ for each level $N$ appearing the family $\mathcal F$.  Here $\mathcal F(N) = \mathcal F(N,N)$, and $w_Q(f)$ is the $W_Q$-eigenvalue of $f$.  The notation $\sum^\prime$ means we restrict to summing $N$ coprime to $\ell$.  The normalization factor $\sqrt{\frac N Q}$ is included to keep the averages at about the same size so they do not tend to $0$ as $X \to \infty$ (see \cref{sec:murmII}).

Note that if each $Q = N$ then \eqref{eq:gen-avgs} becomes \eqref{eq:drew-avgs}.  
At the other extreme if each $Q = 1$ then we are considering sums without any root numbers (as $w_1(f) = 1$).
See \cref{fig:murmN-1a,fig:murmN-1b} for plots of the averages in \eqref{eq:gen-avgs} in this case when $k=2$.  (Graphs for $k=4$ are roughly similar.)  Without the normalization factor of $\sqrt{\frac N Q} = \sqrt N$ in this case, the analogues of \cref{fig:murmN-1a,fig:murmN-1b} are graphs which individually have similar shapes, but whose vertical scale shrinks with $X$.  That the unweighted averages for $Q=1$ tend to $0$ reflects the root number symmetry $M^+_{\mathcal F}(x; \beta) = -M^-_{\mathcal F}(x; \beta)$.

\begin{figure}
\begin{minipage}{.5\textwidth}
\captionof{figure}{Weight 2 murmurations for $\sqrt N W_1$ for squarefree levels $2000 \le N \le 4000$}
 \includegraphics[width=\textwidth]{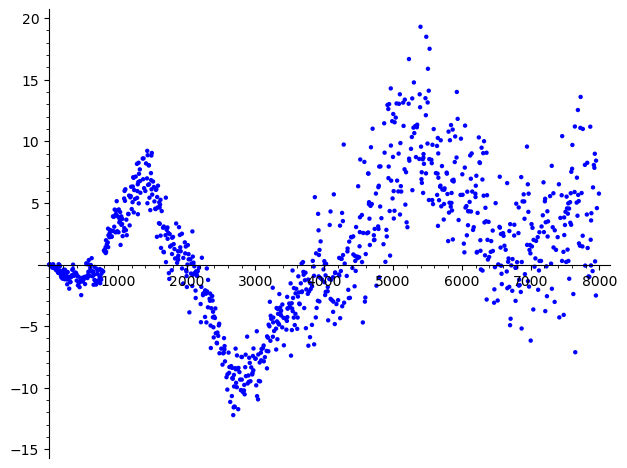}
\label{fig:murmN-1a}
\end{minipage}%
\begin{minipage}{.5\textwidth}
\captionof{figure}{Weight 2 murmurations for $\sqrt N W_1$ for squarefree levels $4000 \le N \le 8000$}
 \includegraphics[width=\textwidth]{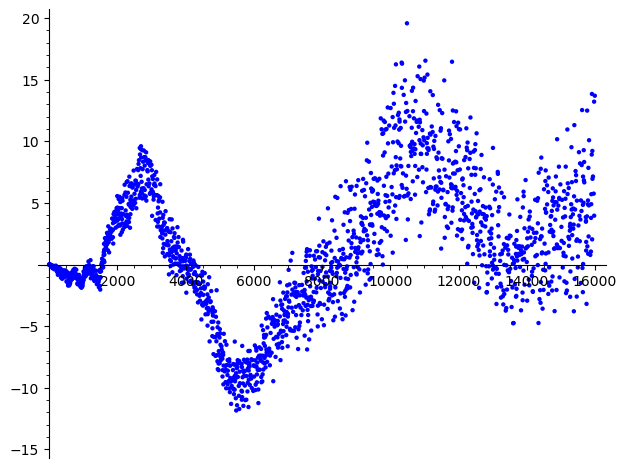}
\label{fig:murmN-1b}
\end{minipage}
\end{figure}

\begin{rem} One could also consider weighting the sums in \eqref{eq:gen-avgs} by signs, e.g., $(-1)^{\omega(N/Q)}$ in the squarefree case when $k \ge 4$, to account for the initial bias from \cref{thm11,thm16}.  However in the situations we tested including this sign actually destroys the murmurations! 
\end{rem}

In \cref{prop:Tl-large}, we write down a formula for $\tr_{S_k^\new(N)} W_Q T_\ell$ (for simplicity for squarefree $N$) which is amenable to computing such averages.  We used this to investigate murmurations with respect to Atkin--Lehner signs in a variety of settings.  What seems important for the existence of such murmurations is that one considers a sequence of $(N,Q)$'s  which are ``arithmetically compatible''.  For instance, taking a sequence of $(N, Q)$'s where $Q \approx \sqrt N$, we numerically saw a random distribution of averages with no apparent murmurations.

Let $\N^{\sqf}$ denote the set of squarefree positive integers, and for $r \ge 1$ let $\N^{\sqf}_r$ be the subset of $\N^{\sqf}$ consisting of those with exactly $r$ prime factors.
We say a sequence of pairs $\{ (N,Q) \}$ is \emph{arithmetically compatible} in any of the following situations:

\begin{enumerate}[(I)]
\item $N = Q M$ where $M$ is constant and $Q$ ranges over all elements of $\N^{\sqf}$ or $\N^{\sqf}_r$ such that $(M,Q) = 1$.

\item $N = Q M$ where $Q$ is a squarefree constant and $M$ ranges over all elements of $\N$, $\N^{\sqf}$ or $\N^{\sqf}_r$ such that $(M,Q) = 1$.

\item Fix $r \ge 2$, let $0 \le m < r$, and fix primes $p_1 < \dots < p_m$.  Let $N = p_1 \dots p_r$ range over elements of $\N^{\sqf}_r$ such that $p_1 < \dots < p_r$, and $Q = p_{i_1} \dots p_{i_s}$ where $0 \le s < r$ and $\{ i_1, \dots, i_s \}$ is a fixed subset of $\{ 1, \dots, r \}$.
\end{enumerate}
In all cases it is assumed the sequence $\{ (N,Q) \}$ is arranged in order of increasing $N$.

For instance, when $r=2$, Type III consists of sequences $\{ (N,Q) = (p_1 p_2, Q) : p_1 < p_2 \}$, where we can choose to fix $p_1$ or not, and $Q$ is taken to be one of the following 4 fixed forms: $1, p_1, p_2, p_1 p_2$.  Note that Type III includes the $\N^{\sqf}_r$ cases of Types I and II.  

In \cref{fig:murmN-1a,fig:murmN-1b} where $Q=1$, and more generally for Type II and II graphs, it is not clear whether the averages $A^{\mathcal Q}$ should actually converge to a continuous function with fluctuations in very short intervals or whether there is some inherent ``random noise.''  To be more confident the limiting graphs should exist, we consider the $\delta$-smoothed averages
\[ \tilde A^{\mathcal Q,\delta}_{\mathcal F}(\ell,X; \beta) = \frac 1{\# \{ \ell' : \ell \le \ell' < \ell + \ell^\delta \} }\sum_{\ell \le \ell' < \ell +  \ell^\delta} A^{\mathcal Q}_{\mathcal F}(\ell',Y; \beta). \]  
See \cref{fig:murmN-sm1,fig:murmN-sm2} for $\delta$-smoothed versions of \cref{fig:murmN-1b}.

\begin{figure}
\begin{minipage}{.5\textwidth}
\captionof{figure}{$\delta$-smoothed version of \cref{fig:murmN-1b} with $\delta = \frac 12$}
 \includegraphics[width=\textwidth]{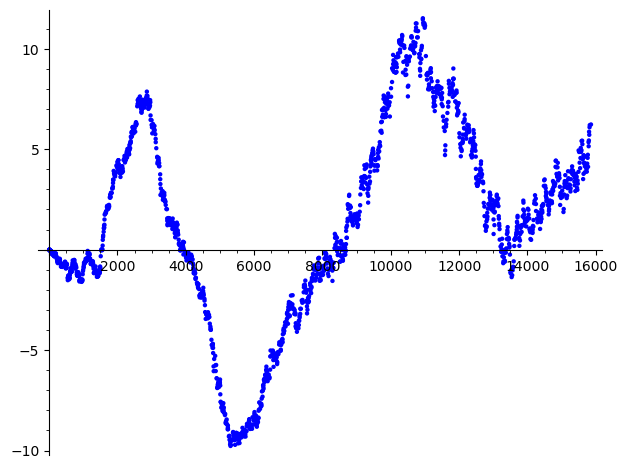}
\label{fig:murmN-sm1}
\end{minipage}%
\begin{minipage}{.5\textwidth}
\captionof{figure}{$\delta$-smoothed version of \cref{fig:murmN-1b} with $\delta = \frac 34$}
 \includegraphics[width=\textwidth]{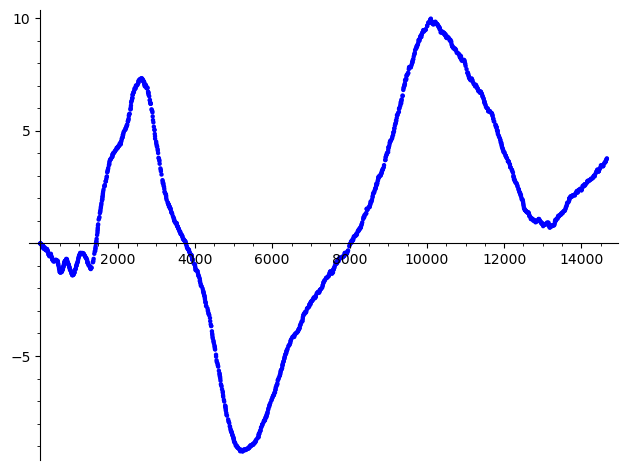}
\label{fig:murmN-sm2}
\end{minipage}%
\end{figure}

\begin{conj} [Murmurations for Atkin--Lehner operators]
\label{conj:WQ-murm}
Let $(\mathcal N, \mathcal Q)$ be an arithmetically compatible sequence of $(N,Q)$'s of Type I, II or III as above, and fix a weight $k$. 
Let $\mathcal F$ the family of newforms which lie in $S^\new_k(N)$ for some $N \in \mathcal N$. 

\begin{enumerate}
\item If $(\mathcal N, \mathcal Q)$ is of Type I, then the averages $A^{\mathcal Q}_{\mathcal F}(\ell,X)$ have murmurations which are scale invariant in $\frac \ell N$.  More precisely, as $\ell, X \to \infty$ such that $\frac \ell X \to x$, 
\[  A^{\mathcal Q}_{\mathcal F}(\ell,X; \beta) \to M_{\mathcal F}^{\mathcal Q}(x; \beta) \]
for a murmuration function $M_{\mathcal F}^{\mathcal Q}$ which is continuous on $[0,\infty) \times (1, \infty)$.

\item If $(\mathcal N, \mathcal Q)$ is of Type II or III, then for some $\delta < 1$ the $\delta$-smoothed averages $\tilde A^{\mathcal Q, \delta}_{\mathcal F}(\ell,X; \beta)$ have murmurations which are scale invariant in $\frac \ell N$.  More precisely, as $\ell, X \to \infty$ such that $\frac \ell X \to x$, 
\[  \tilde A^{\mathcal Q, \delta}_{\mathcal F}(\ell,X; \beta) \to \tilde M_{\mathcal F}^{\mathcal Q,\delta}(x; \beta) \]
for a murmuration function $\tilde M_{\mathcal F}^{\mathcal Q, \delta}$ which is continuous on $[0,\infty) \times (1, \infty)$.
\end{enumerate}
\end{conj}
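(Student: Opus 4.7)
The plan is to follow Zubrilina's approach \cite{zubrilina} to the $Q=N$ case and generalize it to arbitrary $Q \mid N$. The starting point is the trace formula of \cref{prop:Tl-large}, which expresses $\tr_{S_k^\new(N)} W_Q T_\ell$ as a sum of identity, elliptic, hyperbolic and parabolic contributions. Each term is essentially multiplicative in the prime factorization of $N$, with the local factor at a prime $p$ depending on whether $p \mid Q$ or $p \mid N/Q$ through the Atkin--Lehner sign. The elliptic contribution has the Eichler--Selberg shape $\sum_t h(t^2 - 4\ell) L_{N,Q}(t;\ell)$ for a local-factor product $L_{N,Q}(t;\ell)$, and this should be the main term producing the murmuration function, exactly as in the $Q=N$ case treated by Zubrilina.

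After plugging into \eqref{eq:gen-avgs}, normalizing by $\sqrt{N/Q}\,\ell^{1-k/2}$ and averaging over $X \le N \le \beta X$, one interchanges summations and studies the resulting averages of $L_{N,Q}(t; \ell)$ as $(N, Q)$ ranges over the arithmetically compatible sequence. For Type I with $M$ fixed, the local factors at primes $p \mid M$ contribute a multiplicative constant while $Q$ varies over a dense subset of squarefree integers; Zubrilina's argument should then apply essentially directly, yielding pointwise convergence as in part (1). For Types II and III, where only a proper subset of the primes of $N$ varies, fewer independent summands are available, and one must introduce $\delta$-smoothing in $\ell$. Smoothing $h(t^2 - 4\ell)$ over a short interval of length $\ell^\delta$ damps the arithmetic oscillations of the local factors, and one would choose $\delta < 1$ large enough (depending on the density of the sequence) for the non-main contributions to become negligible, yielding part (2).

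The main obstacle will be controlling the non-elliptic terms uniformly in $(N, Q)$ along the arithmetically compatible sequence. In Zubrilina's setup, the full variation in $N$ suffices to dispose of the hyperbolic and parabolic contributions; in Types II and III, only part of the factorization varies, so one needs delicate sieve and character-sum estimates to handle the residual divisor sums appearing in these terms, and the precise threshold value of $\delta$ will depend on the strength of those estimates. A secondary difficulty is establishing continuity of the limit function $\tilde M_{\mathcal F}^{\mathcal Q, \delta}$ in both variables $(x, \beta)$: this requires uniformity of all error estimates across scales, which for Types II and III must be passed through the $\delta$-smoothing layer and combined with monotonicity arguments in $\beta$ analogous to those Zubrilina uses to deduce continuity from a murmuration density.
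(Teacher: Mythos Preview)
The statement you are attempting to prove is a \emph{conjecture}, and the paper does not prove it.  The paper explicitly says that the $M=1$, $Q\in\N^{\sqf}$ case of Type~I follows from Zubrilina, and that one ``expect[s] that one can prove the general Type I case in a similar way,'' but it only establishes the partial result \cref{thm:WQ-murm} (convergence for $x<\frac{1}{4M}-\eps$) as evidence.  So there is no proof in the paper to compare against; your proposal is a sketch of an attack on an open problem.

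That said, your plan has a concrete gap for Types~II and~III.  You frame the difficulty as ``fewer independent summands are available'' when only part of the factorization varies, to be remedied by $\delta$-smoothing.  But the paper identifies a more fundamental obstruction: in the trace formula of \cref{prop:Tl-large}, the $s$-sum runs over $s^2\le 4\ell/Q$.  For Type~I, $Q$ grows with $N\sim X$, so for fixed $x=\ell/X$ only finitely many $s$ contribute, and Zubrilina's term-by-term analysis applies.  For Types~II and~III, $Q$ is bounded (or fixed), so the number of $s$-terms is $\asymp\sqrt\ell\to\infty$.  This is not merely a matter of having fewer $N$'s to average over; it means the main-term analysis itself is a sum of unboundedly many class-number contributions $H(s^2Q^2-4Q\ell)$, each with its own $\xi_\Delta(M)$ factor, and one must control this growing sum rather than a finite linear combination.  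Your proposal of smoothing in $\ell$ does not obviously address summing over $s$, and the paper explicitly notes this ``will require a different type of analysis'' from Zubrilina's.  Until you explain how to handle the unbounded $s$-sum, the Types~II/III part of your plan is incomplete.
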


Recall that Type I murmurations with $N=Q$ squarefree and $k=2$ are illustrated in \cref{fig:fixedroot1,fig:fixedroot2}.  See \cref{fig:murm5Q} for a Type I plot with $N=5Q$ ($Q$ squarefree) and $k=4$, averaging over the range $15000 < N < 30000$.
Similarly, non-smoothed and smoothed Type II plots with $Q=1$ are given in 
\cref{fig:murmN-1a,fig:murmN-1b,fig:murmN-sm1,fig:murmN-sm2}.
Illustrations of Type III situations with $N=pq$ are presented in a different format in  \cref{fig:pq-prof,fig:pq-prof4,fig:pq-all-prof4} by looking at Atkin--Lehner eigenspaces, as will be described below.

\begin{figure}
\begin{minipage}{.5\textwidth}
\captionof{figure}{Weight 4 murmurations for $N = 5Q$ squarefree}
 \includegraphics[width=\textwidth]{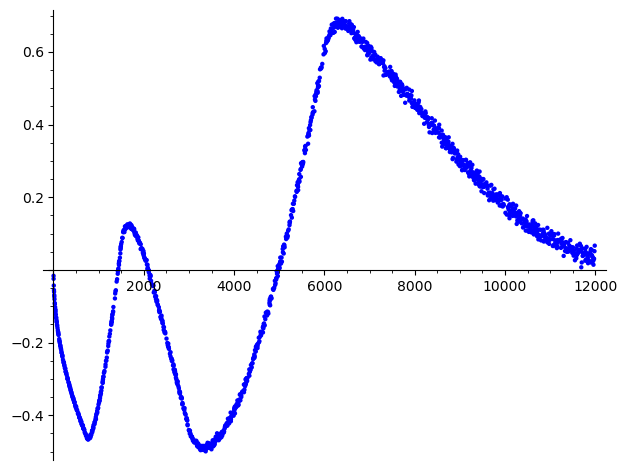}
\label{fig:murm5Q}
\end{minipage}%
\begin{minipage}{.5\textwidth}
\captionof{figure}{Murmurations for AL-eigenspaces on $S_2(2q)$}
 \includegraphics[width=\textwidth]{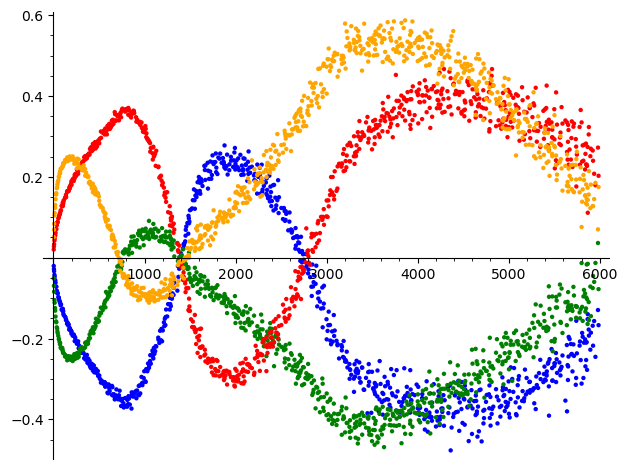}
\label{fig:pq-prof}
\end{minipage}%
\end{figure}

When $M=1$ and $Q \in \N^\sqf$, \cref{conj:WQ-murm} follows from \cite{zubrilina}, and we expect that one can prove the general Type I case in a similar way.  However, the types of sums that one needs to handle for Types II and III will require a different type of analysis.  (For Type I and given $x$, there are only finitely many terms to consider from the trace formula, but for Types II and III there are an unbounded number of terms.)  Here we merely show the following as evidence towards the above conjecture.

\begin{thm} \label{thm:WQ-murm} Assume $(\mathcal N, \mathcal Q)$ is an arithmetically compatible family of pairs of Type I, where $\mathcal N = \{ MQ : Q \in \mathbb N^\sqf, (Q, M) = 1 \}$ for some fixed squarefree $M$.  Then \cref{conj:WQ-murm}(1) holds for $x < \frac 1{4M} - \eps$, for any $\eps > 0$.  Specifically $A_{\mathcal F}^{\mathcal Q}(\ell, X; \beta) \to c \sqrt x + \delta_{k=2} d$ in this range, for some constants $c = c_{\mathcal F, \mathcal Q, \beta}$ and $d = d_{\mathcal F, \mathcal Q, \beta}$. 
\end{thm}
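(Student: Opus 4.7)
The plan is to adapt Zubrilina's analysis \cite{zubrilina} of the $M=1$ case to general fixed squarefree $M$. Specifically, I would substitute the explicit formula for $\tr_{S_k^\new(MQ)} W_Q T_\ell$ from \cref{prop:Tl-large} into the average \eqref{eq:gen-avgs}, sum over squarefree $Q$ coprime to $M$ in the window $[X/M, \beta X/M]$, and extract the limit as $\ell, X \to \infty$ with $\ell/X \to x < 1/(4M) - \eps$.

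First, for squarefree $N = MQ$, the trace $\tr W_Q T_\ell$ expands as a combination of: elliptic contributions indexed by integers $t$ with $|t| \le 2\sqrt\ell$ (each involving Hurwitz class numbers $H(4\ell - t^2)$ and multiplicative local factors at primes dividing $N$), hyperbolic terms (simple since $\ell$ is prime), and a weight-$2$ parabolic correction. The key observation is that the hypothesis $\ell < (1/(4M) - \eps) X$ together with $MQ \ge X$ forces $4\ell < Q$, so $(4\ell - t^2, Q) = 1$ for every $|t| \le 2\sqrt{\ell}$ and every $Q$ in the window. This coprimality collapses the local factor at each $p \mid Q$ to a single Eulerian constant independent of the factorization of $Q$, so the $Q$-dependence inside each elliptic term can be pulled outside the average, leaving only finitely many $t$-terms to track (as anticipated in the discussion preceding the theorem).

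Next, the count $\#\{Q \in [X/M, \beta X/M] : Q \text{ squarefree}, (Q,M)=1\}$ is $\sim \frac{6}{\pi^2} \prod_{p \mid M} \frac{p}{p+1}(\beta - 1) X/M$ by a standard Wintner-type argument, and by dimension formulas $\#\mathcal F(X, \beta X)^{(\ell)}$ has the same order of magnitude; their ratio tends to an explicit constant. The main term of the resulting average takes the shape
\[
\sqrt{M} \cdot \ell^{1-k/2} \sum_{|t| < 2\sqrt{\ell}} P_k(t,\ell) H(4\ell - t^2) g_M(t) + \delta_{k=2} d + o(1),
\]
where $g_M$ encodes the residual local factors at primes dividing $M$ and $P_k$ is the Gegenbauer-type polynomial from the Eichler--Selberg trace formula. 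Applying the Kronecker--Hurwitz class-number relation (suitably twisted by $g_M$) to this finite sum in the regime $\ell/X \to x$ yields a limit of the form $c\sqrt{x} + \delta_{k=2} d$; the $\sqrt{x}$ shape arises because, after all normalizations, the elliptic contribution scales like $\sqrt{\ell/X}$ in this regime.

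The main obstacle is establishing the limit uniformly in the joint regime $\ell, X \to \infty$ with $\ell/X \to x$, simultaneously controlling errors in the squarefree count, in the class-number averaging, and in the lower-order hyperbolic and elliptic contributions. This is precisely the content of Zubrilina's delicate analysis in the $M = 1$ case, and the strict inequality $x < 1/(4M) - \eps$ is exactly what rules out the oscillatory contributions (from terms where $Q$ divides some $4\ell - t^2$) that would otherwise spoil the clean $\sqrt{x}$ background and give rise to the more intricate murmurations expected outside this range.
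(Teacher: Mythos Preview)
Your proposal misidentifies the structure of the trace formula for $T_\ell W_Q$. You describe elliptic terms indexed by $|t| \le 2\sqrt\ell$ with class numbers $H(4\ell - t^2)$ and local factors at all primes dividing $N = MQ$; that is the shape of the ordinary $T_\ell$ trace formula, not the one in \cref{prop:Tl-large}. For $T_\ell W_Q$ the elliptic terms are indexed by $s$ with $s^2 \le 4\ell/Q$, the class numbers are $H(s^2 Q^2 - 4Q\ell)$, and the only local factor is $\xi_\Delta(M)$ at primes dividing $M$---there is no Euler product over $p \mid Q$ to ``collapse.'' Consequently, the actual effect of $4\ell < Q$ (which your hypothesis does force) is not a coprimality statement about $4\ell - t^2$ and $Q$ (which is false in general anyway), but simply that $s=0$ is the only surviving term, leaving the single $Q$-dependent class number $H(-4Q\ell)$ in each summand.

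This changes the nature of the remaining analysis completely. There is no sum over $t$ to which one could apply a Kronecker--Hurwitz relation. Instead, what is needed is an asymptotic for $\sum'_{X/M \le Q \le \beta X/M} \xi_{-4Q\ell}(M)\, H(-4Q\ell)$ as $\ell, X \to \infty$, i.e.\ a class number average over squarefree $Q$ in arithmetic progressions mod $8M$ (since $\xi_{-4Q\ell}(M)$ only depends on $Q\ell \bmod 8M$). The paper obtains this via Dirichlet's class number formula, P\'olya--Vinogradov, and a character sum bound of \cite{zubrilina}, yielding a main term of size $X\sqrt{\ell X}$; dividing by $\#\mathcal F(X,\beta X) \asymp X^2$ (note: not $\asymp X$ as you wrote---each $S_k^\new(N)$ has dimension $\asymp N$) then produces the $c\sqrt{x}$ limit. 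Your proposed route through a $t$-sum and Kronecker--Hurwitz does not reach this computation.
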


In particular, when $M=1$, this says that the first $\frac 1{16}$-th of the graphs in \cref{fig:murmN-1a,fig:murmN-1b} are approximately of the form $c \sqrt x + d$.  This agrees with \cite{zubrilina}, which also computes $c, d$.
Similarly, the theorem asserts that the first $\frac 1{25}$-th of the graph in \cref{fig:murm5Q} is approximately of the form $c \sqrt x$.

If one works with (squarefree or general) levels $N$ that have a fixed number of prime divisors as in Type III, then one can alternatively look how the collection of Atkin--Lehner signs is correlated with Fourier coefficients.  See \cref{fig:pq-prof,fig:pq-prof4} for a graph of averages of $a_\ell$'s over newforms in $S_2(pq)$ and $S_4(pq)$ with fixed Atkin--Lehner signs at $p, q$, where $p = 2$ and $3000 < q < 6000$.  The blue and green dots correspond to signs \verb|++| and \verb|--| and red and orange dots to signs \verb|+-| and \verb|-+|, respectively.  (The first sign denotes the sign at $p$, and the second the sign at $q$.)  See \cref{fig:pq-all-prof4} for the analogous graph for $S_4(pq)$ where $p, q$ both vary such that $p < q$ and $6000 < pq < 12000$.

\begin{conj} [Murmurations on Atkin--Lehner eigenspaces]
\label{conj:eps-murm}
Fix $k, r$ and $0 \le m < r$.  Fix primes $p_1 < \dots < p_m$.  Let $\mathcal N$ be the set of levels $N = p_1 \dots p_r \in \N^{\sqf}_r$ such that $p_1 < p_2 < \dots < p_r$.
Let $\mathcal F$ be the family of weight $k$ newforms with level in $\mathcal N$.  For $\eps = (\eps_1, \dots, \eps_r) \in {\pm 1}^r$, let $\mathcal F^\eps$ be the subset of $f \in \mathcal F$ with Atkin--Lehner sign $\eps_i$ at $p_i$, and consider the averages
\begin{equation} \label{eq:al-avgs}
A^{\eps}_{\mathcal F}(\ell,X; \beta) = \frac 1{\#\mathcal F^\eps(X,\beta X)}  \sideset{}{'}\sum_{f \in \mathcal F^\eps(X,\beta X)} \ell^{1 - \frac k2} a_\ell(f).
\end{equation}

These averages have murmurations which are scale invariant in $\frac \ell N$.  That is, as $\ell, X \to \infty$ such that $\frac \ell X \to x$, 
\[  A^\eps_{\mathcal F}(\ell,X; \beta) \to M_{\mathcal F}^{\eps}(x; \beta) \]
for a murmuration function $M_{\mathcal F}^{\eps}$ which is continuous on $[0,\infty) \times (1, \infty)$.
\end{conj}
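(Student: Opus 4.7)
The plan is to reduce \cref{conj:eps-murm} to the Type III case of \cref{conj:WQ-murm} by Atkin--Lehner projector decomposition, together with an analysis of the normalizing denominator. For each $N = p_1 \cdots p_r \in \mathcal N$ and $\eps \in \{\pm 1\}^r$, the orthogonal projector onto the $\eps$-eigenspace of $(W_{p_1}, \dots, W_{p_r})$ on $S_k^\new(N)$ is
\[
P_\eps \; = \; \prod_{i=1}^r \frac{1 + \eps_i W_{p_i}}{2} \; = \; \frac{1}{2^r}\sum_{S \subseteq \{1,\dots,r\}} \Big(\prod_{i \in S}\eps_i\Big) W_{Q_S}, \qquad Q_S := \prod_{i \in S} p_i.
\]
Taking the trace against $T_\ell$ and summing over $N \in [X,\beta X] \cap \mathcal N$ gives
\[
\sum_{f \in \mathcal F^\eps(X,\beta X)} a_\ell(f) \; = \; \frac{1}{2^r}\sum_{S}\Big(\prod_{i \in S}\eps_i\Big) \sum_{N \in [X,\beta X] \cap \mathcal N} \tr_{S_k^\new(N)} T_\ell W_{Q_S}.
\]
For each fixed $S$, the sequence of pairs $(N,Q_S)$ as $N$ varies is exactly an arithmetically compatible sequence of Type III from \cref{sec:intro-murm} (the positions of the primes entering $Q$ are fixed, while the primes themselves vary with $N$).

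Granting \cref{conj:WQ-murm}(2) for each such $S$, one obtains a limiting (smoothed) murmuration function $\tilde M^{Q_S,\delta}_{\mathcal F}$ for every term. The next step is to handle the normalization discrepancy: the averages in \cref{conj:WQ-murm} carry an internal weight $\sqrt{N/Q_S}$, whereas the eigenspace average in \cref{conj:eps-murm} does not. The plan is to partition the interval $[X,\beta X]$ into a bounded family of subsets on which the sizes of the individual primes $p_i$ are essentially fixed dyadically; on each such subset $\sqrt{N/Q_S}$ is approximately constant, so \cref{conj:WQ-murm}(2) applied piecewise recovers the unweighted sum. Summing the contributions of the pieces then gives an asymptotic, continuous in $x$ and $\beta$, for the numerator. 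Finally, the denominator $\#\mathcal F^\eps(X,\beta X)$ is asymptotically $2^{-r}\#\mathcal F(X,\beta X)$ up to lower-order class-number terms controlled by the dimension formulas of \cref{sec:dim} (specializing \cref{thm11,thm12,prop13} to each $p_i$), and its growth in $X$ follows from standard sieve asymptotics for tuples of primes whose product lies in a short dyadic interval. Dividing and combining with the $\eps$-signs produces the claimed $M_{\mathcal F}^\eps(x;\beta)$.

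The main obstacle is the piecewise application in the previous step, and more fundamentally the input \cref{conj:WQ-murm}(2) itself in Type III generality. In Type I, only finitely many terms of the trace formula contribute for a given $x$, which is what makes \cref{thm:WQ-murm} accessible; in Types II and III, an unbounded number of elliptic and Eisenstein terms intervene as $X \to \infty$, and one needs a genuinely local (rather than interval-averaged) form of the murmurations, in the spirit of Zubrilina's density statement in \cite{zubrilina}. Establishing such a local statement uniformly across the arithmetic shape of $N$ (i.e.\ across the dyadic prime partitions needed above) is the crux of the argument and will require a careful uniform analysis of the trace formula for $\tr_{S_k^\new(N)} T_\ell W_Q$ from \cref{prop:Tl-large}, tracking the class-number sums that produce the oscillatory part together with the Eisenstein pieces that control the $k=2$ correction.
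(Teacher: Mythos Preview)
The statement you are attempting to prove is labeled a \emph{conjecture} in the paper, and the paper does not offer a proof. What the paper does provide (in \cref{sec:eps-murm}) is a heuristic link between \cref{conj:eps-murm} and \cref{conj:WQ-murm}, obtained exactly as you propose: expand the projector onto the $\eps$-eigenspace as $2^{-r}\sum_{Q\mid N}\eps(Q)W_Q$, sum traces against $T_\ell$, and interpret each $Q$-piece as a Type~III family. The paper also uses the approximation $\dim S_k^\new(N)^\eps \approx 2^{-r}\dim S_k^\new(N)$ for the denominator, as you do. So your decomposition is the same one the paper employs, but the paper stops at the expected relation $M^\eps_{\mathcal F} = \sum_I \tilde c_{I,x,\beta} M_{\mathcal F}^{\mathcal Q_I}$ rather than claiming a proof.

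You correctly flag the genuine obstruction: \cref{conj:WQ-murm}(2) is itself open in Type~III generality, and without it your reduction does not yield a proof. One further issue you gloss over is the mismatch between the \emph{smoothed} averages $\tilde A^{\mathcal Q,\delta}$ in \cref{conj:WQ-murm}(2) and the \emph{unsmoothed} averages in \cref{conj:eps-murm}. Your dyadic-partition idea for removing the $\sqrt{N/Q_S}$ weight does not by itself remove the $\delta$-smoothing; the paper's rationale (see \cref{rem:murm}(3)) is rather that in the full sum over $Q\mid N$ the $Q=N$ term is of Type~I (where unsmoothed convergence is expected), and the proper-$Q$ terms are lower order because $c_{I,X,\beta}\to 0$ unless $\{1,\dots,m\}\subset I$. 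Making that precise would require more than the piecewise argument you sketch. In short: your outline is the right heuristic and matches the paper's, but it is a reduction to an open conjecture, not a proof.
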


\begin{figure}

\begin{minipage}{.5\textwidth}
\captionof{figure}{Murmurations for AL-eigenspaces on $S_4(2q)$}
 \includegraphics[width=\textwidth]{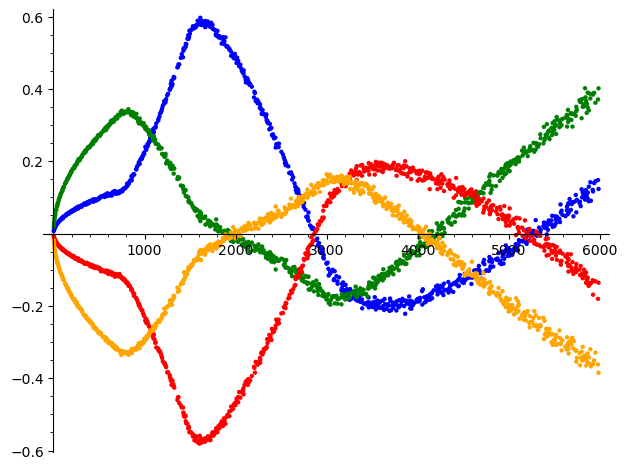}
\label{fig:pq-prof4}
\end{minipage}%
\begin{minipage}{.5\textwidth}
\captionof{figure}{Murmurations for AL-eigenspaces on $S_4(pq)$}
 \includegraphics[width=\textwidth]{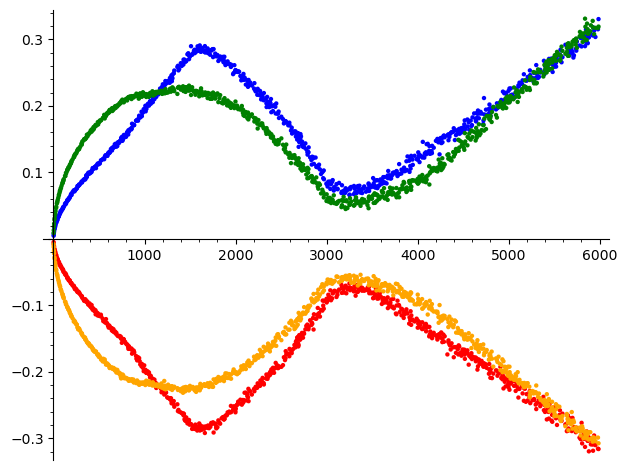}
\label{fig:pq-all-prof4}
\end{minipage}%
\end{figure}

\begin{rem} \label{rem:murm}
\begin{enumerate}
\item As in the case of murmurations with respect to global root numbers, we expect these murmuration functions oscillate infinitely, and at least for Type I that they arise from murmuration density functions which correspond to letting $\beta \to 0$ as $X \to \infty$.

\item In the Type I case of \cref{conj:WQ-murm}, there is no need to weight by $\sqrt{N/Q}$ as $N/Q$ is constant.  For Type II, one is  weighting by a constant times $\sqrt N$, so by scale-invariance one could alternatively weight by $\sqrt \ell$.  We chose to weight by $\sqrt{N/Q}$ as it seems to be the right order of normalization in general. 

\item When averaging Fourier coefficients over Atkin--Lehner eigenspaces $\mathcal F^\eps$ in \cref{conj:eps-murm}, there is no need to weight by an analogue of $\sqrt{N/Q}$ (or consider smoothed averages), since the trace of $T_\ell$ on $\mathcal F^\eps(N)$ is a linear combination of the traces of $T_\ell W_Q$ on $\mathcal F(N)$ where one sums over all $Q \mid N$.  Correspondingly, we expect the murmuration functions to be different on each Atkin--Lehner eigenspace when $r=m+1$, i.e., when all but one prime is fixed in the level as in \cref{fig:pq-prof,fig:pq-prof4}.

\item It is not clear whether the smoothed averages are actually needed in \cref{conj:WQ-murm}(2), or how much smoothing is actually needed.

\item One could also consider analogues where $Q$ is not required to be squarefree.
\end{enumerate}
\end{rem}

See \cref{sec:eps-murm} for details on how \cref{conj:eps-murm} is related to \cref{conj:WQ-murm}.  This relation implies that \cref{thm:WQ-murm} also provides evidence for  \cref{conj:eps-murm}.

Finally, one might wonder about analogues of \cref{conj:WQ-murm,conj:eps-murm} in the original setting of elliptic curves.  Earlier calculations of Sutherland indicate that there are no apparent murmurations if one does not weight by any root number; more generally our calculations also do not suggest any murmurations for elliptic curves in Type II situations.  

However, numerically there appear to be murmurations in Type I situations, i.e., $N=QM$ with $M$ fixed and $Q$ varying, at least after smoothing.
For instance, see \cref{fig:EC2M} for a plot of $\tilde A^{\mathcal Q}_{\mathcal E}(\ell,X;2)$ for the family with $N = 2Q$ squarefree, $X = 20000$ and $\beta = 2$, and \cref{fig:EC2M-sm} for the smoothed averages $\tilde A^{\mathcal Q,\delta}_{\mathcal E}(\ell,X)$ with $\delta = 0.75$.  For comparison, these averages (in blue) are plotted on top of the averages \eqref{eq:drew-avgs} weighted by global root numbers (in red).  This suggests the following.

\begin{figure}
\begin{minipage}{.5\textwidth}
\captionof{figure}{$w_Q$ (blue) versus $w_N$ (red) murmurations for elliptic curves of squarefree conductor $N = 2Q$, with $20000 < Q < 40000$}
 \includegraphics[width=\textwidth]{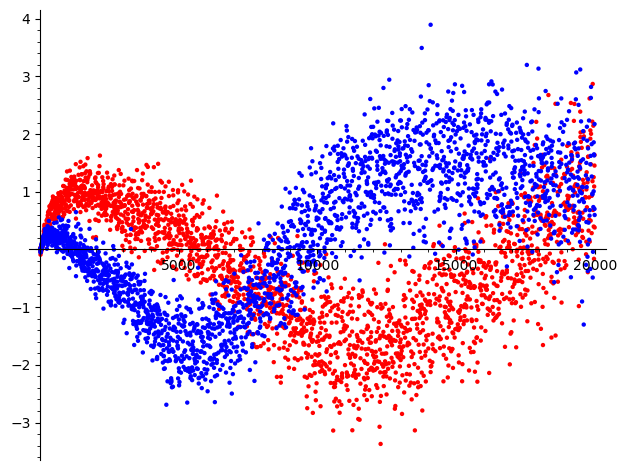}
\label{fig:EC2M}
\end{minipage}%
\begin{minipage}{.5\textwidth}
\captionof{figure}{$\delta$-smoothed analogues of \cref{fig:EC2M} with $\delta = \frac 34$}
 \includegraphics[width=\textwidth]{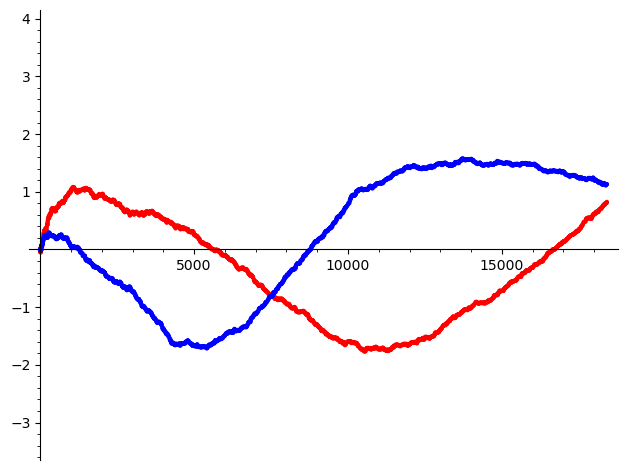}
\label{fig:EC2M-sm}
\end{minipage}%
\end{figure}

\begin{conj} [Partial root number murmurations for elliptic curves]
\label{conj:EC-murm}
Let $(\mathcal N, \mathcal Q)$ be an arithmetically compatible sequence of $(N,Q)$'s of Type I. 
Let $\mathcal E$ the set of rational newforms which lie in $S_2(N)$ for some $N \in \mathcal N$.  Then for some $\delta < 1$, the smoothed averages  $\tilde A^{\mathcal Q, \delta}_{\mathcal E}(\ell,X; \beta)$ have murmurations which are scale invariant in $\frac \ell N$.
\end{conj}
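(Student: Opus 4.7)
My plan is to reduce to the modular-form case of \cref{conj:WQ-murm}(1) and then handle the restriction to rational newforms via a Sato--Tate-type comparison, with the $\delta$-smoothing absorbing residual errors. As a first step I would establish the modular-form analogue for the family $\mathcal F$ of all weight $2$ newforms with level in $\mathcal N$. For a Type I sequence $N = QM$ with $M$ a fixed squarefree integer, the trace formula for $\tr_{S_2^\new(N)} T_\ell W_Q$ from \cref{prop:Tl-large} has the same structural shape as in Zubrilina's case $M = 1$, with the $M$-dependence entering only through explicit multiplicative local factors. Adapting her harmonic-analytic treatment of the elliptic and class-number terms should produce a murmuration function $M^{\mathcal Q}_{\mathcal F}(x; \beta)$ continuous on $[0, \infty) \times (1, \infty)$; this is the Type I case of \cref{conj:WQ-murm}, for which \cref{thm:WQ-murm} already covers the initial range $x < 1/(4M)$.

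Next, I would compare the elliptic curve and modular form averages. Elliptic curves correspond via Eichler--Shimura to rational newforms in $S_2^\new(N)$ and form a very sparse subset, with $\# \mathcal E(X, \beta X) = O(X^{5/6 + \eps})$ conjecturally, whereas $\# \mathcal F(X, \beta X)$ grows polynomially of larger order.  The goal is to show that, after smoothing, $\tilde A^{\mathcal Q, \delta}_{\mathcal E}(\ell, X; \beta)$ converges to the same function (up to a universal normalization) as the smoothed modular form average.  The mechanism is horizontal Sato--Tate and its family variants: the distribution of $\ell^{-1/2} a_\ell(E)$ across $\mathcal E$ matches that of $\ell^{-1/2} a_\ell(f)$ across non-rational newforms, and the $w_Q$-eigenvalue weighting induces coherent cancellations in both families; the empirical closeness in \cref{fig:EC2M,fig:EC2M-sm} is strong evidence for this.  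The $\delta$-smoothing with $\delta < 1$ then washes out arithmetic oscillations from individual elliptic curves that are absent in the modular form setting (where averaging over many non-rational newforms itself supplies effective smoothing).  Concretely, one would estimate the second moment of $\sum_{\ell \le \ell' < \ell + \ell^\delta}(\tilde A_{\mathcal E}^{\mathcal Q} - \tilde A_{\mathcal F}^{\mathcal Q})(\ell',X;\beta)$ and choose $\delta$ just below $1$ so that this discrepancy is $o(1)$.

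The main obstacle is the comparison step.  A priori there is no mechanism forcing the sparse rational subfamily to reproduce the same scale-invariant limit as the full family of newforms; effective counts of elliptic curves by conductor remain conjectural (Watkins), and a genuinely different limit cannot immediately be ruled out.  Overcoming this most likely requires either an effective vertical Sato--Tate for elliptic curves with prescribed local root numbers, or a direct approach parametrizing $\mathcal E$ by Weierstrass coefficients in the spirit of \cite{HLOP}, combined with equidistribution techniques for short character sums attached to cubic forms.
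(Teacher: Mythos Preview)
The statement you are attempting to prove is \cref{conj:EC-murm}, which is a \emph{conjecture} in the paper, not a theorem.  The paper offers no proof; it presents the conjecture on the basis of the numerical plots in \cref{fig:EC2M,fig:EC2M-sm} and explicitly leaves it open.  There is therefore nothing to compare your proposal against, and any purported proof would be a new result.

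Your proposal is not a proof but a research outline, and you yourself identify the fatal gap: the comparison step between the sparse family $\mathcal E$ of rational newforms and the full family $\mathcal F$ of weight~$2$ newforms.  There is no known mechanism---Sato--Tate, second-moment, or otherwise---that forces these two families to share the same murmuration limit, and in fact the paper's own remarks just before \cref{conj:EC-murm} indicate they should \emph{not} behave identically: Sutherland's calculations show no murmurations for elliptic curves in the Type~II setting even though \cref{conj:WQ-murm}(2) predicts them for $\mathcal F$.  So reducing $\mathcal E$ to $\mathcal F$ is not merely hard but likely wrong as a strategy.  You also misread \cref{fig:EC2M,fig:EC2M-sm}: the blue and red curves there compare the $w_Q$-weighted and $w_N$-weighted averages \emph{both for elliptic curves}, not elliptic curves versus modular forms, so they give no evidence for the comparison you want.

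In short: the paper has no proof, your proposal has a self-acknowledged gap at its core, and the reduction you propose is probably not even true.
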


As in the Type II and III cases for modular forms, it is not clear whether the smoothing in \cref{conj:EC-murm} should be needed (even in the global root number case of $Q=N$).

\subsection{Additional remarks}
We checked the results stated in the introduction, as well as many of our formulas below, numerically in Sage \cite{sage} for a wide variety of small parameters.

\subsection*{Acknowledgements}
I thank John Bergdall, Alex Cowan, Nina Zubrilina, and the anonymous referee for some helpful comments and discussions.
Some of the computing for this project was performed at the OU Supercomputing Center for Education \& Research (OSCER) at the University of Oklahoma (OU). 


\section{Notation and preliminaries}


\subsection{Class numbers}

Let $\Delta = \lambda^2 \Delta_0$, where $\Delta_0$ is a negative fundamental discriminant.  Let $h'(\Delta)$ be the weighted class number for primitive binary negative definite quadratic forms of discriminant $\Delta$.  Explicitly, $h'(-3) = \frac 13$, $h'(-4) = \frac 12$, and $h'(\Delta_0) = h(\Delta_0)$ is the usual class number if $\Delta_0 < -4$.
Moreover, 
\begin{equation} h'(\lambda^2 \Delta_0) = \gamma_{\Delta_0}(\lambda) h'(\Delta_0),
\quad \gamma_{\Delta_0}(\lambda) = \sum_{d \mid \lambda} \mu(d) {\Delta_0 \leg d} \frac \lambda d.
\end{equation}
Since $\gamma_{\Delta_0}$ is a Dirichlet convolution of multiplicative functions, it is also multiplicative, and given on (nontrivial) prime powers by
\[ \gamma_{\Delta_0}(p^m) = 
p^{m-1} \left( p - {\Delta_0 \leg p} \right). \]

Let $\Delta \le 0$ and $t \ge 1$.  Let $H_1(\Delta) = H(\Delta)$ be the Hurwitz class number.  This is defined by
\[ H(\Delta) = \sum_{d \mid \lambda} h'(d^2 \Delta_0), \]
where we write $\Delta = \lambda^2 \Delta_0$ for a fundamental discriminant $\Delta_0$.
One deduces (e.g., \cite[(2.3)]{me:rootno}) that
\begin{equation} \label{eq:Hlambda2}
 H(\lambda^2 \Delta_0) =  \eta_{\Delta_0}(\lambda) h'(\Delta_0), \quad
\eta_{\Delta_0}(\lambda) = \sum_{d \mid \lambda} \mu(d) {\Delta_0 \leg d} \sigma(\lambda/d).
\end{equation}
Just like $\gamma_{\Delta_0}$, we have that $\eta_{\Delta_0}$ is multiplicative and it is given on prime powers by
\[ \eta_{\Delta_0}(p^m) = 
 \sigma(p^m) - {\Delta_0 \leg p} \sigma(p^{m-1}). \]

For $t \ge 2$, write $(t,\Delta) = a^2 b$ where $b$ is squarefree, and put $\Delta' = \Delta/(t,\Delta)$, $t' = t/(t,\Delta)$.  Set
\[ H_t(\Delta) =
\begin{cases} (t,\Delta) {\Delta'/b \leg t'} H(\Delta'/b) & \text{if } b \mid \Delta', \\
0 & \text{else}.
\end{cases} \]


\subsection{Other quantities arising in the trace formula}
Fix $s \in \Z$ and $\ell \ge 1$.  Let $\rho, \bar \rho$ denote the roots of $x^2 - s X + \ell$.  Define
\[ p_k(s,\ell) =
\begin{cases}
\frac{\rho^{k-1} - \bar \rho^{k-1}}{\rho - \bar \rho} & \text{if }s^2 \ne 4 \ell, \\
(k-1) ( \frac s 2 )^{k-2} &\text{if } s^2 = 4 \ell.
\end{cases} \]
In particular, when $s=0$, the roots $\rho, \bar \rho$ are $\pm \sqrt{-\ell}$, and 
\[ p_k(0,\ell) = (-\ell)^{\frac k2 - 1}. \]
We also remark that
\[ p_2(s,\ell) = 1 \]
and
\[ p_k(s, \ell) = \ell^{\frac k2 - 1} U_{k-2}(\frac s{2 \sqrt \ell}) , \] 
where $U_k(t)$ denotes the Chebyshev polynomial of the second kind.

Let $Q(n)$ be the greatest integer such that $Q(n)^2 \mid n$.


\section{Traces on newspaces} \label{sec:tnew}


Fix an even weight $k \ge 2$,  a prime $q$ and positive integers 
$\ell, M$.  Assume $q, \ell, M$ are pairwise coprime.  
The Atkin--Lehner operator $W_q$, defined as in \cite{AL},
acts on $S_k(N)$.  This action is taken to be the trivial action when $(q,N) = 1$.

For an integer $r \ge 0$, set
\[ t(r; M) = \tr_{S_k(q^r M)} T_\ell W_q, \quad
 t^\new(r; M)  =  \tr_{S_k^\new(q^r M)} T_\ell W_q. \]
 (Throughout the analysis in this section, $q, \ell, k$ will be fixed, so we suppress them from our notation for brevity.)
 For $r < 0$, we interpret these quantities to be $0$.  When $\ell = 1$,
 $t^\new(r;M) = \Delta_k(q^r, M)$.

Fix a newform $g \in S_k(q^{r_0}M_0)$.  Denote by $w_q(g)$ the eigenvalue for $g$ under the action of $W_q$ on $S_k(q^{r_0}M_0)$.
Let $N = q^r M$, and assume that $r_0 \le r$ and
$M_0 \mid M$.  Let $\pi_g^N$ denote the subspace of $S_k(N)$ spanned by forms $g(dz)$ where $d \mid q^{r-r_0} M_0^{-1} M$.  Then $T_\ell$ acts by a scalar on $\pi_g^N$.
One computes the trace of $W_q$ on $\pi_g^N$ from \cite[(5.1)--(5.2)]{AL}, which yields
\[ \tr_{\pi_g^N} T_\ell W_q = 
\begin{cases}
\sigma_0(M/M_0) a_\ell(g) w_q(g) &\text{if } r \equiv r_0 \mod 2, \\
0 & \text{else}.
\end{cases} \]

Thus
\[ t(r; M) = \sum_{\substack{r_0 \le r \\
r_0 \equiv r \mod 2}} \sum_{M_0 \mid M} \sigma_0(M/M_0) t^\new(r_0; M_0). \]
Hence
\[  t(r; M) - t(r-2; M) =  \sum_{M_0 \mid M} \sigma_0(M/M_0) t^\new(r; M_0). \]

Since $\sigma_0 = 1 * 1$ (where $*$ denotes Dirichlet convolution) and the Dirichlet inverse of the constant function $1$ is $\mu$, the Dirichlet inverse of $\sigma_0$ is $\mu * \mu$, which is the multiplicative function defined by
\[ (\mu * \mu)(p^m) = \begin{cases}
-2 & \text{if }m = 1, \\
1 & \text{if }m = 2, \\
0 & \text{if }m \ge 3.
\end{cases} \]
Thus 
\[ t^\new(r,M) = \sum_{d \mid M} (\mu * \mu) (d) \left( t(r; M/d) - t(r-2; M/d) \right). \]

Reorganizing the trace formula from \cite[(2.7)]{SZ}, we see that
\begin{equation}
t(r; M) = A_{1,0}(r; M) - \delta_{r \ge 2} \, A_{1,1}(r-2; M) + A_2(r; M) + A_3,
\end{equation}
where
\[ A_{1,\eps}(r;M) = -\frac 12 \sum_{\substack{s^2 \le 4 q^r  \ell \\ q^{r+\eps} \mid s}} p_k(q^{-r/2} s, \ell) \sum_{\substack{t \mid M \\ M/t \text{ squarefree}}}  H_t(s^2 - 4 q^r \ell ), \]
\[ A_2(r; M) = - \frac 12 \delta_{r \text{ even}} \, \phi(q^{r/2}) \sum_{\substack{\ell' \mid \ell \\ q^{r/2} \mid (\ell' + \ell/\ell')}} \min (\ell', \ell/\ell')^{k-1} \sum_{\substack{t \mid M \\ M/t \text{ squarefree}}}
 (Q(t), (\ell' - \ell/\ell')),  \]
 and
 \[ A_3 = A_3(r; M) = \delta_{k = 2} \, \sigma(\ell). \]
 Here and below the index $s$ lies in $\Z$, whereas $t, \ell' \in \Z_{> 0}$.

Hence
\begin{equation*} t(r; M) - t(r-2; M) = 
\begin{cases}
A_{1,0}(r;M) + A_2(r; M) + A_3 & \text{if }r = 0, 1, \\
A_{1,0}(r;M) - A_{1,0}(r-2;M) - A_{1,1}(r-2;M) \\
\quad {} +  \delta_{r \ge 4} A_{1,1}(r-4;M)  + A_2(r;M) - A_2(r-2;M) & \text{if } r \ge 2. \\ 
\end{cases}
\end{equation*}

To compute $t^\new(r;M)$, we want to compute the quantities
\begin{equation} \tilde A_\star(r; M) := \sum_{d \mid M} (\mu * \mu)(d) A_\star(r;M/d) = ((\mu * \mu) * A_{\star}(r; \bigdot )) (M),
\end{equation}
where $\star$ is any index.

\begin{prop} With notation as above, we have
\begin{equation}
\label{eq:tnewcases}
 t^\new(r;M) =
\begin{cases}
\tilde A_{1,0}(r;M) +\tilde  A_2(r; M) + \tilde A_3 & \text{if } r = 0, 1, \\
\tilde A_{1,0}(r;M) - \tilde A_{1,0}(r-2;M) - \tilde A_{1,1}(r-2;M) \\
\quad {} + \delta_{r \ge 4} \tilde A_{1,1}(r-4;M) + \tilde A_2(r;M) - \tilde A_2(r-2;M) & \text{if } r \ge 2. \\ 
\end{cases}
\end{equation}
\end{prop}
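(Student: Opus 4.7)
The proof is essentially a direct assembly of the pieces already set up in the preceding paragraphs, so the plan is really a matter of organizing the manipulations and verifying that nothing stray appears.

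First I would observe that the relation $t(r;M)=\sum_{r_0\le r,\,r_0\equiv r\bmod 2}\sum_{M_0\mid M}\sigma_0(M/M_0)\,t^{\new}(r_0;M_0)$ already derived in the excerpt telescopes upon taking the difference $t(r;M)-t(r-2;M)$, since the outer sum only changes by the single term $r_0=r$ (using the convention $t^{\new}(r_0;M_0)=0$ for $r_0<0$). Read as a Dirichlet convolution in the multiplicative variable $M$, this says $t(r;\,\cdot\,)-t(r-2;\,\cdot\,)=\sigma_0*t^{\new}(r;\,\cdot\,)$. Since the excerpt has already identified the Dirichlet inverse of $\sigma_0$ as $\mu*\mu$, convolving both sides with $\mu*\mu$ yields
\[ t^{\new}(r;M)=\bigl((\mu*\mu)*[t(r;\,\cdot\,)-t(r-2;\,\cdot\,)]\bigr)(M), \]
which is the starting identity appearing just before the proposition statement.

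Next I would substitute the SZ-derived trace formula $t(r;M)=A_{1,0}(r;M)-\delta_{r\ge 2}A_{1,1}(r-2;M)+A_2(r;M)+A_3$ into $t(r;M)-t(r-2;M)$. Here one splits into the two cases dictated by the boundary behaviour of $r-2$: when $r=0$ or $r=1$, the term $t(r-2;M)$ vanishes by convention and only $A_{1,0}(r;M)+A_2(r;M)+A_3$ remains; when $r\ge 2$, term-by-term subtraction gives $A_{1,0}(r;M)-A_{1,0}(r-2;M)$ from the $A_{1,0}$ contributions, the combination $-A_{1,1}(r-2;M)+\delta_{r\ge 4}A_{1,1}(r-4;M)$ from the $A_{1,1}$ contributions (the indicator $\delta_{r\ge 4}$ being what survives of $\delta_{(r-2)\ge 2}$), and $A_2(r;M)-A_2(r-2;M)$ from the $A_2$ contributions, while the $A_3$ terms cancel because $A_3$ does not depend on $r$.

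Finally I would apply $(\mu*\mu)*$ to each of these linearly occurring terms and use the defining identity $\tilde A_\star(r;M)=((\mu*\mu)*A_\star(r;\,\cdot\,))(M)$. For the $A_3$ piece in the $r\in\{0,1\}$ case, convolving the constant-in-$M$ quantity $A_3$ with $\mu*\mu$ produces $\tilde A_3$ as defined. Rearranging gives exactly \eqref{eq:tnewcases}. The one point to double-check is the boundary term: for $r=1$ one should verify that the $A_2$ contribution $\tilde A_2(r;M)$ matches the formula for $r=1$ even though $A_2$ is supported on even $r$ (so $\tilde A_2(1;M)=0$), and that the $\tilde A_{1,1}$ term does not appear in the $r\in\{0,1\}$ case because the original $\delta_{r\ge 2}$ kills it. There is no genuine obstacle here; the entire proof is a bookkeeping exercise in combining the Dirichlet-inversion step with the case split for the telescoped trace formula, and the main thing to watch is the correct handling of the boundary conventions at $r<2$ and $r<4$.
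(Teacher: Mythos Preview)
Your proposal is correct and follows essentially the same approach as the paper: the paper derives all of the ingredients (the telescoped difference $t(r;M)-t(r-2;M)$, the Dirichlet inversion via $\mu*\mu$, the Skoruppa--Zagier trace formula, and the case split) in the paragraphs immediately preceding the proposition, and then simply states the proposition as the evident consequence without a separate proof block. Your write-up just makes explicit the final linear substitution and boundary checks that the paper leaves implicit.
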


In the remainder of this section, we compute the quantities $\tilde A_\star(r; M)$ in the main situations of interest for us.

\subsection{$A_{1,\eps}$ sums} Let $\eps \in \{ 0, 1 \}$.

\subsubsection{Computing $\tilde A_{1,\eps}(r;M)$ when $4 \ell < q^{r+2\eps}$} \label{sec:A1a}
Suppose $4 \ell < q^{r+2\eps}$.  Then only the $s=0$ term occurs in the outer sum for $A_{1,\eps}(r;M)$, and we have
\[ A_{1,\eps}(r; M) = - \frac 12 (-\ell)^{\frac k2 - 1} \sum_{\substack{t \mid M \\ M/t \text{ squarefree}}} H_t(-4 q^r \ell). \]
We assume $(M, q\ell) = 1$, so for $t \mid M$ we have $(t, -4 q^r \ell) = 1$ unless $t$ is even, in which case 
$(t, -4 q^r \ell) = 2$ or $4$.  Hence
\begin{equation}
\label{eq:Htcases}
 H_t(-4 q^r \ell) = 
\begin{cases}
{ - q^r \ell \leg t} H(-4 q^r \ell) &\text{if } t \text{ odd}, \\
2{ - q^r \ell \leg t/2} H(-q^r \ell) & \text{if }t \equiv 2 \mod 4, \\
4{ - q^r \ell \leg t/4} H(-q^r \ell) & \text{if }t \equiv 0 \mod 4.
\end{cases} 
\end{equation}

In the following analysis, we will assume $\ell$ is squarefree.  Write $-4q^r \ell = \lambda^2 \Delta_0$, where $\Delta_0$ is a fundamental discriminant.  We can rewrite $H(-4q^r \ell)$ and $H(-q^r \ell)$ in terms of $h'(\Delta_0)$ on a case-by-case basis as in \cref{tab1}.
We will use this to calculate $\tilde A_1$ in cases.

The following computation will be useful.  First, for an integer $\Delta$, note that
${\Delta \leg \bigdot} * |\mu|$ is the multiplicative function given on nontrivial prime powers by
\[ ({\Delta \leg \bigdot} * |\mu|) (p^m) =
\begin{cases}
1 + {\Delta \leg p} & \text{if }p \nmid \Delta \text{ or } m = 1, \\
0 & \text{if } p \mid \Delta \text{ and } m=1.
\end{cases} \]
Consequently $\kappa_\Delta := (\mu * \mu) * ({\Delta \leg \bigdot} * |\mu|)$ is the multiplicative function given on nontrivial prime powers by
\begin{equation} \label{eq:kappa-def}
 \kappa_\Delta(p^m) =
\begin{cases}
{\Delta \leg p} - 1 & \text{if } m = 1, \\
-{\Delta \leg p}  & \text{if }p \nmid \Delta \text{ and } m = 2,  \\
-1 & \text{if }p \mid \Delta \text{ and } m = 2,  \\
1 & \text{if } p \mid \Delta \text{ and } m = 3,  \\
0 & \text{else}.
\end{cases}
\end{equation}

\begin{table}
\renewcommand{\arraystretch}{1.333}
\begin{tabular}{l | C | C | C | C }
& \Delta_0 & \lambda & H(-4q^r \ell) & H(-q^r \ell) \\
\hline
$r$ even, $\ell \not \equiv 3 \mod 4$ & -4 \ell & q^{r/2} & \eta_{-4\ell}( q^{r/2}) h'(-4\ell) & 0 \\
$r$ even, $\ell  \equiv 3 \mod 4$ & - \ell & 2q^{r/2} & \eta_{-\ell}(2q^{r/2}) h'(-\ell) & \eta_{-\ell}(q^{r/2}) h'(-\ell) \\
$r$ odd, $q \ell \not \equiv 3 \mod 4$ & -4 q\ell & q^{(r-1)/2} & \eta_{-4q\ell}( q^{(r-1)/2}) h'(-4q\ell)& 0 \\
$r$ odd, $q \ell  \equiv 3 \mod 4$ & -q\ell & 2q^{(r-1)/2} &  \eta_{-q\ell}(2q^{(r-1)/2}) h'(-q \ell)& \eta_{-q\ell}(q^{(r-1)/2}) h'(-q \ell) \\
\end{tabular}
\caption{Hurwitz class numbers by case}
\label{tab1}
\renewcommand{\arraystretch}{1}
\end{table}

When $M$ is odd, this is simple: $H_t(-4 q^r \ell)$ will be given by the first case of
\eqref{eq:Htcases}.  From this we see
\[ A_{1,\eps}(r; \bigdot) = - \frac 12 (-\ell)^{\frac k2 - 1} \, H(-4 q^r \ell) \cdot {-q^r \ell \leg \bigdot} * |\mu| \qquad (M \text{ odd}) \]
Therefore 
\[ \tilde A_{1,\eps}(r; M) = - \frac 12 (-\ell)^{\frac k2 - 1} \, H(-4 q^r \ell) \, \kappa_{-q^r \ell}
\qquad (M \text{ odd}). \]

For general $M$, we will evaluate $\tilde A_{1,\eps}$ by separating it into 3 sums as follows:
\begin{multline*}
 \tilde A_{1,\eps}(r; M) = \sum_{\substack{d \mid M \\ d \text{ odd}}} (\mu * \mu)(d) A_{1,\eps}(r;M/d) \\
+ \sum_{\substack{d \mid M \\ d \equiv 2 \mod 4}} (\mu * \mu)(d) A_{1,\eps}(r;M/d) 
+ \sum_{\substack{d \mid M \\ d \equiv 4 \mod 8}} (\mu * \mu)(d) A_{1,\eps}(r;M/d) 
\end{multline*}
We will also separate the sum in $A_{1,\eps}$ over $t$ according to $v_2(t)$.  Write $M = 2^e M'$ where $M'$ is odd.  Then
\[ A_{1,\eps}(r; M) = - \frac 12 (-\ell)^{\frac k2 - 1} \sum_{\substack{t \mid M' \\ M'/t \text{ squarefree}}} \left( H_{2^{e}t}(-4 q^r \ell)  + H_{2^{e-1}t}(-4 q^r \ell) \right), \]
where we interpret $H_t$ to be $0$ if $t \not \in \Z$.
Hence
\[  \tilde A_{1,\eps}(r; M) = - \frac 12 (-\ell)^{\frac k2 - 1} 
\sum_{j=0}^2 \sum_{d \mid M'} (\mu * \mu)(2^j d) 
\sum_{\substack{t \mid (M'/d) \\ M'/dt \text{ squarefree}}} \left( H_{2^{e-j}t}(-4 q^r \ell)  + H_{2^{e-j-1}t}(-4 q^r \ell) \right). \]

Rewriting \eqref{eq:Htcases} with index $2^e t$ for $t$ odd, we have
\begin{equation*}
 H_{2^e t}(-4 q^r \ell) = 
\begin{cases}
{ - q^r \ell \leg t} H(-4 q^r \ell) & \text{if }e = 0, \\
2{ - q^r \ell \leg t} H(-q^r \ell) & \text{if }e = 1, \\
4{ - q^r \ell \leg t} { - q^r \ell \leg 2}^{e-2} H(-q^r \ell) & \text{if }e \ge 2.
\end{cases} 
\end{equation*}
Applying this to the previous formula yields
\begin{equation} \label{eq:AA1-lsmall}
\tilde A_{1,\eps}(r; M) = - \frac 12 (-\ell)^{\frac k2 - 1} \alpha_1(-q^r \ell; e) 
\kappa_{-q^r \ell}(M'), \quad M = 2^e M' \text{ with } M' \text{ odd},
\end{equation}
where
\[ \alpha_1(-q^r\ell; e) =
\begin{cases}
H(-4q^r \ell) & \text{if }e = 0, \\
2H(-q^r \ell) - H(-4 q^r \ell) & \text{if }e = 1, 2, \\
\left( 4 {-q^r \ell \leg 2} - 6 \right) H(-q^r \ell) + H(-4 q^r \ell) & \text{if }e = 3, \\
\left( 2 - 4 {-q^r \ell \leg 2} \right) H(-q^r \ell) & \text{if }e = 4, \\
0 & \text{if }e \ge 5.
\end{cases} \]

\subsubsection{Computing $\tilde A_{1,0}(r;M)$ when $\ell = 1$}
Assume $\ell = 1$.  Then \cref{sec:A1a} computes $\tilde A_{1,0}(r; M)$ in all cases except when $q^r \le 4$, where there are terms with $s \ne 0$ in $A_{1,0}(r; \bigdot)$.  Namely, when $q^r = 1$ there are terms for $s = 0, \pm 1, \pm 2$, and 
when $q^r \in \{ 2, 3, 4 \}$ there are terms for $s = 0, \pm q^r$. 

Note that
\[ p_k(\pm 1, 1) = \frac{\zeta_6^{k-1} + \zeta_3^{k-1}}{\zeta_6+\zeta_3} =
\begin{cases}
-1 & \text{if }k \equiv 0 \mod 6, \\
1 & \text{if }k \equiv 2 \mod 6, \\
0 & \text{if }k \equiv 4 \mod 6;
\end{cases} \]
\[ p_k(\pm 2, 1) = k-1; \]
\[ p_k(\pm \sqrt 2, 1) =  
\begin{cases}
-1 & \text{if }k \equiv 0, 6 \mod 8, \\
1 & \text{if }k \equiv 2, 4 \mod 8;
\end{cases} \]
and
\[ p_k(\pm \sqrt 3, 1) =  \begin{cases}
-1 & \text{if }k \equiv 0, 8 \mod 12, \\
1 & \text{if }k \equiv 2, 6 \mod 12, \\
2 & \text{if }k \equiv 4 \mod 12, \\
-2 & \text{if }k \equiv 10 \mod 12.
\end{cases} \]



\medskip
\noindent
\underline{Case 1: $q^r = 1$}

Suppose $r=0$.  Note that
\begin{align*} A_{1,0}(0; M) &= \frac 12 \sum_{\substack{t \mid M \\ M/t \text{ squarefree}}} 
\left( (-1)^{\frac k2} H_t(-4) - 2 p_k(1,1) H_t(-3) - 2(k-1) H_t(0) \right) \\
&= \frac 1{12} \sum_{\substack{t \mid M \\ M/t \text{ squarefree}}} 
\left( 3(-1)^{\frac k2} {-4 \leg t}  - 4 p_k(1,1) {-3 \leg t} + (k-1)t \right).
\end{align*}

Hence
\begin{equation} \tilde A_{1,0}(0; M) =  \frac 1{12} \left( 3(-1)^{\frac k2} \kappa_{-4}(M) -
4 p_k(1,1) \kappa_{-3}(M) + (k-1) \kappa_\infty(M) \right), 
\end{equation}
where $\kappa_\infty = (\mu * \mu) * (\textrm{id} * |\mu|)$, which is the multiplicative function given by
\begin{equation} \label{eq:kapinfty}
 \kappa_\infty(p^m) = 
\begin{cases}
p-1 & \text{if }m = 1, \\
p^2 - p - 1 & \text{if }m = 2, \\
p^{m-3} (p-1)^2 (p+1) & \text{if }m \ge 3. 
\end{cases} 
\end{equation}

\medskip
\noindent
\underline{Case 2: $q^r = 2$}

Suppose $q=2$ and $r=1$.  Then
\[ A_{1,0}(1; M) = \sum_{\substack{t \mid M \\ M/t \text{ squarefree}}} 
\left( \frac{(-1)^{\frac k2}}2 H_t(-8) -  p_k(\sqrt 2,1) H_t(-4) \right). \]
By assumption $t \mid M$ implies $t$ is odd, so
\begin{equation} \tilde A_{1,0}(1; M) =  \frac 1{2} \left( (-1)^{\frac k2} \kappa_{-2}(M) -  p_k(\sqrt 2,1) \kappa_{-1}(M) \right). 
\end{equation}

\medskip
\noindent
\underline{Case 3: $q^r = 3$}

Suppose $q=3$ and $r=1$.  Then
\[ A_{1,0}(1; M) = \sum_{\substack{t \mid M \\ M/t \text{ squarefree}}} 
\left( \frac{(-1)^{\frac k2}}2 H_t(-12) -  p_k(\sqrt 3,1) H_t(-3) \right). \]
Hence by the same argument as above, we have
\begin{equation}
\tilde A_{1,0}(1; M) =  \left( \frac{(-1)^{\frac k2}}2 \alpha_1(-3;e) \kappa_{-3}(M') -  \frac 13 \, p_k(\sqrt 3,1) \kappa_{-3}(M) \right),
\end{equation}
where $M = 2^e M'$ with $M'$ odd.  We note that
$\alpha_1(-3; e) = \frac 43, -\frac 23, -2, 2, 0$ for $e = 0$, $e= 1,2$,
$e = 3$, $e=4$ and $e\ge 5$, respectively.

\medskip
\noindent
\underline{Case 4: $q^r = 4$}

Suppose $q=2$ and $r=2$.  Then
\begin{align*} A_{1,0}(2; M) &= \sum_{\substack{t \mid M \\ M/t \text{ squarefree}}} 
\left( \frac{(-1)^{\frac k2}}2 H_t(-16) -  (k-1) H_t(0) \right) \\
&= \frac 1{12} \sum_{\substack{t \mid M \\ M/t \text{ squarefree}}} 
\left( 9 {(-1)^{\frac k2}} {-1 \leg t} +  (k-1) t \right),
\end{align*}
whence
\begin{equation}
\tilde A_{1,0}(2; M) = \frac 1{12}  
\left( 9 {(-1)^{\frac k2}} \kappa_{-1}(M) +  (k-1) \kappa_\infty(M) \right).
\end{equation}

\subsubsection{Computing $\tilde A_{1,1}(r;M)$ when $\ell = 1$}
Assume $\ell = 1$.  Then \cref{sec:A1a} computes $\tilde A_{1,1}(r;M)$ except in the case that $q=2$ and $r=0$, so suppose this.  One sees
\begin{align*} A_{1,1}(0; M) &= \sum_{\substack{t \mid M \\ M/t \text{ squarefree}}} 
\left( \frac{(-1)^{\frac k2}}2 H_t(-4) -  (k-1) H_t(0) \right) \\
&= \frac 1{12} \sum_{\substack{t \mid M \\ M/t \text{ squarefree}}} 
\left( 3 {(-1)^{\frac k2}} {-1 \leg t} +  (k-1) t \right).
\end{align*}
This implies
\begin{equation}
\tilde A_{1,1}(0; M) = \frac 1{12}  
\left( 3 {(-1)^{\frac k2}} \kappa_{-1}(M) +  (k-1) \kappa_\infty(M) \right).
\end{equation}

\subsection{$A_2$ sums when $\ell = 1$}

Suppose $\ell = 1$.  Then
\[ A_2(r; M) = - \frac 12 \delta_{r \text{ even}} \, \delta_{q^r \mid 4} \,  \sum_{\substack{t \mid M \\ M/t \text{ squarefree}}} Q(t).  \]
The sum on the right is in fact $(Q * |\mu|)(M)$, which is a multiplicative function of $M$.  Consequently,
\[ \alpha_2 = (\mu * \mu) * (Q * | \mu |) \]
is a multiplicative function, and one can check that it is given on (nontrivial) prime powers by
\[ \alpha_2(p^m) = 
\begin{cases}
0 & \text{if }m \text{ is odd}, \\
p-2 & \text{if }m = 2,  \\
p^{\frac{m-4}2}(p-1)^2 & \text{if }m \ge 4 \text{ is even}.
\end{cases} \]

Hence, when $\ell = 1$,
\begin{equation}
\tilde A_2(r;M) =
\begin{cases}
- \frac 12 \alpha_2(M) & \text{if } q^r \in \{ 1, 4 \} \text{ and } M \in \square, \\
0 & \text{else}.
\end{cases}
\end{equation}

\subsection{$A_3$ sums}
Since $A_3(r;M) = \delta_{k=2} \, \sigma(\ell)$ is independent of $r$ and $M$,
\[ (\mu * \mu) * A_3(r; \bigdot) = \delta_{k=2} \, \sigma(\ell) ( \mu * \mu * 1)
= \delta_{k=2} \, \sigma(\ell) \cdot \mu. \]
In other words, 
\begin{equation} \label{eq:AA3}
\tilde A_3 = \tilde A_3(r;M) = \delta_{k=2} \, \mu(M) \sigma(\ell).
\end{equation}


\section{Dimension formulas} \label{sec:dim}


Let $q, r, M, k$ be as in the previous section.
Here we put together our calculations of $\tilde A_\star(r;M)$ when $\ell = 1$ to compute
\[  \Delta_k(q^r, M) = t^\new(r;M) = \dim S_k^\new(q^r M)^{+_q} - \dim S_k^\new(q^r M)^{-_q} \]
Since one knows a formula for $\dim S_k^\new(q^r M) =  \dim S_k^\new(q^r M)^{+_q} + \dim S_k^\new(q^r M)^{-_q}$ (\cite{martin}), this will imply a formula for
\[ \dim S_k^\new(q^r M)^{\pm_q} = \pm \frac 12 \left( \dim S_k^\new(q^r M) \pm \Delta_k(q^r,M) \right). \]

We will use the following explicit calculations of $\kappa_\Delta$ and $\alpha_1$.

Since $(-q^r, M) = 1$, we have
\[ \kappa_{-q^r}(M) = \begin{cases}
\prod_{p^2 \parallel M} \left(- {-q^r \leg p} \right) \prod_{p \parallel M}
\left( {-q^r \leg p} - 1 \right) & \text{if }M \text{ is cubefree}, \\
0 & \text{if }p^3 \mid M \text{ for some } p.
\end{cases}
\]
In particular $\kappa_{-q^r}(M) = 0$ if and only if
(i) ${-q^r \leg p} = 1$ for some $p \parallel M$; or
(ii) $v_p(M) \ge 3$ for some $p$.
Assuming neither (i) nor (ii) hold, then $\kappa_{-q^r}(M) = (-1)^{\omega_2(-q^r; M)} (-2)^{\omega_1(M)}$,
where $\omega_1(M)$ is the number of primes sharply dividing $M$
and $\omega_2(n; M)$ is the number of $p^2 \parallel M$ such that
${n \leg p} = 1$.

We also tabulate the values of $\alpha_1(-q^r; e)$ by cases in \cref{tab:alpha1}.
These calculations use the fact that $H(-4q^r) = (3-{-q \leg 2}) H(-q^r)$ when $-q^r \equiv 1 \mod 4$.  In particular, one sees that for $q$ odd, $\alpha_1(-q^r; e) = 0$ if and only if
(i)  $q^r \equiv 1 \mod 4$ and $e \ge 4$; (ii)  $q \equiv 3 \mod 8$,  $r$ is odd and $e \ge 5$; or (iii) $q \equiv 7 \mod 8$,  $r$ is odd and $e > 0$.  We note that $\alpha_1(-q^r; e) \le 0$ if (a) $e \ne 1, 2$ or (b) $e=3$, $r$ is odd, and $q \equiv 7 \mod 8$.  Otherwise $\alpha_1(-q^r; e) \ge 0$.

\begin{table}
\renewcommand{\arraystretch}{1.333}
\begin{tabular}{C | C | C | C | C }
e & q^r \equiv 1 \mod 4 & q^r \equiv 3 \mod 4 & q=2  \\
\hline
0 & H(-4q^r) & (3-{-q \leg 2}) H(-q^r) & H(-2^{r+2}) \\
1, 2 &  -H(-4q^r) & \left({-q \leg 2} - 1 \right) H(-q^r) & \text{---} \\
3 & H(-4q^r) &  3\left({-q \leg 2} - 1 \right) H(-q^r)  & \text{---}\\
4 &  0 & 2\left(1 - 2{-q \leg 2} \right) H(-q^r) & \text{---} \\
\ge 5 & 0 & 0 & \text{---}
\end{tabular}
\caption{Computing $\alpha_1(-q^r;e)$ by cases}
\label{tab:alpha1}
\renewcommand{\arraystretch}{1}
\end{table}

\subsection{Dimensions for $r = 1$}
First suppose $r=1$.  Then $\Delta_k(q,M) = \tilde A_{1,0}(r;M) + \tilde A_3$ by \eqref{eq:tnewcases}.

When $q \ge 5$,
\[ \Delta_k(q,M) =
 \frac 12 (-1)^{\frac k2} \alpha_1(-q; e) 
\kappa_{-q}(M') + \delta_{k=2}\mu(M), \quad M = 2^e M' \text{ with } M' \text{ odd}. \]

Otherwise
\[ \Delta_k(q,M) = 
\begin{cases}
\frac 1{2} \left( (-1)^{\frac k2} \kappa_{-2}(M) -  p_k(\sqrt 2,1) \kappa_{-1}(M) \right) + \delta_{k=2}\mu(M) & \text{if } q = 2, \\
 \left( \frac{(-1)^{\frac k2}}2 \alpha_1(-3;e) \kappa_{-3}(M') -  \frac 13 \, p_k(\sqrt 3,1) \kappa_{-3}(M) \right)
+ \delta_{k=2}\mu(M) & \text{if } q = 3.
\end{cases} \]

Using these formulas, we can derive a precise elementary characterization of when the Atkin--Lehner signs at $q$ are perfectly equidistributed.

\begin{prop} \label{prop:req1}
Let $q$ be a prime, $M \ge 1$ be coprime to $q$.  Write $M=2^e M'$, where $M'$ is odd.

\begin{enumerate}
\item Suppose $q \ge 5$ and either $k \ge 4$ or $M$ is not squarefree.  Then
$\Delta_k(q,M) = 0$ if and only if 
(i) $M'$ is not cubefree; (ii) ${-q \leg p} = 1$ for some $p \parallel M'$; 
(iii) $16 \mid M$ and $q \equiv 1 \mod 4$; (iv) $32 \mid M$ and $q \equiv 3 \mod 8$;
or (v) $v_2(M) \ne 0, 4$ and $q \equiv 7 \mod 8$.

\item Suppose $q \ge 5$, $k=2$ and $M$ is squarefree.
Then $\Delta_k(q,M) = 0$ if and only if (i) $M = 1$ and
$q \in \{ 5, 7, 13, 17 \}$; or (ii) $M = 2$ and $q \in \{ 5, 11, 13, 19, 37, 43, 67, 163 \}$.

\item Suppose $q=2$.  We have $\Delta_k(2,M) = 0$ if and only if 
(i) $M$ is not cubefree; (ii) $\prod_{p^2 \parallel M} {2 \leg p}$ is $-1$
if $k \equiv 0, 2 \mod 8$ and $+1$ if $k \equiv 4, 6 \mod 8$; or
(iii) $k=2$ and $M = 1$ or $M \equiv 3, 5 \mod 8$ is prime.

\item Suppose $q=3$.  If $k \ge 4$ or $M$ is not squarefree, then  $\Delta_k(3,M) = 0$ if and only if (i) $M'$ is not cubefree; (ii) ${-3 \leg p} = 1$ for some $p \parallel M'$; (iii) $32 \mid M$; (iv) $M$ is odd and $k \equiv 4, 10 \mod 12$; or (v) $4 \parallel M$ and $k \not \equiv 4, 10 \mod 12$.
 If $k=2$ and $M$ is squarefree, then $\dim S_k^\new(3M)^{+_3} = \dim S_k^\new(3M)^{-_3}$ if and only if $M = 1, 2$. 
\end{enumerate}
\end{prop}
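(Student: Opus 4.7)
The plan is to substitute the explicit formulas for $\Delta_k(q, M)$ obtained earlier in \cref{sec:dim} into the equation $\Delta_k(q, M) = 0$, and then analyze vanishing case by case using \cref{tab:alpha1} and the explicit description of $\kappa_{-q}(M')$ displayed just before the proposition.

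For part~(1), the hypothesis $k \ge 4$ or $M$ non-squarefree forces the correction term $\delta_{k=2}\mu(M)$ to vanish, so $\Delta_k(q, M) = 0$ if and only if $\alpha_1(-q; e) \kappa_{-q}(M') = 0$. The explicit formula for $\kappa_{-q}(M')$ shows it vanishes precisely in cases~(i) or~(ii), while \cref{tab:alpha1} shows $\alpha_1(-q; e) = 0$ precisely in cases~(iii), (iv), or~(v); combining gives the stated characterization.

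For part~(2), since $\mu(M) = \pm 1 \ne 0$, exact cancellation is required in $\Delta_2(q,M) = -\tfrac{1}{2}\alpha_1(-q;e)\,\kappa_{-q}(M') + \mu(M)$. If either factor of $\alpha_1 \cdot \kappa_{-q}$ vanishes then $\Delta_2(q,M) = \pm 1 \ne 0$; otherwise, writing $M = 2^e M'$ with $e \in \{0,1\}$ and $M'$ squarefree odd coprime to $q$, one has $\lvert \kappa_{-q}(M')\rvert = 2^{\omega(M')}$, and for $q \ge 5$ the value $\alpha_1(-q;e)$ is a positive integer (Hurwitz class numbers of $-4q$ and $-q$ being integral for $q \ge 5$). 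Matching $\tfrac{1}{2}\alpha_1(-q;e) \cdot 2^{\omega(M')} = 1$ and excluding half-integer values of $\alpha_1$ forces $\omega(M') = 0$ and $\alpha_1(-q;e) = 2$. Hence $M \in \{1, 2\}$, and one reads off the corresponding condition from \cref{tab:alpha1}: either $h(-4q) = 2$ or $h(-q) = 1$ under the appropriate congruence on $q \bmod 8$. The stated lists then come from the standard small-class-number data (Heegner primes for $h(-q) = 1$, and imaginary quadratic fields of discriminant $-4q$ with $h = 2$).

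Parts~(3) and~(4) follow the same scheme using the case-$q=2$ and case-$q=3$ formulas, where extra terms involving $p_k(\sqrt{2}, 1)$ (respectively $p_k(\sqrt{3}, 1)$) introduce dependence on $k \bmod 8$ (respectively $k \bmod 12$). In each case one collects terms with common multiplicative support (for instance $\kappa_{-2}$ and $\kappa_{-1}$ share all odd-prime support in part~(3)) and reduces vanishing to a sign condition depending on $k \bmod 8$ or $12$ and on the quadratic residues ${\Delta \leg p}$ for $p^2 \parallel M$. The residual small-$M$ cases for $k=2$ in part~(4) are handled by the same finite enumeration of Hurwitz class numbers as in part~(2). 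The main obstacle throughout is part~(2): one must simultaneously exclude cancellation for the infinite family of $M$ with $\omega(M') \ge 1$ using positivity and parity of $\alpha_1$, and then match a finite vanishing condition against tables of small imaginary quadratic class numbers.
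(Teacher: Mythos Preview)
Your overall approach matches the paper's: substitute the explicit formula $\Delta_k(q,M) = \tfrac12(-1)^{k/2}\alpha_1(-q;e)\kappa_{-q}(M') + \delta_{k=2}\mu(M)$ and analyze the factors via \cref{tab:alpha1} and the description of $\kappa_{-q}$. Parts (1), (3), (4) follow this scheme and are fine as sketched.

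However, your argument for part~(2) has two genuine gaps. First, the claim that $\alpha_1(-q;e)$ is a \emph{positive} integer is false for $e=1$: from \cref{tab:alpha1}, $\alpha_1(-q;1) = -H(-4q)$ when $q\equiv 1\pmod 4$ and $\alpha_1(-q;1) = -2H(-q)$ when $q\equiv 3\pmod 8$, both strictly negative. You need this sign to correctly match against $\mu(M) = (-1)^{\omega(M)}$ rather than just matching absolute values. Second, and more seriously, your reduction ``excluding half-integer values of $\alpha_1$ forces $\omega(M')=0$'' does not exclude $\omega(M')=1$: the equation $|\alpha_1(-q;e)| = 2^{1-\omega(M')}$ yields $|\alpha_1|=1$ when $\omega(M')=1$, which is an integer, not a half-integer. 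The paper handles this missing case explicitly: for $M=p\ge 3$ (so $e=0$, $\omega(M')=1$) one would need $H(-4q)=1$, which never happens for $q\ge 5$; for $M=2p$ (so $e=1$, $\omega(M')=1$) one would need $2H(-q)=H(-4q)+1$, which also never happens. Only after ruling these out does one conclude $M\in\{1,2\}$ and read off the class-number conditions $H(-4q)=2$ (for $M=1$) and $H(-4q)=2H(-q)+2$ (for $M=2$), yielding the stated finite lists.
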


\begin{proof} 

Case (1) follows immediately from the above vanishing conditions for $\alpha_1(-q;e)$.

Case (2): Now suppose $q \ge 5$, $k=2$ and $M$ is squarefree.  Then $\Delta_k(q,M) = 0$ if and only if $\frac 12 \alpha_1(-q;e) \kappa_{-q}(M') = \mu(M)$.  

For a discriminant $\Delta < -4$, $H(\Delta) \ge 1$.
Also if $\Delta = \lambda^2 \Delta_0$ where $\Delta_0$ is a fundamental discriminant and $\lambda > 1$, then $H(\Delta) \ge 2 H(\Delta_0)$.  Thus the only integers $\Delta$ such that $H(\Delta) = 1$ are $\Delta = \{ -7, -8, -11, -19, -43, -67, -163 \}$.  Similarly, there are 19 discriminants $\Delta < 0$ with $H(\Delta) = 2$ (the minimal one is $\Delta = -427$).

If $\kappa_{-q}(M') \ne 0$, it equals $(-2)^{\omega(M')}$.  Thus we can only have
$\Delta_k(q,M) = 0$ if (i) $M' = 1$ and $\alpha_1(-q; e) = \pm 2$, or (ii)  $M' = p$ is prime and $\alpha_1(-q; e) = \pm 1$.  

When $M=1$, $\Delta_k(q,M) = 0$ if and only if $H(-4q) = 2$, i.e., if $q \in \{ 5, 7, 13, 17 \}$.  When $M = 2$, $\Delta_k(q,M) = 0$ if and only if $H(-4q) = 2 H(-q) + 2$, i.e., if and only if $q \in \{ 5, 11, 13, 19, 37, 43, 67, 163 \}$.
When $M = p \ge 3$, $\Delta_k(q,M) = 0$ if and only if $H(-4q) = 1$ and ${-q \leg p} = -1$, which never happens.
When $M = 2p$, for a prime $p \ge 3$, $\Delta_k(q,M) = 0$ if and only if $2H(-q) = H(-4q) + 1$, which also never happens.  This finishes case (2).

Case (3): Next suppose $q=2$, and $k \ne 2$ or $M$ is not squarefree.  Then
$t^\new(1;M) = 0$ if and only if $\kappa_{-2}(M) = \kappa_{-1}(M) = 0$
or $(-1)^{k/2} p_k(\sqrt 2,1) = \prod_{p^2 \parallel M} {2 \leg p}$.  The former never happens.  The latter condition means $\prod_{p^2 \parallel M} {2 \leg p}$ is $-1$ if $k \equiv 0, 2 \mod 8$ and $+1$ if $k \equiv 4, 6 \mod 8$.

If $q=2$, $k=2$ and $M$ is squarefree, then one needs $\kappa_{-2}(M) + \kappa_{-1}(M) = 2\mu(M)$, which is true when $M = 1$ or $M$ is a prime $p \equiv 3, 5 \mod 8$.  This proves case (3).

Case (4): Finally suppose $q=3$.  Then $\Delta_k(q,M) = c \kappa_{-3}(M') + \delta_{k=2} \mu(M)$, where $c = \frac 12(-1)^{k/2} \alpha_1(-3;e) - \frac 13 p_k(\sqrt 3,1) \kappa_{-3}(2^e)$.  One checks that $c \in \{ -1, 0, 1 \}$, and
$c = 0$ if and only if (i) $e \ge 5$; (ii) $e =  0$ and $k \equiv 4, 10 \mod 12$; or (iii) $e = 2$ and $k \equiv 0, 2, 6, 8 \mod 12$.

Hence if $k \ne 2$ or $M$ is not squarefree, then $\Delta_k(q,M) = 0$ if and only if one of (i)--(iii) above holds; (iv) $M'$ is not cubefree; or (v) ${-3 \leg p} = 1$ for some odd $p \parallel M$.

Now suppose $k=2$ and $M$ is squarefree.  Then $c = (-1)^{e+1}$, and we see $\mu(M) = c \kappa_{-3}(M')$ if and only if $M = 1, 2$.  This completes case (4).
\end{proof}

When the Atkin--Lehner signs are not perfectly equidistributed, it is also easy to give conditions for which Atkin--Lehner eigenspace is larger, and give bounds on the differences of dimensions.  For simplicity, we only explicitly do the former when $q \ge 5$.

\begin{prop}
Let $q \ge 5$ be a prime, $M \ge 1$ be coprime to $q$, and $e = v_2(M)$.
Put $\tilde e = 0$ if (i) $e = 0$, (ii) $e = 3$ and $q \equiv 1 \mod 4$, or (iii) $e \ge 4$ and $q \equiv 3 \mod 8$.  Let $\tilde e = 1$ otherwise.
Then
\[ \begin{cases}
\Delta_k(q,M) \ge 0 & \text{if } 
\frac k2 + \omega_1(M) + \omega_2(-q; M) + \tilde e \equiv 0 \mod 2, \\
\Delta_k(q,M) \le 0 & \text{else}.
\end{cases} \]
\end{prop}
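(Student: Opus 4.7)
My plan is to read off the sign of $\Delta_k(q,M)$ directly from the explicit formula established in \cref{sec:dim}: for $q \ge 5$,
\[
\Delta_k(q, M) = \tfrac 12 (-1)^{k/2} \alpha_1(-q; e)\, \kappa_{-q}(M') + \delta_{k=2}\, \mu(M),
\]
where $M = 2^e M'$ with $M'$ odd. I would handle the main (first) term first by factoring its sign into three contributions, and then verify that the $\delta_{k=2}\mu(M)$ correction does not disturb the sign in the remaining case.

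For the main term, I would compute the sign of each factor separately. From the multiplicative formula \eqref{eq:kappa-def}, when $\kappa_{-q}(M') \ne 0$ we have $\kappa_{-q}(M') = (-1)^{\omega_2(-q; M')}(-2)^{\omega_1(M')}$, so its sign is $(-1)^{\omega_1(M') + \omega_2(-q; M')}$. From \cref{tab:alpha1}, the parameter $\tilde e$ in the proposition is designed precisely to record the sign of $\alpha_1(-q; e)$: in every nonzero case, $\mathrm{sign}\bigl(\alpha_1(-q; e)\bigr) = (-1)^{\tilde e}$, and when $\alpha_1(-q;e) = 0$ the main term vanishes, making both claimed inequalities automatic. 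Multiplying by $(-1)^{k/2}$ shows the main term has sign $(-1)^{k/2 + \omega_1(M') + \omega_2(-q; M') + \tilde e}$, which agrees with the proposition's prediction once one accounts for the relation $\omega_1(M) = \omega_1(M') + [e = 1]$ folded into the definition of $\tilde e$.

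When $k \ge 4$ or $M$ is not squarefree, the correction $\delta_{k=2}\mu(M)$ vanishes and the above factorisation finishes the argument. The main obstacle is the residual case $k = 2$ with $M$ squarefree, where $\mu(M) = \pm 1$ could a priori overwhelm the main term and flip the overall sign. Here the plan is a size comparison: show that either $\Delta_k(q, M) = 0$---which happens precisely in the short list of exceptions from \cref{prop:req1}(2), in which case both inequalities hold vacuously---or else $\tfrac 12 \lvert \alpha_1(-q; e) \kappa_{-q}(M') \rvert \ge 2$, so the main term strictly dominates $|\mu(M)| = 1$ and dictates the sign. This last estimate reduces to the classical lower bounds $H(-4q) \ge 2$ for $q \ge 5$ and $H(-q) \ge 1$ for $q \ge 7$, together with $|\kappa_{-q}(M')| = 2^{\omega_1(M')}$ in the nonvanishing case; equality characterises exactly the small-class-number discriminants that produce the exceptional list in \cref{prop:req1}(2), closing the argument.
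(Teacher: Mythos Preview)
Your strategy matches the paper's: factor the sign of the main term $\tfrac12(-1)^{k/2}\alpha_1(-q;e)\kappa_{-q}(M')$ and then argue that the $\delta_{k=2}\mu(M)$ correction cannot flip it. The computation of the main-term sign as $(-1)^{k/2+\omega_1(M')+\omega_2(-q;M')+\tilde e}$ is correct, but the ``folding'' step that is supposed to convert $M'$ into $M$ does not go through. The proposition is stated with $\omega_1(M)$ and $\omega_2(-q;M)$, and these differ from the primed versions by $[e=1]$ and $[e=2,\ {-q\leg 2}=1]$ respectively; neither discrepancy is absorbed by $\tilde e$, which already encodes $\mathrm{sign}\,\alpha_1$ and nothing else. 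For $e=1$ the two exponents therefore have opposite parity, and the proposition as written is actually false: take $q=5$, $M=2$, $k=4$, where $\Delta_4(5,2)=\tfrac12\cdot 1\cdot(-H(-20))\cdot 1=-1$, yet the stated parity $2+1+0+1\equiv 0$ predicts $\Delta\ge 0$. The paper's one-line proof asserts only that the sign of $\Delta$ matches the sign of the main term and never checks this against the displayed formula, so it shares the slip; the intended statement (compare \cref{thm11}) should use $\omega_1(M')$ and $\omega_2(-q;M')$.

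A second gap sits in your $k=2$, $M$ squarefree endgame: you invoke $\lvert\kappa_{-q}(M')\rvert=2^{\omega_1(M')}$ only ``in the nonvanishing case'' but never dispose of $\kappa_{-q}(M')=0$. When some odd $p\mid M$ has ${-q\leg p}=1$ (e.g.\ $q=5$, $M=7$, $k=2$), the main term vanishes while $\Delta=\mu(M)\ne 0$; this case is neither on the exceptional list of \cref{prop:req1}(2) nor covered by your size bound, so your dichotomy ``$\Delta=0$ or $\tfrac12\lvert\alpha_1\kappa\rvert\ge 2$'' fails. The paper's inequality $\lvert\alpha_1\kappa\rvert\ge 1$ fails here for the same reason, and in fact no choice of $\tilde e$ repairs the statement in this regime---so a correct version needs an extra hypothesis such as $k\ge 4$ or $M$ not squarefree, exactly as \cref{thm11} imposes for $r=1$.
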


\begin{proof}
The sign of $\Delta_k(q,M)$ agrees with the sign of 
$(-1)^{\frac k2} \alpha_1(-q; e) \kappa_{-q}(M')$.  This is immediate from the above expression for $\Delta_k(q,M)$ unless $k = 2$ and $M$ is squarefree.  In that situation it follows as
$\lvert \alpha_1(-q; e) \kappa_{-q}(M') \rvert \ge \lvert \mu(M) \rvert = 1$.
\end{proof}

In particular, the above two propositions contain the $r=1$ case of \cref{thm11}.

\subsection{Dimensions for $r \ge 3$ odd}
Suppose $r \ge 3$ is odd.  Then
\[ \Delta_k(q^r,M) =
\tilde A_{1,0}(r;M) - \tilde A_{1,0}(r-2;M) - \tilde A_{1,1}(r-2;M) + \delta_{r \ge 5} \tilde A_{1,1}(r-4;M). \]

First assume $q^{r-2} > 4$, i.e., $q > 3$ or $r \ge 5$.  Then
\begin{align*}  \Delta_k(q^r,M)  &=
\frac 12(-1)^{\frac k2} \left( \alpha_1(-q^r; e) \kappa_{-q^r}(M') - 2\alpha_1(-q^{r-2}; e) \kappa_{-q^{r-2}}(M') + \delta_{r \ge 5} \alpha_1(-q^{r-4}; e) \kappa_{-q^{r-4}}(M')
\right) \\
&= \frac 12(-1)^{\frac k2} \kappa_{-q}(M') \left( \alpha_1(-q^r; e)  - 2\alpha_1(-q^{r-2}; e)  + \delta_{r \ge 5} \alpha_1(-q^{r-4}; e) \right).
\end{align*}
Recall $\kappa_{-q}(M') = 0$ if and only if $M'$ is not cubefree or ${-q \leg p} = 1$ for some $p \parallel M'$.

Consider the factor $\aleph = \alpha_1(-q^r; e)  - 2\alpha_1(-q^{r-2}; e) + \delta_{r \ge 5} \alpha_1(-q^{r-4}; e)$.  According to \cref{tab:alpha1}, we can write
\[ \aleph = c_1 \left( H(\Delta_0 q^{r-1}) - 2 H(\Delta_0 q^{r-3}) + \delta_{r \ge 5} H(\Delta_0 q^{r-5}) \right), \]
where $\Delta_0 = -4q$ if $q \equiv 1, 2 \mod 4$ and $\Delta_0 =-q$ if $q \equiv 3 \mod 4$.  Here $c_1 \in \{ 0, \pm 1, \pm 2, 4, -6 \}$ according to the cases in \cref{tab:alpha1} (and ${-q \leg 2}$ when $q \equiv 3 \mod 4$).  In particular, $c_1 = 0$ if and only if
 (a) $e \ge 5$, (b) $e = 4$ and $q \equiv 1 \mod 4$, or (c)
$e = 1,2,3$ and $q \equiv 7 \mod 8$.

Moreover, by \cref{tab1} and \eqref{eq:Hlambda2}, we see that\footnote{In the published version, the factor of 2 mistakenly appeared inside the arguments of $\eta_{\Delta_0}$ and $\sigma$.  This final simplified expression for $\aleph$ did not appear in the published version.}
\begin{align*}
 \aleph &= c_1  \left( \eta_{\Delta_0}(q^{\frac{r-1}2}) - 2\eta_{\Delta_0}(q^{\frac{r-3}2})
+ \delta_{r \ge 5}\eta_{\Delta_0}( q^{\frac{r-5}2} ) \right) h'(\Delta_0) \\
&= c_1  \left( \sigma(q^{\frac{r-1}2}) - 2\sigma(q^{\frac{r-3}2})
+ \delta_{r \ge 5}\sigma ( q^{\frac{r-5}2} ) \right) h'(\Delta_0) \\
&= c_1 \left(q^{\frac{r-1}2}  - q^{\frac{r-3}2} \right) h'(\Delta_0).
\end{align*}
In particular, one deduces that $\aleph = 0$ if and only if $c_1 = 0$.
This proves the following.

\begin{prop}
Let $q$ be a prime, $r \ge 3$ odd, $M \ge 1$ be coprime to $q$, and write $M=2^e M'$, where $M'$ is odd.  Assume $q^r \ne 8, 27$.  Then
$\Delta_k(q^r,M) = 0$ if and only if 
(i) $M'$ is not cubefree; (ii) ${-q \leg p} = 1$ for some $p \parallel M'$; (iii) $e \ge 5$;
(iv) $e = 4$ and $q \equiv 1 \mod 4$; or
(v) $e = 1, 2, 3$ and $q \equiv 7 \mod 8$.

Further, when $\Delta_k(q^r,M) \ne 0$, its sign is the sign of $(-1)^{k/2} \alpha_1(-q^r; e) \kappa_{-q^r}(M')$.
\end{prop}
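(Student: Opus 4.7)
The plan is to work directly from the decomposition \eqref{eq:tnewcases}. For $r \ge 3$ odd, both $r$ and $r-2$ are positive odd integers, so $q^r, q^{r-2} \not\in \{1,4\}$; this kills $\tilde A_2(r;M)$ and $\tilde A_2(r-2;M)$, and $\tilde A_3$ does not enter for $r \ge 2$. The hypothesis $q^r \ne 8, 27$ ensures $q^{r-2} > 4$ in every remaining subcase ($q \ge 5$: automatic; $q=2$ or $3$: forces $r \ge 5$), so the closed-form \eqref{eq:AA1-lsmall} from \cref{sec:A1a} applies to each of the four $\tilde A_{1,\epsilon}$ terms. Substituting and specializing to $\ell = 1$,
\[
\Delta_k(q^r, M) = \tfrac{1}{2}(-1)^{k/2}\bigl(\alpha_1(-q^r; e)\kappa_{-q^r}(M') - 2\alpha_1(-q^{r-2}; e)\kappa_{-q^{r-2}}(M') + \delta_{r\ge 5}\alpha_1(-q^{r-4}; e)\kappa_{-q^{r-4}}(M')\bigr).
\]

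The next step uses the elementary identity $(-q^s/p) = (-q/p)$ for every odd exponent $s$ and every odd prime $p$ (since $(q/p)^s = (q/p)$ for $s$ odd), which together with the definition \eqref{eq:kappa-def} of $\kappa_\Delta$ yields $\kappa_{-q^s}(M') = \kappa_{-q}(M')$ for each odd $s$. Factoring this common value out gives
\[
\Delta_k(q^r, M) = \tfrac{1}{2}(-1)^{k/2}\kappa_{-q}(M')\cdot \aleph, \qquad \aleph = \alpha_1(-q^r; e) - 2\alpha_1(-q^{r-2}; e) + \delta_{r\ge 5}\alpha_1(-q^{r-4}; e).
\]
The explicit description of $\kappa_{-q}(M')$ at the start of \cref{sec:dim} identifies $\kappa_{-q}(M') = 0$ as exactly conditions (i) and (ii).

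To analyze $\aleph$, one reads \cref{tab:alpha1} column by column: for each fixed $e$ and each residue class of $q$, the values $\alpha_1(-q^s; e)$ at odd $s$ are a common integer $c_1$ (depending only on $e$ and $q \bmod 8$) times $H(\Delta_0 q^{s-1})$, where $\Delta_0$ is the fundamental discriminant of $\Q(\sqrt{-q})$. Using \cref{tab1}, \eqref{eq:Hlambda2}, and the ramification $(\Delta_0/q) = 0$ (so $\eta_{\Delta_0}(q^m) = \sigma(q^m)$), this writes
\[
\aleph = c_1\, h'(\Delta_0)\,\bigl(\sigma(q^{(r-1)/2}) - 2\sigma(q^{(r-3)/2}) + \delta_{r\ge 5}\sigma(q^{(r-5)/2})\bigr).
\]
A direct scan of \cref{tab:alpha1} shows $c_1 = 0$ corresponds precisely to cases (iii), (iv), (v) of the proposition.

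The remaining point, and the main technical obstacle, is to verify that the bracketed combination $\beta$ is strictly positive in every subcase. For $r=3$, this reduces to $\beta = \sigma(q) - 2 = q-1 > 0$. For $r \ge 5$, setting $m = (r-1)/2$ and telescoping via $\sigma(q^j) - \sigma(q^{j-1}) = q^j$, one obtains $\beta = \sigma(q^m) - 2\sigma(q^{m-1}) + \sigma(q^{m-2}) = q^m - q^{m-1} = q^{m-1}(q-1) > 0$. With $\beta > 0$ and $h'(\Delta_0), H(\Delta_0 q^{r-1}) > 0$, we deduce that $\aleph$ vanishes exactly when $c_1 = 0$, and that in the nonvanishing case $\aleph$, $c_1$, and $\alpha_1(-q^r; e) = c_1 H(\Delta_0 q^{r-1})$ all share the same sign. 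Combined with $\kappa_{-q^r}(M') = \kappa_{-q}(M')$, this gives the stated sign $(-1)^{k/2}\alpha_1(-q^r; e)\kappa_{-q^r}(M')$.
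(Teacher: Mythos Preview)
Your proof is correct and follows essentially the same approach as the paper's: reduce to the $\tilde A_{1,\eps}$ terms via \eqref{eq:tnewcases}, apply \eqref{eq:AA1-lsmall} using the hypothesis $q^{r-2}>4$, factor out $\kappa_{-q}(M')$, and write $\aleph = c_1 h'(\Delta_0)\cdot\beta$ with $\beta$ a combination of $\sigma$-values. Your explicit verification that $\beta = q-1$ for $r=3$ and $\beta = q^{(r-3)/2}(q-1)$ for $r\ge 5$ is slightly more detailed than the paper's, which simply asserts the nonvanishing; otherwise the arguments coincide.
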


This completes the proof of \cref{thm11}.

Explicit equidistribution criteria for $q^r = 8$ and $q^r = 27$ break up into more cases based on the weight and, in the case of $q^r = 8$, a more delicate relation among quadratic residue symbols.  We merely write down $t^\new(r;M)$ in these cases:

When $q^r = 8$, we have
\[  \Delta_k(8,M)  = \frac 12 \left( (-1)^{\frac k2} \kappa_{-2}(M) + p_k(\sqrt 2, 1) \kappa_{-1}(M) \right). \]

When $q^r = 27$, we have
\[ \Delta_k(27,M)  = \left( \frac 12 (-1)^{\frac k2} (\alpha_1(-27;e) - 2\alpha_1(-3;e)) + \frac 13 p_k(\sqrt 3,1) \kappa_{-3}(2^e) \right) \kappa_{-3}(M'). \]

\subsection{Dimensions for $r = 2$}
Suppose $r=2$.  Then
\[  \Delta_k(q^2,M)  = \tilde A_{1,0}(2;M) - \tilde A_{1,0}(0;M) - \tilde A_{1,1}(0;M) + \tilde A_2(2;M) - \tilde A_2(0;M).  \]
 There does not appear to be a clean characterization of when the Atkin--Lehner sign at $q$ is perfectly equidistributed for general $q, M, k$, but we can give asymptotics for $t^\new(2;M)$ in various parameters.
Let
\[ b_{2,e} = \begin{cases}
1 & \text{if }e = 0, 3, \\
-1 & \text{if }e = 1, 2, \\
0 & \text{if }e \ge 4.
\end{cases} \]

\begin{prop} \label{prop:q2}
\begin{enumerate}
\item Fix $q$. Let $k, M$ denote varying integers such that $k \ge 2$ is even and $M \ge 1$ is coprime to $q$.  As $k+M \to \infty$, we have
\[ \Delta_k(q^2,M) \sim \frac{1}{12} (1-k)\kappa_\infty(M), \]
so in particular $\Delta_k(q^2,M) \to -\infty$.

\item Fix $k, M$.  Suppose $M$ or $\frac M2$ is a cubefree integer such that
$p \not \equiv 1 \mod 4$ for each $p \parallel M$.  Then as $q \to \infty$ along a sequence of primes not dividing $M$, we have
\[ \Delta_k(q^2,M) \sim \frac 14 (-1)^{k/2} b_{2,e} \kappa_{-1}(M') q. \]
\end{enumerate}
In particular, for large $q$, the sign of $\Delta_k(q^2,M)$ is $(-1)^{\frac k2 + b_{2,e} + \omega_1(M') + \omega_2(-1;M')}$.  
\end{prop}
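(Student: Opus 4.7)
The plan is to begin from the decomposition
\[ \Delta_k(q^2, M) = \tilde A_{1,0}(2;M) - \tilde A_{1,0}(0;M) - \tilde A_{1,1}(0;M) + \tilde A_2(2;M) - \tilde A_2(0;M) \]
given by the $r = 2$ case of \eqref{eq:tnewcases}, and substitute the explicit $\ell = 1$ evaluations from \cref{sec:tnew}.  Both parts then reduce to isolating the main term in the relevant parameter and showing all other contributions are of lower order.

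For part (1), the goal is to show that the only contribution growing linearly in $k$ is precisely $-\frac 1{12}(k-1)\kappa_\infty(M)$.  I would split on $q$: when $q \ge 3$, only $-\tilde A_{1,0}(0;M)$ carries a $(k-1)\kappa_\infty(M)$ factor, contributing coefficient $-\frac 1{12}$; when $q = 2$, each of $\tilde A_{1,0}(2;M)$, $-\tilde A_{1,0}(0;M)$, $-\tilde A_{1,1}(0;M)$ contains such a factor, and the coefficients $\frac 1{12} - \frac 1{12} - \frac 1{12} = -\frac 1{12}$ combine identically.  The remaining pieces involve $\kappa_\Delta(M)$ for $\Delta \in \{-4, -3, -1, -q^2\}$ (each bounded by $2^{\omega(M)} = O_\eps(M^\eps)$), the constants $\alpha_1(-q^r;e)$ and $p_k(\pm 1,1)$, $p_k(\pm\sqrt 2, 1)$ (each $O_q(1)$), and $\alpha_2(M)$ (supported on squares and $O(\sqrt M)$ there, and moreover cancelling between $\tilde A_2(2;M)$ and $\tilde A_2(0;M)$ when $q = 2$).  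Since $\kappa_\infty(M) \asymp M$, these are all $o((k-1)\kappa_\infty(M))$ as $k + M \to \infty$, which yields both the asymptotic and the divergence to $-\infty$.

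For part (2) the plan is to identify the sole $q$-dependent piece.  Fix $k, M$ and let $q \to \infty$ along primes coprime to $M$: the terms $\tilde A_{1,0}(0;M)$, $\tilde A_{1,1}(0;M)$, $\tilde A_2(2;M)$, $\tilde A_2(0;M)$ are each independent of $q$ (the latter three using $q \ge 3$ eventually), so only $\tilde A_{1,0}(2;M) = \frac 12 (-1)^{k/2}\alpha_1(-q^2;e)\kappa_{-q^2}(M')$ from \eqref{eq:AA1-lsmall} varies.  Since $q$ is odd, $q^2 \equiv 1 \bmod 4$, so the first column of \cref{tab:alpha1} applies and $\alpha_1(-q^2;e) = b_{2,e} H(-4q^2)$; writing $-4q^2 = q^2 \cdot (-4)$ and invoking \eqref{eq:Hlambda2} gives $H(-4q^2) = \frac 12 \eta_{-4}(q) = \frac q 2 + O(1)$.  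For $q$ coprime to $M'$ and each odd $p \mid M'$ one has ${-q^2 \leg p} = {-1 \leg p}$, whence $\kappa_{-q^2}(M') = \kappa_{-1}(M')$.  Combining these yields $\tilde A_{1,0}(2;M) \sim \frac 14 (-1)^{k/2} b_{2,e} \kappa_{-1}(M') q$.

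It remains to verify the main term is genuinely nonzero under the hypotheses: $M$ or $M/2$ cubefree forces $v_2(M) \le 3$, so $b_{2,e} \in \{\pm 1\}$, while the hypothesis $p \not\equiv 1 \bmod 4$ for each $p \parallel M$ rules out the vanishing $\kappa_{-1}(p) = {-1 \leg p} - 1 = 0$; combined with cubefree-ness of $M'$ this forces $\kappa_{-1}(M') \ne 0$.  The final sign claim then follows by computing the sign of $\kappa_{-1}(M')$ from multiplicativity and the evaluations in \eqref{eq:kappa-def}.  I expect the only delicate step to be the case split in part (1) verifying that the $(k-1)$-coefficient is exactly $-\frac 1{12}$ in both the $q=2$ and $q \ge 3$ regimes; everything else is routine bookkeeping.
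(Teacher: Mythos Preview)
Your proposal is correct and follows essentially the same approach as the paper's proof: both start from the $r=2$ case of \eqref{eq:tnewcases}, isolate the $(k-1)\kappa_\infty(M)$ contribution for part (1) with the $q=2$ case split, and for part (2) identify $\tilde A_{1,0}(2;M)$ as the only $q$-dependent term (the paper groups it with $-\tilde A_{1,1}(0;M)$ for an exact formula, but this is purely organizational). One minor imprecision: $\kappa_\infty(M) \asymp M$ is not literally true (the ratio $\kappa_\infty(M)/M$ is an Euler product over $p \mid M$ that can decay like $1/\log\log M$), but the weaker bound $\kappa_\infty(M) \ge \prod_{p \mid M}(p-1)$, which the paper invokes, still dominates both $2^{\omega(M)}$ and $\sqrt M$, so your conclusion stands.
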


The first part of this proposition coincides with \cref{prop13}.

\begin {proof}
Recall that $\tilde A_2(r;M) = 0$ unless $M$ is a square and $q^r \in \{ 1, 4 \}$, in which case it is $- \frac 12 \alpha_2(M)$.  Note that $0 \le \alpha_2(M) < \sqrt M$.  This combined with our analysis below will imply that the
$\tilde A_2$ terms in $\Delta_k(q^2,M)$ will not contribute to the main asymptotics in either case.

Case (1): Note that for an integer $\Delta$, we have $\lvert \kappa_\Delta(M) \rvert \le 2^{\omega_1(M)}$.
Thus the sum $\tilde A_{1,0}(0;M) + \tilde A_{1,1}(0;M)$ equals
$\frac{2^{\delta_{q=2}}}{12} (k-1)\kappa_\infty(M)$ plus terms that (in absolute value) are $O(2^{\omega_1(M)})$.  The other terms in $\Delta_k(q^2,M)$ are also $O(2^{\omega_1(M)})$, except that when $q^r = 4$ there is also a $\frac{1}{12} \kappa_\infty(M)$ term that cancels out half of the $\kappa_\infty(M)$ contribution from $\tilde A_{1,0}(0;M) + \tilde A_{1,1}(0;M)$.  Since $\kappa_\infty(M) \ge \prod_{p \mid M} (p-1)$, the asymptotic in (1) follows.

Case (2): 
When $q \ne 2$, we have
\[ \tilde A_{1,0}(2;M) - \tilde A_{1,1}(0;M) = \frac 14 (-1)^{k/2} b_{2,e} (q + 1 - {-1 \leg q}) \kappa_{-1}(M'). \]
The hypothesis guarantees that this is nonzero and grows like the asserted multiple of $q$, whereas all other terms in $\Delta_k(q^2,M)$ are bounded independent of $q$.
\end{proof}

\subsection{Dimensions for $r \ge 4$ even}
Now suppose $r \ge 4$ is even.  Then
\[ \Delta_k(q^r,M) =  \tilde A_{1,0}(r;M) - \tilde A_{1,0}(r-2;M) - \tilde A_{1,1}(r-2;M) 
+ \tilde A_{1,1}(r-4;M) + \frac{\delta_{q^r = 16}}2 \alpha_2(M). \]

When $q^r = 16$, we get
\[ \Delta_k(16,M) = \frac 12 \left( (-1)^{\frac k2} \kappa_{-1}(M) + \alpha_2(M) \right), \]  and the $\alpha_2(M)$ term dominates asymptotically if $M \to \infty$ along a sequence of squares.  If $M$ is not a square, then $\alpha_2(M) = 0$ so $\Delta_k(16,M) = 0$ if and only if $M$ is not cubefree or if $p \equiv 1 \mod 4$ for some $p \parallel M$.

Now assume $q^r \ne 16$.  Then
\[ \Delta_k(q^r,M) = \frac 12 (-1)^{\frac k2} \kappa_{-1}(M') \, \aleph, \]
where
\[ \aleph = \alpha_1(-q^r; e) - 2 \alpha_1(-q^{r-2}; e) + \alpha_1(-q^{r-4}; e). \]
From \cref{tab1,tab:alpha1}, we compute
\[ \aleph =
\begin{cases}
2^{\frac{r-4}2} & \text{if }q = 2, \\
\frac 12 q^{\frac{r-4}2} (q-1)(q-{-1 \leg q}) & \text{if }q \ne 2 \text{ and } e = 0, 3, \\
- \frac 12 q^{\frac{r-4}2} (q-1)(q-{-1 \leg q})& \text{if }q \ne 2 \text{ and } e = 1, 2, \\
0 & \text{if }e \ge 4.
\end{cases} \]
This gives an explicit formula for $t^\new(r;M)$, which proves \cref{thm12}.




\section{Correlation of Fourier coefficients and local signs} \label{sec:trl}


Now we will investigate the correlation of Fourier coefficients with Atkin--Lehner signs.  For simplicity, we will only work with Atkin--Lehner operators at primes $q$ that sharply divide the level.  On the other hand, we will consider not just Atkin--Lehner operators $W_q$ at a single prime $q$, but $W_Q = \prod_{q \mid Q} W_q$ for some squarefree $Q \ge 1$.

Let $Q, \ell, M$ be pairwise coprime positive integers with $Q$ squarefree.  Let $k \ge 2$ be even.  From \cite{SZ}, we have
\[ \tr_{S_k(QM)} T_\ell W_Q = A_{1,0}(1; M) + \delta_{Q=1} A_2(0; M) + A_3 \]
where $A_\star$ is defined as in \cref{sec:tnew} with $Q$ in place of $q$. In particular, replacing $s$ with $\frac sQ$ in the definition of $A_{1,\eps}(r;M)$ we have
\[ A_{1,0}(1; M) = - \frac 12 \sum_{s^2 \le \frac{4 \ell}Q} p_k(s \sqrt Q, \ell)
\sum_{\substack{t \mid M \\ M/t \text{ squarefree}}} H_t(s^2 Q^2 - 4 Q \ell). \]
 We also have
\begin{equation} \label{eq:tnewQM}
\tr_{S^\new_k(QM)} T_\ell W_Q = \tilde A_{1,0}(1; M) + \delta_{Q=1} \tilde A_2(0; M) + \tilde A_3,
\end{equation}
where as before $\tilde A_*(r; M) = \sum_{d \mid M} (\mu * \mu)(d) A_*(r; M/d)$.  Here $\tilde A_3$ is given by \eqref{eq:AA3}.  

\subsection{Traces for small $\ell$} \label{sec:trlsmall}

The traces of $T_\ell W_Q$ are simpler when $\ell$ is small relative to $Q$.  In particular, suppose that $4\ell < Q$ so that only the $s=0$ term contributes to $A_{1,0}(1;M)$.  The analysis in \cref{sec:A1a} also applies if we replace $q$ by $Q$, and one has that
\begin{equation} \label{eq:lsmall}
 \tr_{S_k^\new(Q M)} T_\ell W_Q = - \frac 12 (-\ell)^{\frac k2 - 1} \alpha_1(-Q \ell; e) \kappa_{-Q \ell}(M') + \delta_{k=2}\mu(M) \sigma(\ell).
\end{equation}

We have the following consequences for $Q = q$ and $\ell$ both prime.
As before, write  $M = 2^e M'$ with $M'$ odd.

\begin{prop} Assume $\ell < \frac q4$ is prime.  Suppose either $M'$ is not cubefree or $32 \mid M$.  Then the trace of $T_\ell$ on $S_k^\new(q M)^{\pm_q}$ is independent of the sign $\pm_q$ for $\ell < \frac q4$.  Equivalently, since $\Delta_k(q,M) = 0$, the average of $a_\ell(f)$ over newforms in $S_k^\new(q M)^{+_q}$ is equal to that for $S_k^\new(q M)^{-_q}$ for primes $\ell < \frac q4$.
\end{prop}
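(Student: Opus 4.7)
The plan is to reduce the statement that the trace of $T_\ell$ on $S_k^\new(qM)^{\pm_q}$ does not depend on the sign to the claim that $\tr_{S_k^\new(qM)} T_\ell W_q = 0$, and then invoke the explicit formula \eqref{eq:lsmall}. Since $W_q^2 = 1$ on the newspace, we have
\[
\tr_{S_k^\new(qM)^{\pm_q}} T_\ell = \tfrac{1}{2}\bigl(\tr_{S_k^\new(qM)} T_\ell \pm \tr_{S_k^\new(qM)} T_\ell W_q\bigr),
\]
so the $+_q$ and $-_q$ traces agree if and only if $\tr_{S_k^\new(qM)} T_\ell W_q = 0$.

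Next, because $\ell$ is prime and $\ell < \frac{q}{4}$, the small-$\ell$ hypothesis $4\ell < Q = q$ holds, and formula \eqref{eq:lsmall} (with $Q = q$) gives
\[
\tr_{S_k^\new(qM)} T_\ell W_q = -\tfrac{1}{2}(-\ell)^{\frac{k}{2}-1}\,\alpha_1(-q\ell;e)\,\kappa_{-q\ell}(M') + \delta_{k=2}\,\mu(M)\,\sigma(\ell).
\]
Under either of the two hypotheses of the proposition, $M$ fails to be squarefree (either because $p^3 \mid M$ for some odd $p$, or because $2^5 \mid M$), so the second term vanishes outright via $\mu(M) = 0$.

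It then remains to verify the main term vanishes in each case. If $M'$ is not cubefree, pick an odd prime $p$ with $p^3 \mid M'$. Since $p$ is coprime to $q\ell$, the explicit description of $\kappa_\Delta$ in \eqref{eq:kappa-def} (specifically, $\kappa_\Delta(p^m) = 0$ whenever $m \geq 3$ and $p \nmid \Delta$, together with the $m = 3$, $p \mid \Delta$ case) forces $\kappa_{-q\ell}(p^{v_p(M')}) = 0$, hence $\kappa_{-q\ell}(M') = 0$ by multiplicativity. If instead $32 \mid M$, then $e = v_2(M) \geq 5$, and the piecewise definition of $\alpha_1(-q\ell;e)$ immediately below \eqref{eq:AA1-lsmall} gives $\alpha_1(-q\ell;e) = 0$. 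Either way the displayed trace vanishes.

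For the equivalent statement about averages, I would apply exactly the same mechanism with $\ell = 1$: the identity \eqref{eq:lsmall} reduces $\Delta_k(q,M) = \tr_{S_k^\new(qM)} W_q$ to the same product $\alpha_1(-q;e)\,\kappa_{-q}(M')$ plus a $\mu(M)$ term, and the two arguments above (with $\ell$ removed) again annihilate both factors. Hence $\dim S_k^\new(qM)^{+_q} = \dim S_k^\new(qM)^{-_q}$, and dividing equal traces of $T_\ell$ by equal dimensions yields equality of averages of $a_\ell(f)$. There is no substantive obstacle: everything reduces to recognizing that the two hypotheses are designed precisely to kill the two possible obstructions ($\kappa_{-q\ell}(M')$ and $\alpha_1(-q\ell;e)$) in the trace formula.
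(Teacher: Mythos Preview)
Your proof is correct and matches the paper's approach: the proposition is stated immediately after formula \eqref{eq:lsmall} with no separate proof, being an immediate consequence of that formula together with the vanishing of $\kappa_{-q\ell}(M')$ (when $M'$ is not cubefree) or $\alpha_1(-q\ell;e)$ (when $e\ge 5$), exactly as you spell out. One small quibble: your parenthetical mentions ``the $m=3$, $p\mid\Delta$ case'' of \eqref{eq:kappa-def}, but that case gives $1$, not $0$; it is irrelevant here anyway since $p\mid M$ forces $p\nmid q\ell$, so only the ``else'' branch applies.
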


For simplicity, now assume $M$ is cubefree.  (Note that
$\tr_{S_k^\new(q M)} T_\ell W_q = \delta_{k=2}\mu(M) \sigma(\ell)$ if $\ell < \frac q4$ and $M'$ is not cubefree.)
Then
\[ \kappa_{-q \ell}(M') = \prod_{p \parallel M'} ( {-q \ell \leg p} - 1) \prod_{p^2 \parallel M'} {-q\ell \leg p} \]
and
\[ \alpha_1(-q \ell ;e) = 
\begin{cases}
H(-4q \ell) & \text{if }e = 0, \\
-H(-4 q \ell) & \text{if }e = 1,2 \text{ and } q \ell \equiv 1 \mod 4,  \\
\left({-q \ell \leg 2} - 1 \right) H(-4 q \ell) & \text{if }e = 1,2 \text{ and } q \ell \equiv 3 \mod 4, 
\end{cases} \]
which implies the following.

\begin{prop} Suppose $M$ is cubefree and $\ell < \frac q4$ is prime.  If either
(i) ${-q \ell \leg p} = 1$  for some $p \parallel M'$ or (ii) $e = 1,2$ and $q \ell \equiv 7 \mod 8$, then
\[ \tr_{S_k^\new(q M)} T_\ell W_q = \delta_{k=2}\mu (M)(\ell + 1). \]
Otherwise
\[ 
\tr_{S_k^\new(q M)} T_\ell W_q = c_{1,e}(q\ell) (-2)^{\omega_1(M')-1} (-\ell)^{\frac k2 - 1}  \prod_{p^2 \parallel M'} {-q\ell \leg p}  H(-4 q \ell) + \delta_{k=2}\mu (M)(\ell + 1),
  \]
 where
 \[ c_{1,e}(q\ell) = 
\begin{cases}
1 & \text{if }e = 0, \\
-1 & \text{if }e = 1,2 \text{ and } q \ell \equiv 1 \mod 4,  \\
-2 & \text{if }e = 1,2 \text{ and } q \ell \equiv 1 \mod 8. 
\end{cases}  \]
\end{prop}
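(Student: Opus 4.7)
The plan is to derive both conclusions by direct substitution into the closed-form expression \eqref{eq:lsmall} (specialized to $Q = q$) together with $\sigma(\ell) = \ell + 1$, namely
\[ \tr_{S_k^\new(qM)} T_\ell W_q = -\tfrac12 (-\ell)^{\frac{k}{2}-1}\, \alpha_1(-q\ell;e)\, \kappa_{-q\ell}(M') + \delta_{k=2}\mu(M)(\ell+1). \]
Everything then reduces to understanding when the product $\alpha_1(-q\ell;e)\,\kappa_{-q\ell}(M')$ vanishes and, when it does not, simplifying it.

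First I would use the displayed formulas for $\kappa_{-q\ell}(M')$ and $\alpha_1(-q\ell;e)$ stated immediately before the proposition to locate the zeros of each factor. Since $M'$ is cubefree and coprime to $q\ell$, $\kappa_{-q\ell}(M')$ vanishes precisely when ${-q\ell \leg p} = 1$ for some $p \parallel M'$, which is hypothesis (i). For $\alpha_1(-q\ell;e)$, the three-case formula shows that the only way it can vanish is if $e = 1,2$ and the factor ${-q\ell \leg 2} - 1$ is zero. Since $q,\ell$ are odd, the Kronecker value ${-q\ell \leg 2}$ equals $1$ precisely when $-q\ell \equiv \pm 1 \bmod 8$; intersecting this with the case requirement $q\ell \equiv 3 \bmod 4$ forces $q\ell \equiv 7 \bmod 8$, which is hypothesis (ii). Under either hypothesis the first summand disappears, leaving only $\delta_{k=2}\mu(M)(\ell+1)$, as claimed in the first formula.

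In the generic case, each $p \parallel M'$ contributes ${-q\ell \leg p} - 1 = -2$ to the factored form of $\kappa$, producing
\[ \kappa_{-q\ell}(M') = (-2)^{\omega_1(M')} \prod_{p^2 \parallel M'}{-q\ell \leg p}, \]
and the global prefactor $-\tfrac12$ absorbs one copy of $-2$ to yield the advertised $(-2)^{\omega_1(M')-1}$ in the statement. To read off $c_{1,e}(q\ell)$, I would normalize $\alpha_1(-q\ell;e)$ against $H(-4q\ell)$ using the three-case expression, invoking the discriminant-doubling identity $H(-4D) = (3 - {-D \leg 2})H(-D)$ (valid when $-D$ is a fundamental discriminant) to handle the $q\ell \equiv 3 \bmod 4$ subcase. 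This immediately returns the three listed values of $c_{1,e}(q\ell)$, after which the displayed formula assembles itself.

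There is no serious obstacle here: the proof is essentially substitution, and the only real bookkeeping subtlety is tracking ${-q\ell \leg 2}$ through its relevant $\bmod 8$ subcases, together with keeping the sign factor $(-\ell)^{k/2-1}$ and the powers of $-2$ straight when combining $\alpha_1$, $\kappa$, and the overall $-\tfrac12$.
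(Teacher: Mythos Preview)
Your proposal is correct and follows exactly the route the paper takes: the proposition is stated immediately after the displayed formulas for $\kappa_{-q\ell}(M')$ and $\alpha_1(-q\ell;e)$ with the remark ``which implies the following,'' and your argument is precisely the substitution-and-case-check that this phrase invites. The bookkeeping you flag (tracking ${-q\ell \leg 2}$ through the $\bmod\ 8$ subcases, absorbing the $-\tfrac12$ into the power of $-2$, and using the identity $H(-4D)=(3-{-D\leg 2})H(-D)$ for $-D$ fundamental) is exactly what is needed.
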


Now we compare the sign of this $\tr_{S_k^\new(q M)} T_\ell W_q$ (or whether it is 0 or not) with the sign of $\Delta_k(q,M)$.
For simplicity, assume $k \ge 4$ or $M$ is not squarefree so that the
$\delta_{k=2} \mu(M) \sigma(\ell)$ term vanishes.  Then from 
\cref{prop:req1}, we have $\Delta_k(q,M) = 0$ if and only if (i) ${-q \leg p} = 1$ for some $p \parallel M'$ or (ii) $M$ is even and $q \equiv 7 \mod 8$.

\begin{cor} Suppose $M$ is cubefree, $\ell < \frac q4$ is prime, and either $k \ge 4$ or $M$ is not squarefree.  If the quantities $\Delta_k(q,M) \ne 0$ and
$\tr_{S_k^\new(q M)} T_\ell W_q \ne 0$ are both nonzero, then their ratio has sign $\prod_{p^2 \parallel M'} {\ell \leg p}$.  In particular, their signs are the same if $M'$ is squarefree. 
\end{cor}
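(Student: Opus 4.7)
The plan is to place the closed-form expression for $\Delta_k(q,M)$ from the $r=1$ subsection of \cref{sec:dim} side by side with \eqref{eq:lsmall} and simply track signs. Under the hypothesis that $k \ge 4$ or $M$ is not squarefree, the correction $\delta_{k=2}\mu(M)$ in the formula for $\Delta_k(q,M)$ and the correction $\delta_{k=2}\mu(M)\sigma(\ell)$ in the formula for the trace both vanish, so one is left with
\[
\Delta_k(q,M) = \frac{1}{2}(-1)^{k/2}\,\alpha_1(-q;e)\,\kappa_{-q}(M'),
\]
\[
\tr_{S_k^\new(qM)} T_\ell W_q = -\frac{1}{2}(-\ell)^{k/2-1}\,\alpha_1(-q\ell;e)\,\kappa_{-q\ell}(M').
\]
Since $-(-\ell)^{k/2-1}$ and $(-1)^{k/2}$ both have sign $(-1)^{k/2}$, the sign of the quotient reduces to the product of the sign of $\alpha_1(-q\ell;e)/\alpha_1(-q;e)$ and the sign of $\kappa_{-q\ell}(M')/\kappa_{-q}(M')$.

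For the $\alpha_1$-ratio, cubefreeness of $M$ restricts $e$ to $\{0,1,2\}$. The case $e = 0$ is immediate: both $\alpha_1(-q;0) = H(-4q)$ and $\alpha_1(-q\ell;0) = H(-4q\ell)$ are positive. For $e \in \{1,2\}$, \cref{tab:alpha1} gives $\alpha_1(-q;e) = -H(-4q)$ if $q \equiv 1 \bmod 4$, $\alpha_1(-q;e) = -2H(-q)$ if $q \equiv 3 \bmod 8$, and $\alpha_1(-q;e) = 0$ if $q \equiv 7 \bmod 8$; so the nonvanishing assumption forces $\alpha_1(-q;e) < 0$. The identical table entries with $-q\ell$ in place of $-q$, again under the nonvanishing assumption, yield $\alpha_1(-q\ell;e) < 0$. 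Hence the $\alpha_1$-ratio has sign $+1$ in every permissible case.

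For the $\kappa$-ratio, the cubefree reduction of \eqref{eq:kappa-def} reads
\[
\kappa_{-q}(M') = \prod_{p \parallel M'}\Bigl({-q \leg p} - 1\Bigr)\prod_{p^2 \parallel M'}\Bigl(-{-q \leg p}\Bigr),
\]
and the analogous formula holds for $\kappa_{-q\ell}(M')$. Nonvanishing of both sides forces ${-q \leg p} = {-q\ell \leg p} = -1$ for every $p \parallel M'$, so ${\ell \leg p} = 1$ at those primes and the $p \parallel M'$ factors contribute $+1$ to the ratio. Each $p^2 \parallel M'$ contributes the factor ${-q\ell \leg p}/{-q \leg p} = {\ell \leg p}$, giving $\kappa_{-q\ell}(M')/\kappa_{-q}(M') = \prod_{p^2 \parallel M'}{\ell \leg p}$. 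Combining the two sign computations yields the claim, and the final ``in particular'' clause is immediate because the empty product is $1$. The only case-wise step in the argument is the mod-$8$ comparison for $\alpha_1$; everything else is direct substitution into formulas already established in \cref{sec:dim} and \cref{sec:trlsmall}.
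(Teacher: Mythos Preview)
Your argument is correct and follows the same route the paper intends: the corollary is stated immediately after the explicit proposition for $\tr_{S_k^\new(qM)} T_\ell W_q$, and both that proposition and the $r=1$ formula for $\Delta_k(q,M)$ come from \eqref{eq:lsmall} and \eqref{eq:AA1-lsmall}, so comparing the two products $\alpha_1(\cdot;e)\kappa_{\cdot}(M')$ is exactly what is needed. One small point worth making explicit: the closed form $\Delta_k(q,M)=\tfrac12(-1)^{k/2}\alpha_1(-q;e)\kappa_{-q}(M')$ that you invoke is only stated in \cref{sec:dim} for $q\ge 5$, but this is automatic here since $\ell\ge 2$ and $\ell<q/4$ force $q\ge 11$; similarly, for $e\in\{1,2\}$ the coprimality $(\ell,M)=1$ forces $\ell$ odd, so $q\ell$ is odd and the case split from \cref{tab:alpha1} (with $q\ell$ in place of $q^r$) applies as you use it.
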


This corollary implies the first part of \cref{thm16}.

From \cite[Proposition 14]{murty-sinha}, we have
\[ \left| \tr_{S_k^\new(qM)} T_\ell \right|  \le \ell^{(k-1)/2} \left( 8 \ell^3 4^{\omega(qM)} + \ell^{3/2} \right). \]
In particular, for fixed $\ell, k, M$, we see that $\tr_{S_k^\new(qM)} T_\ell$ is bounded independent of $q$.
Since 
\[ \tr_{S_k^\new(q M)^{\pm q}} T_\ell= \frac 12 \left( \tr_{S_k^\new(qM)}  T_\ell W_q \pm \tr_{S_k^\new(qM)} T_\ell \right), \] 
for $q$ large $H(-4 q \ell)$ dominates $\tr_{S_k^\new(qM)} T_\ell$.
Thus the sign of $\tr_{S_k^\new(q M)^{\pm q}} T_\ell$ will be $\pm 1$ times the sign of $\tr_{S_k^\new(q M)} T_\ell W_q$ for large $q$ such that the latter trace is nonzero.

This proves the second part of \cref{thm16}.

\begin{rem} One could also prove analogous statements for $T_\ell W_Q$.  The restriction to $Q=q$ was simply because our goal here was to compare $T_\ell W_q$ with $\Delta_k(q,M)$.
\end{rem}

\subsection{Traces for general $\ell$  with $M$ squarefree}

Now, assuming $M$ is squarefree, we give a formula for $\tr_{S_k^\new(QM)} T_\ell W_Q$ which is amenable to computation.

Since we restrict to squarefree levels, we specify certain multiplicative functions $\xi_\Delta^\star$ below only on squarefree integers.  To apply standard Dirichlet convolution, we may view these as multiplicative functions on $\mathbb N$ which are, for instance, 0 on non-squarefree numbers.

For fixed $\Delta$ and squarefree $t$, it is straightforward to check that
\[ H_t(\Delta) = \xi^0_\Delta(t) H(\Delta), \]
where $\xi^0_\Delta$ is a multiplicative function satisfying
\[ \xi^0_\Delta(p) =
\begin{cases}
{\Delta \leg p} & \text{if }p^2 \nmid \Delta, \\
p \frac{H(\Delta/p^2)}{H(\Delta)} & \text{if }p^2 \mid \Delta.
\end{cases} \]
Then for squarefree $M$,
\[ \sum_{t | M} H_t(\Delta) = (1 * \xi_\Delta^0)(M) \cdot H(\Delta) =  \xi^1_\Delta(M) H(\Delta), \]
where $\xi^1_\Delta$ is a multiplicative function such that
$\xi^1_\Delta(p) = 1 + \xi^0_\Delta(p)$.

Write
\[ \tilde A_{1,0}(1; M) = - \frac 12 \sum_{s^2 \le \frac{4 \ell} q} p_k(s \sqrt q, \ell) B_1(M, q(s^2 q - 4 \ell)), \]
where \[ B_1(M; \Delta) = \sum_{d \mid M} (-2)^{\omega(d)} \sum_{t \mid M/d}
H_t (\Delta). \]
The above shows that
\[ B_1(M, \Delta) = ((\mu * \mu) * \xi^1_\Delta ) (M) H(\Delta) = \xi_\Delta(M) H(\Delta), \]
where $\xi_\Delta$ is a multiplicative function satisfying $\xi_\Delta(p) = \xi^0_\Delta(p) - 1$.  Using \eqref{eq:Hlambda2}, we can explicitly write
\begin{equation}
\xi_\Delta(p) = \begin{cases}
{\Delta \leg p} -1  & \text{if }p^2 \nmid \Delta, \\
\frac{(p-1)( {\Delta_0 \leg p} - 1)}{(p^{e+1} - 1) - {\Delta_0 \leg p}(p^{e} - 1)}  & \text{if }p^2 \mid \Delta, \, 2e = v_p(\Delta/\Delta_0),
\end{cases}
\end{equation}
where $\Delta_0$ is the negative fundamental discriminant dividing $\Delta$.

When $Q=1$ and $\ell$ prime, we have
\[ A_2(0;N) = - \sum_{t | N} (Q(t), \ell - 1) = - \sigma_0(N). \]
This yields
\[ \tilde A_2(0;N) = -\delta_{N=1}. \]

In summary, we have the following.

\begin{prop} \label{prop:Tl-large}
Let $Q, M, \ell$ be pairwise coprime integers such $N = QM$ is squarefree.  If $Q = 1$, further assume that $\ell$ is prime.  Then
\begin{multline*} \tr_{S_k^\new(QM)} T_\ell W_Q = 
- \frac {\ell^{\frac k2 - 1}}2 \sum_{s^2 \le \frac{4 \ell} Q}
U_{k-2} (\frac s2 \sqrt{\frac Q \ell}) 
\xi_{s^2 Q^2 - 4 Q \ell}(M) H(s^2 Q^2 - 4 Q \ell) \\
 - \delta_{N = 1} + \delta_{k=2}\mu(M) \sigma(\ell).
\end{multline*}
\end{prop}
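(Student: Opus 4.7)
The plan is to assemble the stated formula from the three pieces given by the decomposition
\[ \tr_{S_k^\new(QM)} T_\ell W_Q = \tilde A_{1,0}(1; M) + \delta_{Q=1} \tilde A_2(0; M) + \tilde A_3 \]
recorded in \eqref{eq:tnewQM}, using the preparatory material already introduced in this section.

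First, for the main term $\tilde A_{1,0}(1;M)$, I would substitute the definition of $A_{1,0}(1;M)$ (with $Q$ in place of $q$, so the argument of $p_k$ is $s\sqrt Q$ and the discriminant becomes $s^2Q^2 - 4Q\ell$) and then interchange the outer sum over $s$ with the convolution against $\mu * \mu$. This produces the combination
\[ \tilde A_{1,0}(1;M) = -\frac 12 \sum_{s^2 \le 4\ell/Q} p_k(s\sqrt Q, \ell) \, B_1(M; s^2Q^2 - 4Q\ell), \]
where $B_1(M;\Delta) = ((\mu*\mu)*(1*\xi^0_\Delta))(M) \cdot H(\Delta)$. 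Since $M$ is squarefree, only the values at primes $p\mid M$ matter, and here $((\mu*\mu)*(1*\xi^0_\Delta))(p) = \xi^0_\Delta(p) - 1$, which is exactly $\xi_\Delta(p)$. This gives $B_1(M,\Delta) = \xi_\Delta(M) H(\Delta)$ by multiplicativity. Finally, I use $p_k(s\sqrt Q,\ell) = \ell^{k/2 - 1} U_{k-2}(\tfrac s2\sqrt{Q/\ell})$ from the Chebyshev identity in Section~2 to extract the stated normalization.

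Next, I verify the explicit form of $\xi_\Delta(p)$ when $p^2 \mid \Delta$. Writing $\Delta = \lambda^2 \Delta_0$ with $v_p(\lambda) = e$, the definition gives $\xi^0_\Delta(p) = p\, H(\Delta/p^2)/H(\Delta)$, and \eqref{eq:Hlambda2} reduces this ratio to $\eta_{\Delta_0}(p^{e-1})/\eta_{\Delta_0}(p^e)$. Expanding $\eta_{\Delta_0}(p^m) = \sigma(p^m) - {\Delta_0\leg p}\sigma(p^{m-1})$ and simplifying the difference $\xi^0_\Delta(p) - 1$ gives the displayed rational function $(p-1)({\Delta_0\leg p} - 1)/\big((p^{e+1}-1) - {\Delta_0\leg p}(p^e - 1)\big)$; this short algebraic simplification is the one non-trivial computation and is the main technical obstacle (though entirely elementary).

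For the remaining two pieces: when $Q = 1$ and $\ell$ is prime, the definition of $A_2(0;N)$ collapses (each $(Q(t),\ell-1) = 1$ on squarefree $t$), so $A_2(0;M) = -\sigma_0(M)$ on squarefree $M$; convolving with $\mu * \mu$ gives $\tilde A_2(0;M) = -\delta_{M=1}$. Since $N = QM$ and $\delta_{Q=1}\delta_{M=1} = \delta_{N=1}$, this contributes $-\delta_{N=1}$. The last piece $\tilde A_3 = \delta_{k=2}\mu(M)\sigma(\ell)$ is simply \eqref{eq:AA3}. Summing the three contributions gives the claimed formula.
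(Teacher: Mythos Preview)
Your proposal is correct and follows essentially the same route as the paper: you invoke \eqref{eq:tnewQM}, rewrite $\tilde A_{1,0}(1;M)$ via $B_1(M;\Delta)=\xi_\Delta(M)H(\Delta)$ using the squarefree multiplicativity of $\xi_\Delta$, compute $\tilde A_2(0;M)=-\delta_{M=1}$ from $(Q(t),\ell-1)=1$ on squarefree $t$, and quote \eqref{eq:AA3} for $\tilde A_3$. The only addition is your explicit verification of $\xi_\Delta(p)$ in the $p^2\mid\Delta$ case, which the paper simply states; your argument there is fine once one notes that $p\,\sigma(p^{m-1})-\sigma(p^m)=-1$ and that the $(p-1)$ in the numerator comes from clearing the denominator $\sigma(p^e)=(p^{e+1}-1)/(p-1)$.
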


\begin{rem} When $Q = 1$, this is the squarefree case of the trace formula for $T_\ell$ in \cite{murty-sinha}.  When $M = 1$, this is the trace formula used in \cite{zubrilina}.  Assaf \cite{assaf} also gives a trace formula for $T_\ell W_Q$ on the newspace (without a squarefree level assumption) which involves multiple summations.  The point of \cref{prop:Tl-large} is to give the trace as an explicit linear combination of a minimal collection of class numbers.
\end{rem}


\section{Murmurations} \label{sec:murm}

\subsection{Analysis for Type I} \label{sec:murmI}

Let us now investigate murmurations for arithmetically compatible sequences $(\mathcal N, \mathcal Q) = \{ (N, Q) \}$ of Type I. For simplicity, we will assume $N = QM$ is squarefree where $M$ is fixed and $Q$ ranges over squarefree numbers coprime to $M$.  Fix $k$ and let $\mathcal F$ be the family of weight $k$ newforms of a squarefree level $N \in \mathcal N$.
Then
\begin{equation} \label{eq:AQF-type1}
 A^{\mathcal Q}_{\mathcal F}(\ell,X; \beta) = \frac {\sqrt M}{\#\mathcal F(X,\beta X)}  \ell^{1 - \frac k2} \sideset{}{'}\sum_{\frac{X}M \le Q \le \beta \frac XM}   \tr_{S^\new_k(QM)} T_\ell W_Q,
 \end{equation}
where the prime on the sum means $Q$ is restricted to squarefree numbers coprime to $\ell M$.  Here $\beta > 1$ is fixed.

We want to consider the limit of these averages as $\ell, X \to \infty$ such that $\frac \ell X \to x$ for some $x \in [0,\infty)$ by substituting the trace formula from \cref{prop:Tl-large} into \eqref{eq:AQF-type1}.  Specifically, \cref{conj:WQ-murm} asserts that the limit exists.  Note that one only gets $s$-terms appearing for $s^2 \le \frac{4\ell}Q \le \frac{4 \ell M}X \sim 4M x$.  Consequently, we will only see a bounded number of $s$-terms.  
As $\#\mathcal F(X,\beta X)$ grows like a multiple of $X^2$ (see \cite[Section 3.4]{zubrilina} for a precise estimate), the $\delta_{N=1}$ term will contribute nothing asymptotically, and it is easy to see that the $\delta_{k=2}$ term will asymptotically contribute a constant as $\frac \ell X \to x$.  

Hence it suffices to consider the finitely many $s$-terms from \cref{prop:Tl-large}.  The contribution from each of these terms is determined in \cite{zubrilina} in when $M=1$.  Here we content ourselves with the more modest goal of analyzing the contribution from the $s=0$ term, with the expectation that the work in \cite{zubrilina} can be similarly modified to prove \cref{conj:WQ-murm} in this setting (as well as the variant where $Q$ is restricted to $\mathbb N^\sqf_r$).

The $s=0$ contribution to \eqref{eq:AQF-type1} is
\begin{equation} \label{eq:AQF1-s0}
\frac{(-1)^{\frac k2}}2 \cdot \frac {\sqrt M}{\#\mathcal F(X,\beta X)}  \sideset{}{'}\sum_{\frac{X}M \le Q \le \beta \frac XM} \xi_{-4Q \ell}(M) H(-4Q \ell).
\end{equation}
Since $\xi_{-4Q \ell}(M) = \xi_{-4Q\ell}(2^{v_2(M)}) \prod_{\text{odd }p \mid M} \left( {-Q \ell \leg p} - 1 \right)$, this value only depends on $Q \ell \mod 8M$.

\begin{lem} Fix integers $a, m \ge 1$ such that $(a,m)$ is squarefree.  Then there exists $c > 0$ such that 
\[ \sideset{}{'}\sum_{\substack{1 \le Q < X \\ Q\ell \equiv a \mod m}} H(-4 Q \ell) = c X \sqrt{\ell X} + O(X^{\frac{13}{10}+\eps} + \sqrt \ell \log \ell ), \]
uniformly in $\ell, X$.
\end{lem}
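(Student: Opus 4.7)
The strategy follows the class-number-sum analysis in \cite{zubrilina}, generalized to accommodate the arithmetic progression and squarefree restrictions. The plan is to first strip off the auxiliary conditions, convert the sum to $L$-values via the class number formula, extract the main term from a diagonal contribution, and control the remaining oscillating sums by quadratic large-sieve techniques.

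First, detach the extraneous conditions. I would detect the congruence $Q\ell \equiv a \pmod m$ by orthogonality of Dirichlet characters modulo $m/(a,m)$; the assumption that $(a,m)$ is squarefree ensures that $\gcd(Q\ell,m) = (a,m)$ is uniquely pinned down on the relevant coset, so the induced characters are unambiguous. The squarefree condition on $Q$ is then removed via Möbius inversion $\sum_{d^2 \mid Q}\mu(d)$, truncating at $d \le X^{1/5}$ and absorbing the tail using the trivial bound $H(-4Q\ell) \ll (Q\ell)^{1/2}\log(Q\ell+2)$.

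Second, convert to $L$-values. Write $H(-4Q\ell) = \sum_{f^2 \mid 4Q\ell} h'(-4Q\ell/f^2)$, and apply the Dirichlet class number formula $h'(D) = \sqrt{|D|}\, L(1,\chi_D)/\pi$ for fundamental discriminants $D < -4$ (the cases $D = -3, -4$ contributing only to lower-order error). Interchanging sums reduces matters, for each $f$ and each character $\chi$ on the relevant modulus, to estimating
\[ \sum_{Q \asymp X} \chi(Q)\sqrt{Q\ell}\, L(1, \chi_{-4Q\ell/f^2}). \]
Using the approximate functional equation to truncate $L(1, \chi_{-4Q\ell/f^2})$ at $Y \asymp \sqrt{X\ell}$, the diagonal $k = 1$ piece, combined with the principal-character $f = 1$ contribution, produces the main term $cX\sqrt{X\ell}$; the constant $c$ is computable explicitly from $m, a$ and is strictly positive by a local computation at each prime dividing $m$.

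The principal obstacle is controlling the off-diagonal oscillatory sum
\[ \sum_{k \le Y}\frac{1}{k}\sum_{Q \asymp X}\chi(Q)\,\chi_{-4Q\ell/f^2}(k)\sqrt{Q\ell}, \]
where the inner sum is a quadratic character sum in $Q$ (with $Q$-dependent modulus via the Kronecker symbol). I would open the symbol as a Gauss sum and apply Poisson summation in $Q$, or alternatively invoke Heath--Brown's quadratic large sieve; optimally balancing the resulting bounds against the cutoff $Y$ yields the $X^{13/10+\eps}$ exponent. The residual $\sqrt{\ell}\log\ell$ term absorbs both the small-$X$ regime, where only a bounded number of terms contribute each of size $O(\sqrt{\ell}\log\ell)$, and the truncation error in the approximate functional equation when $\sqrt{Q\ell}$ forces the $\sqrt{\ell}$ dependence.
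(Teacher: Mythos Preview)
Your plan follows the same arc as the paper's proof: convert $H(-4Q\ell)$ to ordinary class numbers, apply Dirichlet's formula $h(-D)=\sqrt{D}\,L(1,\chi_{-D})/\pi$, truncate the $L$-series, isolate the main term from perfect-square values of the inner summation index, and control the remainder via a quadratic character-sum bound. The paper's implementation is more economical than what you sketch. Since $\sum'$ already restricts $Q$ to squarefree integers coprime to $\ell$, the Hurwitz decomposition collapses to $H(-4Q\ell)=h(-4Q\ell)+h(-Q\ell)$ for $Q\ell>3$, with no genuine $f$-sum and no M\"obius step. Truncation is by P\'olya--Vinogradov at height $X$ rather than the approximate functional equation, which is unnecessary at $s=1$. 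For the off-diagonal the paper does not set up Poisson or a large sieve from scratch but simply quotes \cite[Lemma~6.7]{zubrilina}, the pointwise bound $\sum_{Q<X}\mu(Q)^2\bigl(\tfrac{Q}{n}\bigr)\ll X^{3/5+\eps}n^{1/5+\eps}$; summing over $n<X$ gives $O(X^{4/5+\eps})$ for the $L$-value sum, and partial summation against $\sqrt{Q}$ then yields the exponent $13/10$. Your phrase ``optimally balancing'' leaves this derivation unspecified, and Heath--Brown's $L^2$ large sieve alone does not immediately give the pointwise estimate required---you would effectively be re-proving the cited lemma. The paper also points to Lavrik \cite{lavrik} for class-number moments in arithmetic progressions rather than unpacking the congruence by character orthogonality.
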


\begin{proof}
We may assume $Q \ell > 3$.  Then $H(-4 Q\ell) = h(-4Q \ell) + h(-Q\ell)$.  Thus the lemma amounts to estimating class number sums in congruence classes with a squarefree restriction on $Q$.  This is similar to classical class number averages, but now with a congruence condition.  Lavrik \cite{lavrik} has already determined class number moments over arithmetic progressions.  The sums of $h(-4Q\ell)$ and $h(-Q\ell)$ are similar; we just explain the proof for $h(-Q\ell)$.

For a discriminant $-D < -4$, we have $h(-D) = \frac{\sqrt D}{\pi} L(1,\chi_{-D})$, where $\chi_{-D}(n) = {-D \leg n}$.  
We have $L(1,\chi_{-D}) = \sum_{n < T} \frac{\chi_{-D}(n)}{n} + O(\sqrt D \log(D)/T)$ by Polya--Vinogradov and partial summation.  Then 
\[ \sideset{}{^*}\sum_{Q < X} L(1, \chi_{-Q\ell}) = \sum_{n^2 < X}\frac 1{n^2}  \sideset{}{^*}\sum_{Q < X} 1 + \sum_{\substack{n < X \\ n \not \in \square}} \frac 1 n \sideset{}{^*}\sum_{Q < X}  {\chi_{-Q\ell}(n)}  + O(\sqrt {\ell / X} \log (\ell X)), \]
where the sums over $Q$ are restricted to squarefree $Q$ coprime to $\ell$ such that $Q\ell \equiv a \mod m$ and $Q \ell \equiv 1 \mod 4$.  

The first double sum on the right is $c_1 X + O(\sqrt X)$ for some fixed $c_1 > 0$.
For the second sum, note that $\left| \sum^*_{Q < X} \chi_{-Q \ell}(n) \right | = \left| {-\ell \leg n} \sum {Q \leg n} \mu^2(Q) \right|$.  By \cite[Lemma 6.7]{zubrilina}, this is $O(Q^{3/5 + \eps} n^{1/5+\eps})$, which implies the second sum above is $O(X^{4/5 + \eps})$.  
This suffices for the corresponding class number sum estimate.
\end{proof}

By the lemma and remarks above, the $s=0$ contribution to \eqref{eq:AQF1-s0} is asymptotic to a finite linear combination of the form $\sum_{i \in \Z/8M \Z} c_i \sqrt{\ell/X}$.  This proves \cref{thm:WQ-murm}, as the hypothesis $x < \frac 1{4M} - \eps$ means that only the $s=0$ term from the trace formula contributes to \eqref{eq:AQF-type1} asymptotically.  

\subsection{Analysis for Type II} \label{sec:murmII}
Next suppose $(\mathcal N, \mathcal Q) = \{ (N, Q) \}$ is of Type II with $Q$ fixed and squarefree, and $N$ ranges over all squarefree numbers of the form $N = QM$.  As before, fix $k$ and let $\mathcal F$ be the family of weight $k$ newforms of some level $N \in \mathcal N$.
Then 
\begin{equation} \label{eq:AQF-type2}
A^{\mathcal Q}_{\mathcal F}(\ell,X; \beta) = \frac 1{\#\mathcal F(X,\beta X)} \ell^{1 - \frac k2} \sideset{}{'}\sum_{\frac XQ \le M \le \beta \frac XQ} 
\sqrt{M} \tr_{S^\new_k(QM)} T_\ell W_Q,
\end{equation}
where the prime on the sum means $M$ is restricted to squarefree numbers coprime to $\ell Q$.

Now we will analyze an analogue of \eqref{eq:AQF-type2} without weighting by the factor $\sqrt M$, and this will motivate its inclusion.

\begin{lem} \label{lem:AQF-untwd}
Let $Q \ge 1$ be squarefree.  As $\ell, X \to \infty$ with $\ell$ prime coprime $Q$, we have
\begin{equation} \label{eq:AQF-unwtd}
\ell^{\frac{1-k}2} \left| \sideset{}{'}\sum_{X < M < \beta X} \tr_{S_k^\new(QM)} T_\ell W_Q  \right| \ll  \ell^{\frac 65 + \eps} X^{\frac 35 + \eps}  + \delta_{k = 2} \, o(\ell X),
\end{equation}
where the sum is restricted to squarefree $M$ such that $(M, Q \ell) = 1$.
\end{lem}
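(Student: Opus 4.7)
My plan is to substitute the trace formula from \cref{prop:Tl-large} and estimate each piece separately. The trace decomposes as a main term $-\frac{\ell^{k/2-1}}2 \sum_{s^2 \le 4\ell/Q} U_{k-2}(\tfrac s 2 \sqrt{Q/\ell}) \, \xi_{\Delta_s}(M) \, H(\Delta_s)$ where $\Delta_s := s^2 Q^2 - 4Q\ell$, a boundary term $-\delta_{N=1}$, and a weight-$2$ remainder $\delta_{k=2}\mu(M)\sigma(\ell)$. The boundary contributes at most $O(1)$ in total (and nothing once $X > 1$). For the weight-$2$ remainder I would invoke the unconditional M\"obius-in-intervals estimate $\sum_{X < M < \beta X}^{\mathrm{sqf}} \mu(M) = o(X)$, so its contribution is bounded by $\ell^{-1/2}\sigma(\ell) \cdot o(X) = o(\sqrt\ell \, X) = o(\ell X)$, absorbed in the $\delta_{k=2} \, o(\ell X)$ term of the desired bound.

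For the main term I would swap the order of summation, pulling out the factors $H(\Delta_s)$ and $U_{k-2}(\tfrac s 2\sqrt{Q/\ell})$ which are independent of $M$. Uniformly on $[-1, 1]$ we have $|U_{k-2}| \le k - 1$, while the Dirichlet class number formula together with the trivial estimate $L(1, \chi_{\Delta_s}) \ll \log|\Delta_s|$ gives $H(\Delta_s) \ll |\Delta_s|^{1/2+\eps} \ll_Q \ell^{1/2+\eps}$. The problem reduces to bounding the inner character-type sum
\[ S(X; \Delta_s) := \sum_{\substack{X < M < \beta X \\ M \text{ squarefree}, \, (M, Q\ell) = 1}} \xi_{\Delta_s}(M) \]
uniformly in $\Delta_s$ (with $|\Delta_s| \le 4Q\ell$) and $X$.

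The key observation is that $\xi_{\Delta_s}(p) = \chi_{\Delta_s}(p) - 1$ for $p$ not dividing the conductor of $\chi_{\Delta_s}$, so on squarefree integers coprime to that conductor $\xi_{\Delta_s}$ agrees with the Dirichlet convolution $\mu * \chi_{\Delta_s}$ up to bounded correction at finitely many bad primes. Decomposing via M\"obius, splitting the resulting double sum by the hyperbola method, and applying a Burgess-type bound on incomplete quadratic character sums---the same technology underlying \cite[Lemma 6.7]{zubrilina}---should yield $S(X;\Delta_s) \ll X^{3/5+\eps}|\Delta_s|^{1/5+\eps}$. Collecting everything,
\[ \ell^{(1-k)/2} \cdot \ell^{k/2-1} \cdot \sqrt{\ell/Q} \cdot (Q\ell)^{1/2+\eps} \cdot X^{3/5+\eps}\ell^{1/5+\eps} \ll_Q \ell^{7/10+\eps} X^{3/5+\eps}, \]
comfortably within the claimed bound $\ell^{6/5+\eps} X^{3/5+\eps}$.

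The hard part will be making the Burgess-type estimate $S(X; \Delta_s) \ll X^{3/5+\eps}|\Delta_s|^{1/5+\eps}$ truly uniform in the discriminant, since $|\Delta_s|$ may be as large as $4Q\ell$ and is comparable to $X$ in the murmuration regime $\ell \asymp X$. Balancing the Burgess saving against the losses from the hyperbola split and from using M\"obius to impose the squarefree and coprimality conditions is the main technical step; with that estimate in hand, the rest is routine bookkeeping.
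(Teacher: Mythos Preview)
Your proposal is correct and follows essentially the same route as the paper: substitute the trace formula from \cref{prop:Tl-large}, dispose of the $\delta_{N=1}$ and $\delta_{k=2}$ terms via the trivial bound and the Mertens estimate respectively, then for each $s$ bound the Chebyshev factor by a constant, the class number by $\ell^{1/2+\eps}$, and the sum $\sum'_M \xi_{\Delta_s}(M)$ by $X^{3/5+\eps}\ell^{1/5+\eps}$ using \cite[Lemma 6.7]{zubrilina}, summing over the $O(\sqrt{\ell})$ values of $s$. Your bookkeeping is in fact slightly tighter than the paper's stated bound, since you keep track of the $\ell^{-1/2}$ coming from $\ell^{(1-k)/2}\cdot\ell^{k/2-1}$ and arrive at $\ell^{7/10+\eps}X^{3/5+\eps}$, comfortably inside the asserted $\ell^{6/5+\eps}X^{3/5+\eps}$.
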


\begin{proof} Note that the $\delta_{k=2}$ term for $\tr_{S_k^\new(QM)} W_Q T_\ell$ contributes $(\ell + 1)o(X)$ to the above sum of traces of $T_\ell W_Q$, using the fact the Mertens function $M(x) = \sum_{n \le x} \mu(n)$ is $o(X)$.  The $\delta_{N=1}$ term in $\tr_{S_k^\new(QM)} W_Q T_\ell$ can be ignored.

For a given $s$ such that $s^2 \le \frac{4 \ell}Q$, the contribution to the above sum is
\[ -\frac 12 U_{k-2} (\frac s2 \sqrt{\frac Q \ell})  H(\Delta) \sideset{}{'}\sum_{X < M < \beta X} \xi_\Delta(M), \]
where $\Delta = s^2 Q - 4 Q \ell$.  Note that $U_{k-2}(\frac s2 \sqrt{\frac Q \ell})$ is absolutely bounded since $U_{k-2}$ is a polynomial and the argument is absolutely bounded.  Since $\lvert \Delta \rvert = O(\ell)$, we have $H(\Delta) = O(\sqrt \ell \log \ell)$, and by \cite[Lemma 6.7]{zubrilina} the sum over $M$ is $O(X^{3/5 + \eps} \ell^{1/5 + \eps})$.  Summing up the $O(\sqrt \ell)$ terms now gives the asserted bound.
\end{proof}

\begin{cor} \label{cor:63}
Let $\mathcal F$ be the family of weight $k$ newforms with squarefree level, and fix $\beta > 1$.  As $\ell, X \to \infty$ such that $\frac \ell X \to x$ for some $x \in [0, \infty)$, the unweighted averages satisfy
\[ A_{\mathcal F}^+(\ell,X; \beta) + A_{\mathcal F}^-(\ell,X; \beta) \to 0. \]
\end{cor}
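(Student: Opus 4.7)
The plan is to express $A_{\mathcal F}^+ + A_{\mathcal F}^-$ in terms of the unweighted trace sum $T^+ + T^- = \ell^{1-k/2}\sideset{}{'}\sum_{X \le N \le \beta X} \tr_{S_k^\new(N)} T_\ell$, which is precisely the $Q=1$ case of the quantity bounded in \cref{lem:AQF-untwd}, and then to show this is asymptotically smaller than the total dimension $N = N^+ + N^-$.

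I would first record the purely algebraic identity that separates the two averages into a symmetric and an antisymmetric part. Setting $T^\pm = \sum_{f \in \mathcal F^\pm(X,\beta X)^{(\ell)}} \ell^{1-k/2} a_\ell(f)$, $N^\pm = \#\mathcal F^\pm(X,\beta X)^{(\ell)}$, and $D = N^+ - N^-$, direct manipulation gives
\[ A^+ + A^- = \frac{2(T^+ + T^-)}{N(1-(D/N)^2)} - \frac{2(T^+ - T^-)(D/N)}{N(1-(D/N)^2)}. \]
Next, I would bound the bias $|D|$. This quantity equals $\bigl|\sideset{}{'}\sum_N \tr_{S_k^\new(N)} W_N \bigr|$, which by class-number-type trace formulas---of the same flavor as those derived for $\Delta_k$ in \cref{sec:dim}, but at $Q = N$---is $\ll X^{3/2}$, giving $|D|/N \to 0$. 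Combined with the fact that $|T^+ - T^-|/N$ is bounded (trivially by Deligne, or sharply by Zubrilina's theorem on the root-number-weighted averages), the second summand above is $O(|D|/N) = o(1)$.

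Finally, I would apply \cref{lem:AQF-untwd} with $Q = 1$ to the main term. Together with the asymptotic $N \sim c_k(\beta^2-1) X^2$ from \cite[Section~3.4]{zubrilina} and the hypothesis $\ell = O(X)$, this bounds $(T^+ + T^-)/N$ by a power of $X$ with negative exponent, so the main term vanishes in the limit and thus $A^+ + A^- \to 0$. The principal point requiring care is to track the normalizations faithfully---the factor $\ell^{(1-k)/2}$ appearing in \cref{lem:AQF-untwd} differs from the $\ell^{1-k/2}$ in the definition of $A^\pm$ by $\sqrt \ell$---so that the resulting bound genuinely yields $(T^+ + T^-)/N \to 0$ throughout the full range $\ell/X \to x \in [0,\infty)$, not only for $\ell$ somewhat smaller than $X$.
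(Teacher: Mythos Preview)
Your approach is correct and is exactly what the paper intends: the corollary is placed immediately after \cref{lem:AQF-untwd} with no separate proof, so ``apply the lemma with $Q=1$ and divide by $\#\mathcal F(X,\beta X)$'' is the whole argument. You add the explicit handling of $N^+ \ne N^-$ via the bias $D$, which the paper leaves implicit.

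Two small points. First, Deligne only gives $|T^+ - T^-|/N \le 2\sqrt\ell$, which is \emph{not} bounded; combined with $D/N \ll X^{-1/2}$ the correction term is merely $O(\sqrt{\ell/X}) = O(\sqrt x)$, not $o(1)$. So you genuinely need the Zubrilina input (or a direct class-number-sum estimate for $\sum_N \tr T_\ell W_N$) for that piece. Second, your caution about the $\sqrt\ell$ normalization is exactly right and in fact essential: taking the stated exponent $\ell^{6/5+\eps} X^{3/5+\eps}$ in \cref{lem:AQF-untwd} at face value yields $(T^+ + T^-)/N \ll \ell^{17/10+\eps} X^{-7/5+\eps} \asymp X^{3/10+\eps}$, which diverges. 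The proof of the lemma actually carries an additional factor $\ell^{-1/2}$ (from $\ell^{(1-k)/2}\cdot\ell^{k/2-1}$) that is omitted in the displayed $s$-term contribution; with it the bound becomes $\ell^{7/10+\eps} X^{3/5+\eps}$, and then $(T^+ + T^-)/N \ll X^{-1/5+\eps} \to 0$ on the full range $\ell/X \to x$.
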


We actually expect more cancellation than in \cref{lem:AQF-untwd}.
If $\ell, X \to \infty$ such that $\frac \ell X \to x$, then left hand side of \eqref{eq:AQF-unwtd} divided by the number of weight $k$ newforms in that range appears to grow roughly like $\frac{\sqrt \ell}X$.  This suggests the $\sqrt M = \sqrt{N/Q}$ weighting in \eqref{eq:AQF-type2}.

\subsection{Analysis for Atkin--Lehner eigenspaces} \label{sec:eps-murm}
Now fix $m < r$, primes $p_1 <  \dots < p_m$, and let $\mathcal F, \mathcal N$ be as in  \cref{conj:eps-murm}.  Say $N = p_1 \dots p_r \in \mathcal N$ with $p_1 < \dots < p_r$, and let $\eps = (\eps_1, \dots, \eps_r) \in {\pm 1}^r$.  We view $\eps$ as the multiplicative function on divisors $N$ such that $\eps(p_i) = \eps_i$.  Then 
\[ \tr_{S_k^{\new}(N)^\eps} T_\ell = 2^{-r} \sum_{Q \mid N} \eps(Q) \tr_{S_k^\new(N)} T_\ell W_Q. \] 
(See \cite[Proposition 3.2]{me:refdim} for the case of $\ell = 1$, but the proof works for general $\ell$.)

For a subset $I \subset \{ 1, \dots, r \}$, denote by $Q_I = \prod_{i \in I} p_i$ the divisor of some $N \in \mathcal N$, and let $\mathcal Q_I$ the sequence of $Q_I$'s as $N$ ranges over $\mathcal N$.
Since $\dim S_k^{\new}(N)^\eps \approx 2^{-r} \dim S_k^{\new}(N) + O(1)$ by \cite[Corollary 3.4]{me:refdim}, one can approximate the averages for the Atkin--Lehner eigenspace
\[ A^\eps_{\mathcal F}(\ell,X; \beta) \approx 2^{-r} \sum_{I \subset \{ 1, \dots, r \} } \eps(Q_I) c_{I,X,\beta} A^{\mathcal Q_I}_{\mathcal F}(\ell,X; \beta), \quad \text{where } c_{I,X,\beta} =  \sideset{}{'}\sum_{X \le N \le \beta X}  \delta_{N \in \mathcal N} \sqrt{\frac{Q_I}N}. \]

Assuming \cref{conj:WQ-murm}, the terms on the right should only contribute in a limit if $c_{I,X} \not \to 0$ as $X \to \infty$.  Hence we expect 
a relation between the murmurations in \cref{conj:WQ-murm,conj:eps-murm} of the form
\begin{equation}
M^\eps_{\mathcal F}(x; \beta) = \sum_{\{1, \dots, m \} \subset I \subset \{ 1, \dots, r \} } \tilde c_{I,x,\beta} M_{\mathcal F}^{\mathcal Q_I}(x; \beta).
\end{equation}
This justifies the expectations in \cref{rem:murm}(2).


\subsection{Levels divisible by $\ell$} \label{sec:lmidN}
Here we indicate what happens if one includes levels $\ell \mid N$ (as is done in \cite{zubrilina}) in murmurations sums.  For simplicity, let us consider the averages $A^{\pm}_{\mathcal F}(\ell, X; \beta)$ introduced first in \cref{sec:intro-murm}.  For a form $f$ with level $N$ divisible by $\ell$, we have $\lvert \ell^{1-k/2} a_\ell(f) \rvert \le 1$.  Hence
\[ \sum_{\mathcal F^{\pm}(X,\beta X)} \ell^{1-k/2} a_\ell(f)
= \sum_{\mathcal F^{\pm}(X,\beta X)^{(\ell)}} \ell^{1-k/2} a_\ell(f) + O(\frac X \ell). \]
Assuming $\frac \ell X \to x$, the error term in this expression is $O(1)$ and will go to $0$ upon dividing by $\# \mathcal F^{\pm}(X,\beta X)^{(\ell)}$ or $\#\mathcal F^{\pm}(X,\beta X)$.  Since $\# \mathcal F^{\pm}(X,\beta X)^{(\ell)} \approx (1 - \frac 1 \ell) \#\mathcal F^{\pm}(X,\beta X)$, we see there is no asymptotic difference between working with averages over $ \mathcal F^{\pm}(X,\beta X)^{(\ell)}$ or $\mathcal F^{\pm}(X,\beta X)$.  


\section{Quadratic twists} \label{sec:quadtw}

Let $f \in S_k(N)$, and $\chi$ be a quadratic Dirichlet character of conductor $M$.  
From \cite[Proposition 3.1]{atkin-li}, one knows that $f \otimes \chi \in S_k(\lcm(N,M^2))$.
In particular, twisting by $\chi$ acts on eigenforms in $S_k(N)$ if $M^2 \mid N$.  
Note that if $v_p(N) > 2 v_p(M)$ for all $p \mid M$, then twisting by $\chi$ acts on newforms in $S_k(N)$: if $f \in S_k(N)$ is a newform, and $g = f \otimes \chi$ had smaller level $N'$, then necessarily $v_p(N') < v_p(N)$ for some $p \mid M$,
but then $f = g \otimes \chi$ would have level which is strictly smaller than $N$ at $p$.

Here we will examine when twisting by a quadratic character produces a bijection between newforms in $S^\new_k(N)^{+_q}$ and newforms in $S_k^\new(N)^{-_q}$.  
For simplicity we will restrict to the case that $v_p(N) > 2 v_p(M)$ for all $p \mid M$, which is generically necessary.  (If this is not satisfied, there will be some non-minimal forms where twisting by $\chi$ strictly lowers the level, except in the small parameter cases where all relevant lower level spaces are 0-dimensional.)

Say $\pi_q$ is the irreducible admissible representation of $\PGL_2(\Q_q)$ associated to a newform $f \in S_k(q^r M)$, where $(M,q) = 1$ and $r \ge 1$.  Then $r$ is the conductor of $\pi_q$.  If $\pi_q$ is supercuspidal, there are 3 distinct possibilities: (i) it is dihedrally induced from a ramified quadratic extension $E_q/\Q_q$; (ii)
it is dihedrally induced from the unramified quadratic extension of $\Q_q$;
or (iii) it is not dihedrally induced.  We respectively call these cases: (i) ramified supercuspidal; (ii) unramified supercuspidal; and (iii) exceptional supercuspidal.  The exceptional case only happens when $q=2$.

If $r=1$, then $\pi_q$ is an unramified twist of the Steinberg representation.  If $r=2$, then $\pi_q$ can be a ramified principal series, ramified twist of Steinberg, or unramified supercuspidal.  If $r \ge 3$ is odd, then $\pi_q$ is ramified supercuspidal or exceptional supercuspidal (the latter only happens when $q=2$ and $r=3, 7$). If $r \ge 4$ is even, either $\pi_q$ is a ramified principal series representation, unramified supercuspidal, or exceptional supercuspidal (the latter only occurs when $q=2$ and $r = 4, 6$).

For a quadratic Dirichlet character $\chi$, denote by $\kappa(\pi_q, \chi)$ the change in the $W_q$-eigenvalue of $f$ upon twisting by $\chi$, i.e., the ratio of the $W_q$-eigenvalues of $f$ and $f \otimes \chi$.  This only depends on $\pi_q$, and the calculation of $\kappa(\pi_q, \chi)$ is given in \cite{pacetti} (see also \cite{AL,atkin-li} for a more classical perspective in special cases).

Since any quadratic $\chi$ is a product of quadratic characters of prime-power conductor, we may reduce to the case of twisting by characters ramified at a single finite prime $p$.
For an odd prime $p$, let $\chi_p$ denote the quadratic character of conductor $p$, which corresponds to the quadratic extension $\Q(\sqrt{p^*})$ where $p^* = {-1 \leg p} p$.   That is, $\chi_p(n) = {p^* \leg n}$.  For $j \in \{ -1, \pm 2 \}$, let $\chi_j$ be the quadratic character associated to $\Q(\sqrt j)$.  Then $\chi_{-1}$ has conductor 4 and $\chi_{\pm 2}$ has conductor 8.  

\subsection{Twisting at $q$}
First we state $\kappa(\pi_q, \chi_q)$ when $q$ is odd.  Since we are interested in the case where twisting by $\chi_q$ acts on the newforms in $S_k(q^r M)$, we may assume the conductor of $\pi_q$ is $r \ge 3$.

If $\pi_q$ is a ramified principal series, then $\kappa(\pi_q, \chi_q) = {-1 \leg q}$.

If $\pi_q$ is an unramified supercuspidal (so $r$ is even), then $\kappa(\pi_q, \chi_q) = -{-1 \leg q}$.

If $\pi_q$ is a ramified supercuspidal (so $r$ is odd) induced from $E_q/\Q_q$, then $\kappa(\pi_q, \chi_q) = \pm 1)$, where the sign is $+1$ if $E_q = \Q_q(\sqrt {q^*})$ and $-1$ if  $E_p = \Q_p(\sqrt {-q^*})$.

Thus for any $q$ odd and $r \ge 3$, twisting by $\chi_q$ never flips the Atkin--Lehner sign of every kind of representation $\pi_q$ of conductor $r$.  In particular, twisting by $\chi_q$ does not force $\Delta_k(q^r, M) = 0$ (at least assuming that $\dim S_k^\new(q^r M)$ is sufficiently large so all possible local representations occur). 

When $q=2$, the situation is similar.  If $\chi \in \{ \chi_{-1}, \chi_{\pm 2} \}$, 
and $r \ge 5$, one may see from the calculations of $\kappa(\pi_q, \chi)$ in \cite[Theorem 4.2]{pacetti} that twisting by $\chi$ will not flip the Atkin--Lehner sign of each kind of representation $\PGL(\Q_2)$ of conductor $r$.

\subsection{Twisting away from $q$}
Next we consider twisting by a quadratic character ramified only at a prime $p \ne q$.

First suppose $p$ is odd and $p \ne q$.  Then $\kappa(\pi_q, \chi_p) = {q \leg p}^{r}$ for any $\pi_q$ of conductor $r$.

Next let $\chi \in \{ \chi_{-1}, \chi_{\pm 2} \}$.  Then for $q$ odd, we have $\kappa(\pi_q, \chi) = \chi(q)^r$ for any $\pi_q$ of conductor $r$.  In particular, if $r$ is odd then $\kappa(\pi_q, \chi_{-1}) = -1$ if $q \equiv 3 \mod 4$ and $\kappa(\pi_q, \chi_{-1}) = -1$ if $q \equiv 5 \mod 8$.

\begin{prop} \label{prop:quadtwist}
Suppose $N = q^r M$, with $r$ odd, $(q,M) = 1$, and one of the following holds:
\begin{enumerate}
\item there exists an odd $p$ such that $p^3 \mid N$ and ${q \leg p} = -1$;
\item $2^5 \mid N$ and $q \equiv 3 \mod 4$; 
\item $2^7 \mid N$ and $q \equiv 5 \mod 8$.
\end{enumerate}
Then $f \mapsto f \otimes \chi$ defines a bijection of newforms in $S^\new_k(N)^{+_q}$ with $S^\new_k(N)^{-_q}$, where we can take $\chi = \chi_p$ in case (1), $\chi = \chi_{-1}$ in case (2), $\chi = \chi_{\pm 2}$ in case (3).
\end{prop}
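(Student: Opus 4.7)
The plan is to apply the quadratic-twist machinery assembled in the opening of the section together with the explicit formulas for $\kappa(\pi_q, \chi)$ collected just above the proposition. In each of the three cases, the character $\chi$ has conductor $M_\chi \in \{ p, 4, 8 \}$, and the divisibility hypothesis ($p^3 \mid N$ in case (1), $2^5 \mid N$ in case (2), $2^7 \mid N$ in case (3)) is precisely $v_\ell(N) > 2 v_\ell(M_\chi)$ for every prime $\ell \mid M_\chi$. This is exactly the condition (from Atkin--Li and the opening discussion) ensuring that $f \mapsto f \otimes \chi$ sends newforms in $S_k^\new(N)$ to newforms in $S_k^\new(N)$, rather than dropping to a lower level. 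Since $\chi^2 = 1$, twisting is an involution on $S_k^\new(N)$, hence already a bijection of that space to itself.

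Next I would read off the effect of twisting on the local $W_q$-eigenvalue from the formulas in the discussion of ``twisting away from $q$''. In case (1), $\chi = \chi_p$ with $p$ odd and $p \neq q$, so $\kappa(\pi_q, \chi_p) = {q \leg p}^r$; the assumption ${q \leg p} = -1$ together with $r$ odd gives $\kappa(\pi_q, \chi_p) = -1$. In case (2), $q$ is odd and $\kappa(\pi_q, \chi_{-1}) = \chi_{-1}(q)^r$; $q \equiv 3 \bmod 4$ forces $\chi_{-1}(q) = -1$, and $r$ odd yields $-1$. In case (3), $q \equiv 5 \bmod 8$ satisfies $\chi_2(q) = {2 \leg q} = -1$ and $\chi_{-2}(q) = {-1 \leg q}{2 \leg q} = -1$, so for either choice of $\chi_{\pm 2}$ the ratio is $\kappa(\pi_q, \chi_{\pm 2}) = \chi_{\pm 2}(q)^r = -1$. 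Crucially, the cited formulas only depend on $\pi_q$ through its conductor $r$ and not on the specific type (principal series, Steinberg, supercuspidal, etc.), so the sign-flip is uniform across all newforms $f \in S_k^\new(N)$.

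Combining the two steps, $f \mapsto f \otimes \chi$ is an involutive self-bijection of $S_k^\new(N)$ that inverts the $W_q$-eigenvalue of every newform, hence restricts to a bijection $S_k^\new(N)^{+_q} \to S_k^\new(N)^{-_q}$. The only substantive point beyond quoting the $\kappa$-formulas is confirming that the strict inequality $v_\ell(N) > 2 v_\ell(M_\chi)$ truly precludes the twist from landing in a smaller newspace; this is dispatched by the short contradiction argument sketched in the opening paragraph (if $g = f \otimes \chi$ had a smaller level at some $\ell \mid M_\chi$, then $f = g \otimes \chi$ would also have strictly smaller level at $\ell$). The main potential pitfall is bookkeeping the quadratic residue computations in case (3), but these are elementary.
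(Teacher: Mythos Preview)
Your proposal is correct and follows exactly the approach the paper takes: the proposition is stated immediately after the $\kappa(\pi_q,\chi)$ formulas in the ``twisting away from $q$'' subsection, and the paper gives no separate proof because the result is meant to follow directly from those formulas together with the level-preservation criterion $v_\ell(N) > 2v_\ell(M_\chi)$ discussed at the start of the section. You have simply written out explicitly the verifications the paper leaves implicit.
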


Note that when the hypotheses of this proposition hold, one also gets that 
$\tr_{S_k^\new(N)} T_\ell W_q = 0$ for $\ell$ such that $\chi(\ell) = 1$.  Moreover, since $f \equiv f \otimes \chi \mod 2$, each newform in $S_k^\new(N)^{+_q}$ is congruent mod 2 to a newform in $S_k^\new(N)^{-_q}$, and vice versa.


\appendix \section{Errata for ``Rank bias for elliptic curves mod $p$'' by Kimball Martin and Thomas Pharis} \label{appendix}

Here we correct a sign error when $k \equiv 0 \mod 4$ in Section 2 of the published article \cite{me:pharis}.  This has no effect on the rest of the paper.

The following corrections should be made to \cite{me:pharis}:

\begin{enumerate} 
\item p.\ 710, bottom (Section 1A): the phrase ``however the
signs for $k \equiv 0 \mod 4$ are opposite to those for $k \equiv 2 \mod 4$'' should be removed.

\item p.\ 717: The conclusion of Proposition 2.2 should read
\[ \left| \tr_{S_k^\new(N)^\pm} T_n \mp \frac 14 n^{\frac{k-2}2} H(4nN) \right| < 
\left( 2^{\omega(N)} (4n)^{\frac k2} + \delta_{k,2} \right)\sigma_1 (n). \]

\item p.\ 717, proof of Proposition 2.2: $p_k(0,n) = (-n)^{(k-2)/2}$,
not $n^{(k-2)/2}$, so (2-2) should read
\begin{equation} \label{eq:SZ}
 \tr_{S_k(N)} T_n W_N = - \frac 12 (-n)^{\frac{k-2}2} H(4nN) + \delta_{k,2} \sigma_1(n).
 \end{equation}
 Corresponding sign changes should be made throughout of 
 proofs of Proposition 2.2 and Corollary 2.3.

\item p.\ 717: The conclusion of Proposition 2.2 should read
\[ \left| \tr_{S_k^\new(N)^\pm} T_n \mp \frac 14 n^{\frac{k-2}2} H(4nN) \right| < 
\left( 2^{\omega(N)} (4n)^{\frac k2} + \delta_{k,2} \right)\sigma_1(n). \]

\item p.\ 718: The conclusion of Corollary 2.3 should read
\[ N^{\frac 12 - \epsilon} \ll  \pm \tr_{S_k^\new(N)^{\pm}} T_n \ll N^{\frac 12} \log N. \]
 
 \item p.\ 718, bottom: the phrase ``when $k \equiv 2 \mod 4$, and approximately like $\mp \sqrt N$
when $k \equiv 0 \mod 4$'' should be removed.

\end{enumerate}


%
%

\begin{bibdiv}
\begin{biblist}

\bib{assaf}{article}{
      title={A note on the trace formula}, 
      author={Assaf, Eran},
      eprint={https://arxiv.org/abs/2311.03523}
}

\bib{AL}{article}{
   author={Atkin, A. O. L.},
   author={Lehner, J.},
   title={Hecke operators on $\Gamma _{0}(m)$},
   journal={Math. Ann.},
   volume={185},
   date={1970},
   pages={134--160},
   issn={0025-5831},
}

\bib{atkin-li}{article}{
   author={Atkin, A. O. L.},
   author={Li, Wen Ch'ing Winnie},
   title={Twists of newforms and pseudo-eigenvalues of $W$-operators},
   journal={Invent. Math.},
   volume={48},
   date={1978},
   number={3},
   pages={221--243},
   issn={0020-9910},
}

\bib{BP}{article}{
      title={New phenomena arising from L-invariants of modular forms}, 
      author={Bergdall, John},
      author={Pollack, Robert},
      eprint={https://arxiv.org/abs/2407.17411}
}

\bib{HLOP}{article}{
      title={Murmurations of elliptic curves}, 
      author={He, Yang-Hui},
      author={Lee, Kyu-Hwan},
      author={Oliver, Thomas},
      author={Pozdnyakov, Alexey},
      note = {Exp. Math., to appear},
      eprint={https://arxiv.org/abs/2204.10140},
}

\bib{lavrik}{article}{
   author={Lavrik, A. F.},
   title={The moments of the number of classes of primitive quadratic forms
   of negative determinant},
   journal={Dokl. Akad. Nauk SSSR},
   volume={197},
   date={1971},
   pages={32--35},
   issn={0002-3264},
}

\bib{martin}{article}{
   author={Martin, Greg},
   title={Dimensions of the spaces of cusp forms and newforms on $\Gamma_0(N)$ and $\Gamma_1(N)$},
   journal={J. Number Theory},
   volume={112},
   date={2005},
   number={2},
   pages={298--331},
   issn={0022-314X},
}

\bib{me:refdim}{article}{
   author={Martin, Kimball},
   title={Refined dimensions of cusp forms, and equidistribution and bias of
   signs},
   journal={J. Number Theory},
   volume={188},
   date={2018},
   pages={1--17},
   issn={0022-314X},
}

\bib{me:cong2}{article}{
   author={Martin, Kimball},
   title={Congruences for modular forms ${\rm mod}\,2$ and quaternionic
   $S$-ideal classes},
   journal={Canad. J. Math.},
   volume={70},
   date={2018},
   number={5},
   pages={1076--1095},
   issn={0008-414X},
}

\bib{me:rootno}{article}{
   author={Martin, Kimball},
   title={Root number bias for newforms},
   journal={Proc. Amer. Math. Soc.},
   volume={151},
   date={2023},
   number={9},
   pages={3721--3736},
   issn={0002-9939},
}

\bib{me:pharis}{article}{
   author={Martin, Kimball},
   author={Pharis, Thomas},
   title={Rank bias for elliptic curves mod $p$},
   journal={Involve},
   volume={15},
   date={2022},
   number={4},
   pages={709--726},
   issn={1944-4176},
}

\bib{murty-sinha}{article}{
   author={Murty, M. Ram},
   author={Sinha, Kaneenika},
   title={Factoring newparts of Jacobians of certain modular curves},
   journal={Proc. Amer. Math. Soc.},
   volume={138},
   date={2010},
   number={10},
   pages={3481--3494},
   issn={0002-9939},
   review={\MR{2661548}},
   doi={10.1090/S0002-9939-10-10376-1},
}

\bib{pacetti}{article}{
   author={Pacetti, Ariel},
   title={On the change of root numbers under twisting and applications},
   journal={Proc. Amer. Math. Soc.},
   volume={141},
   date={2013},
   number={8},
   pages={2615--2628},
   issn={0002-9939},
}

\bib{sage}{manual}{
      author={Developers, The~Sage},
       title={{S}agemath, the {S}age {M}athematics {S}oftware {S}ystem
  ({V}ersion 10.0)},
        date={2024},
        label={Sage},
        note={{\tt https://www.sagemath.org}},
}

\bib{SZ}{article}{
   author={Skoruppa, Nils-Peter},
   author={Zagier, Don},
   title={Jacobi forms and a certain space of modular forms},
   journal={Invent. Math.},
   volume={94},
   date={1988},
   number={1},
   pages={113--146},
   issn={0020-9910},
}

\bib{zubrilina}{article}{
      title={Murmurations}, 
      author={Zubrilina, Nina},
      eprint={https://arxiv.org/abs/2310.07681}
}

\end{biblist}
\end{bibdiv}

\end{document}